\newif\ifmp
\newcommand{\R}{\mathbb{R}}
\newcommand{\Z}{\mathbb{Z}}
\newcommand{\F}{\mathcal{F}}
\newcommand{\G}{\mathcal{G}}
\newcommand{\J}{\mathcal{J}}
\newcommand{\I}{\mathcal{I}}
\newcommand{\V}{\mathcal{V}}
\newcommand{\M}{\mathcal{M}}
\newcommand{\ones}{\mathbbm{1}}
\newcommand{\zeros}{0}
\newcommand{\supp}{\textrm{supp}}
\newcommand{\usupp}{\textrm{usupp}}
\newcommand{\zero}{{\sf zero}}
\newcommand{\Gp}{G^{\textrm{pack}}_{A, \J}}
\newcommand{\Gc}{G^{\textrm{cover}}_{A, \I}}
\newcommand{\Pp}{P^{\mathcal{V},P}}
\newcommand{\proj}{\mathrm{proj}}
\newcommand{\remove}[1]{}
\newcommand{\bigmid}{\,\middle|\,}
\newcommand{\Mod}[1]{\ (\text{mod}\ #1)}
\newcommand{\high}{{\sf high}}
\newcommand{\miss}{{\sf miss}}
\DeclareMathOperator{\argmax}{argmax}
\DeclareMathOperator{\argmin}{argmin}
	\newtheorem{proposition}{Proposition}
	\newtheorem{definition}[proposition]{Definition}
	\newtheorem{example}[proposition]{Example}
	\newtheorem{lemma}[proposition]{Lemma}
	\newtheorem{theorem}[proposition]{Theorem}
	\newtheorem{corollary}[proposition]{Corollary}
\newtheorem{observation}[proposition]{Observation}
\newcounter{mynotes}
\newcommand{\myqed}{
\ifmp
	\hfill \qed
\fi
}
\begin{document}
\ifmp
	\title{Analysis of Sparse Cutting-planes for Sparse MILPs with Applications to Stochastic MILPs}
	\titlerunning{Analysis of Sparse Cutting-plane for Sparse MILPs} 
	
	\author{Santanu S. Dey \and Marco Molinaro \and Qianyi Wang}

	\institute{Santanu S. Dey \at School of Industrial and Systems Engineering, Georgia Institute of Technology\\
             \email{santanu.dey@isye.gatech.edu}           
             \and
             Marco Molinaro \at Computer Science Department, PUC-Rio, Brazil\\
             \email{molinaro.marco@gmail.com}
             \and
             Qianyi Wang\at School of Industrial and Systems Engineering, Georgia Institute of Technology\\
             \email{qwang32@gatech.edu}             
}

	\date{Received: date / Accepted: date}

\else
	\title{Analysis of Sparse Cutting-planes for Sparse MILPs with Applications to Stochastic MILPs}
	\author[1]{Santanu S. Dey\thanks{santanu.dey@isye.gatech.edu}}
	\author[2]{Marco Molinaro\thanks{molinaro.marco@gmail.edu}}
	\author[1]{Qianyi Wang\thanks{qwang32@gatech.edu}}
	\affil[1]{School of Industrial and Systems Engineering, Georgia Institute of Technology, Atlanta, GA 30332, United States}
	\affil[2]{Computer Science Department, PUC-Rio, Brazil}
\fi

\maketitle

\begin{abstract}
In this paper, we present an analysis of the strength of sparse cutting-planes for mixed integer linear programs (MILP) with sparse formulations.  We examine three kinds of problems: packing problems, covering problems, and more general MILPs with the only assumption that the objective function is non-negative. Given a MILP instance of one of these three types, assume that we decide on the support of cutting-planes to be used and the strongest inequalities on these supports are added to the linear programming relaxation. Call the optimal objective function value of the linear programming relaxation together with these cuts as $z^{cut}$.  We present bounds on the ratio of $z^{cut}$ and the optimal objective function value of the MILP that depends only on the sparsity structure of the constraint matrix and the support of sparse cuts selected, that is, these bounds are completely data independent. These results also shed light on the strength of scenario-specific cuts for two stage stochastic MILPs.

\ifmp
	\keywords{Cutting-Planes\and Mixed Integer Linear Programming \and Sparsity}
\fi

\end{abstract}

\section {Introduction}
\subsection{Motivation and goal}
Cutting-plane technology has become one of the main pillars in the edifice that is a modern state-of-the-art mixed integer linear programming (MILP) solver. Enormous theoretical advances have been made in designing many new families of cutting-planes for general MILPs (see for example, the review papers - \cite{marchand:ma:we:wo:2002,RichardDey}). The use of some of these cutting-planes has brought significant speedups in state-of-the-art MILP solvers~\cite{bixby2004,Lodi2009}.

While significant progress has been made in developing various families of cutting-planes, lesser understanding has been obtained on the question of cutting-plane selection from a theoretical perspective. Empirically, sparsity of cutting-planes is considered an important determinant in cutting-plane selection. In a recent paper~\cite{deyMolinaroWang:2015}, we presented a geometric analysis of quality of {sparse cutting-planes} as a function of the number of vertices of the integer hull, the dimension of the polytope and the level of sparsity. 

In this paper, we continue to pursue the question of understanding the strength of sparse cutting-planes using completely different techniques, so that we are also able to incorporate the information that most real-life integer programming formulations have sparse constraint matrices. Moreover, the worst-case analysis we present in this paper depends on parameters that can be determined more easily than the number of vertices of the integer hull as in~\cite{deyMolinaroWang:2015}. 	

In the following paragraphs, we discuss the main aspects of the research direction we consider in this paper, namely: (i) The fact that solvers prefer using sparse cutting-planes, (ii) the assumption that real-life integer programs have sparse constraint matrix and (iii) why the strength of sparse cutting-planes may depend on the sparsity of the constraint matrix of the IP formulation.

What is the reason for state-of-the-art solvers to bias the selection of cutting-planes towards sparser cuts? Solving a MILP involves solving many linear programs (LP) -- one at each node of the tree, and the number of nodes can easily be exponential in dimension. Because linear programming solvers can use various linear algebra routines that are able to take advantage of sparse matrices, adding dense cuts could significantly slow down the solver. In a very revealing study~\cite{Walter14}, the authors conducted the following experiment: They added a very dense valid equality constraint to other constraints in the LP relaxation at each node while solving IP instances from MIPLIB using CPLEX. This does not change the underlying polyhedron at each node, but makes the constraints dense. They observed approximately $25\%$ increase in time to solve the instances if just $9$ constraints were made artificially dense!

Is it reasonable to say that real-life integer programs have sparse constraint matrix? While this is definitely debatable (and surely ``counter examples" to this statement can be provided), consider the following statistic: the average number of non-zero entries in the constraint matrix of the instances in the MIPLIB 2010 library is $1.63\%$ and the median is $0.17\%$ (this is excluding the non-negativity or upper bound constraints). Indeed, in our limited experience, we have never seen formulations of MILPs where the matrix is very dense, for example all the variables appearing in all the constraints. Therefore, it would be fair to say that a large number of real-life MILPs will be captured by an analysis that considers only sparse constraint matrices. We  formalize later in the paper how sparsity is measured for our purposes.

Finally, why should we expect that the strength of sparse cutting-planes to be related to the sparsity of the constraint matrix of the MILP formulation? To build some intuition, consider the feasible region of the following MILP:
\begin{eqnarray*}
\begin{array}{llcl}
A^1x^1& &\leq &b^1\\
&A^2x^2&\leq& b^2\\
x^1\in \mathbb{Z}^{p_1}\times \mathbb{R}^{q_1},&x^2\in \mathbb{Z}^{p_2}\times \mathbb{R}^{q_2}&\end{array}
\end{eqnarray*}
Since the constraints are completely disjoint in the $x^1$ and $x^2$ variables, the convex hull is obtained by adding valid inequalities in the support of the first $p_1 + q_1$ variables and another set of valid inequalities for the second $p_2 + q_2$ variables. Therefore, sparse cutting-planes, in the sense that their support is not on all the variables, is sufficient to obtain the convex hull. Now one would like to extend such a observation even if the constraints are not entirely decomposable, but ``loosely decomposable". Indeed this is the hypothesis that is mentioned in the classical computational paper~\cite{crowder:jo:pa:1983}. This paper solves fairly large scale 0-1 integer programs (up to a few thousand variables) within an hour in the early 1980s, using various preprocessing techniques and the lifted knapsack cover cutting-planes within a cut-and-branch scheme. To quote from this paper:
\begin{quote}``All problems are characterized by {sparse} constraint matrix with rational data."
\end{quote}
\begin{quote}
``We note that the {support} of an inequality obtained by lifting (2.7) or (2.9) is contained in the support of the inequality (2.5) ... Therefore, the inequalities that we generate preserve the sparsity of the constraint matrix."
\end{quote}
Since the constraints matrices are sparse, most of the cuts that are used in this paper are sparse. Indeed, one way to view the results we obtain in this paper is to attempt a mathematical explanation for the empirical observations of quality of sparse cutting-planes obtained in~\cite{crowder:jo:pa:1983}. Finally, we mention here in passing that the quality of Gomory mixed integer cuts were found empirically to be related to the sparsity of LP relaxation optimal tableaux in the paper~\cite{dey:lo:wo:tr:2014}; however we do not explore particular families of sparse cutting-planes in this paper. 

\subsection{The nature of results obtained in this paper}
We examine three kinds of MILPs: packing MILPs, covering MILPs, and a more general form of MILPs where the feasible region is arbitrary together with assumptions guaranteeing that the objective function value is non-negative. For each of these problems we do the following:
\begin{enumerate}
\item We first present a method to describe the sparsity structure of the constraint matrix.
\item Then we present a method to describe a hierarchy of cutting-planes from very sparse to completely dense. The method for describing the sparsity of the constraint matrix and that for the cuts added are closely related.
\item For a given MILP instance, we assume that once the sparsity structure of the cutting-planes (i.e. the support of the cutting-planes are decided), the strongest (or equivalently all) valid inequalities on these supports are added to the linear programming relaxation and the resulting LP is solved. Call the optimal objective function value of this LP as $z^{cut}$. 
\item All our results are of the following kind: We present bounds on the ratio of $z^{cut}$ and the optimal objective function value of the IP (call this $z^I$), where the bound depends only on the sparsity structure of the constraint matrix and the support of sparse cuts. 
\end{enumerate}
For example, in the packing case, since objective function is of the maximization type, we present an upper bound on $\frac{z^{cut}}{z^I}$ which, we emphasize again, depends entirely on the location of zeros in the constraint matrix and the cuts added and is independent of the actual data of the instance. We note here that the method to describe the sparsity of the matrix and cutting-planes are different for the different types of problems.

We are also able to present examples in the case of all the three types of problems, that show that the bounds we obtain are tight. 

Though out this paper we will constantly refer back to the deterministic equivalent of a two-stage stochastic problem with finitely many realizations of uncertain parameters in the second stage. Such MILPs have naturally sparse formulations. Moreover, sparse cutting-planes, the so-called scenario-specific cuts (or the path inequalities), for such MILPs have been well studied. (See details in Section~\ref{sec:main}). Therefore, any result we obtain for quality of sparse cutting-planes for sparse IPs is applicable is this setting, and this connection allows us to shed some light on the performance of scenario-specific cuts for stochastic MILPs. 

We also conduct computational experiments for all these classes of MILPs to study the effectiveness of sparse cutting-planes. Our main observation is the sparse cuts usually perform much better than the worst-case bounds we obtain theoretically. 

Outline the paper: We present all the definitions (of how sparsity is measured, etc.) and the main theoretical results in Section \ref{sec:main}. Then in Section \ref{sec:computational} we present results from a empirical study of the same questions. We make concluding remarks in Section \ref{sec:conclusion}. Section \ref{sec:proofs} provides proofs of all the results presented in Section \ref{sec:main}.

\section{Main results}\label{sec:main}

\subsection {Notation and basic definitions} \label{sec:def}
Given a feasible region of a mixed integer linear program, say $P$, we denote the convex hull of $P$ by $P^I$ and denote the feasible region of the linear programming relaxation by $P^{LP}$. 

For any natural number $n$, we denote the set $\{1, \dots, n\}$ by $[n]$. Given a set $V$, $2^{V}$ is used to represent its power set.

\begin{definition}[Sparse cut on $N$]
Given the feasible region of a mixed integer linear program ($P$) with $n$ variables, and a subset of indices $N \subseteq [n]$, we call $\alpha^Tx \leq \beta$ \emph{a sparse cut on $N$} if it is a valid inequality for $P^I$ and the support of $\alpha$ is restricted to variables with index in $N$, that is $\left\{i \in [n]\,|\,\alpha_i \neq 0 \right\} \subseteq N$.
\end{definition}
Clarification of the above definition: If $\alpha^T x \leq \beta$ is a sparse cut on $N$, then $\alpha_i = 0$ for all $i \in [n]\setminus N$, while $\alpha_i$ may also be equal to $0$ for some $i \in N$.

Since we are interested in knowing how good of an approximation of $P^I$ is obtained by the addition of all sparse cutting-planes to the linear programming relaxation, we will study the set defined next.	

\begin{definition}[Sparse closure on $N$]\label{defn:sparseclosure}
Given a feasible region of a mixed integer linear program ($P$) with $n$ variables and $N \subseteq [n]$, we define the {sparse closure on $N$}, denoted as $P^{(N)}$, and defined as
$$P^{(N)} :=  P^{LP} \cap \bigcap_{\left\{(\alpha, \beta)\,|\, \alpha x
\leq \beta \textup{ is a sparse cut on N } \right \} } \left\{x\,|\, \alpha x \leq \beta  \right\}.$$
\end{definition}


\subsection{Packing problems}\label{results:packing}
In this section, we present our results on the quality of sparse cutting-planes for packing-type problems, that is problems of the following form:
\begin{eqnarray}
\textup{(P)}~~~\textup{max}&& c^Tx \notag \\
s.t. &&Ax \leq b \notag \\
&&x_j \in \mathbb{Z}_+, \forall j \in \mathcal{L}  \notag \\
&&x_j \in \mathbb{R}_+, \forall j \in [n]\backslash \mathcal{L}, \notag
\end{eqnarray}
with $A \in \mathbb{Q}_{+}^{m \times n}$, $b\in \mathbb{Q}_+^m$, $c \in \mathbb{Q}_+^{n}$ and $\mathcal{L} \subseteq [n]$.

In order to analyze the quality of sparse cutting-planes for packing problems we will \emph{partition the variables into blocks}. One way to think about this partition is that it allows us to understand the {global} effect of interactions between blocks of ``similar variables". For example, in MIPLIB instances, one can possibly rearrange the rows and columns~\cite{BorndorferFM98,BergnerIPCO11,WangR13,AcerKA13} so that one sees patterns of blocks of variables in the constraint matrices.  See Figure \ref{fig1}(a) for an illustration of ``observing patterns" in a sparse matrix. Moreover note that in what follows one can always define the blocks to be singletons, that is each block is just a single variable. 

The next example illustrates an important class of problems where such partitioning of variables is natural. 

\begin{example}[Two-stage stochastic problem]\label{ex:2stage}
The deterministic equivalent of a two-stage stochastic problem with finitely many realizations of uncertain parameters in the second stage has the following form:
	\begin{align*}
		\max ~~~~& c^T y + \sum_{i = 1}^k (d^i)^T z^i\\
	  s.t. ~~~~& A y \le b\\
	    	     & A^i y + B^i z^i \le b^i ~~~~~~~~\forall i \in [k],
	\end{align*}
	where $y$ are the first stage variables and the $z^i$ variables corresponding to each realization in the second stage. Notice there are two types of constraints:
\begin{enumerate}
\item Constraints involving only the first stage variables.
\item Constraints involving the first stage variables and second stage variables corresponding to one particular realization of uncertain parameters.
\end{enumerate}
Note that there are no constraints in the formulation that involve variables corresponding to two different realizations of uncertain parameters. 

It is natural to put all the first stage variables $y$ into one block and each of the second stage variables $z^i$ corresponding to one realization of uncertain parameters into a separate block of variables. 
\end{example} 


To formalize the effect of the interactions between blocks of variables we define a graph that we call as the packing interaction graph. This graph will play an instrumental role in analyzing the strength of sparse cutting-planes.

\begin{definition}[Packing interaction graph of A]\label{defn:packgraph}
Consider a matrix $A \in \mathbb{Q}^{m \times n}$. Let $\mathcal{J}:= \{J_1, J_2 \dots, J_q\}$ be a partition of the index set of columns of $A$ (that is $[n]$). We define the \emph{packing interaction graph} $\Gp = (V,E)$ as follows: 
\begin{enumerate}
\item There is one node $v_j \in V$ for every part $J_j \in \mathcal{J}$. 
\item For all $v_i, v_j \in V$, there is an edge $(v_i, v_j) \in E$ if and only if there is a row in $A$ with non-zero entries in both parts $J_i$ and $J_j$, namely there are $k \in [m]$, $u \in J_i$ and $w \in J_j$ such that $A_{ku} \neq 0$ and $A_{kw} \neq 0$.
\end{enumerate}
\end{definition}

Notice that $\Gp$ captures the sparsity pattern of the matrix $A$ up to partition $\mathcal{J}$ of columns of the matrix, i.e., this graph ignores the sparsity (or the lack of it) within each of the blocks of columns, but captures the sparsity (or the lack of it) between the blocks of the column. Finally note that if each of the blocks in $\mathcal{J}$ were singletons, then the resulting graph is the intersection graph~\cite{fulkerson:gross:1965}.

Figure \ref{fig1} illustrate the process of constructing $\Gp$. Figure \ref{fig1}(a) shows a matrix $A$, where the columns are partitioned into six variable blocks, the unshaded boxes correspond to zeros in $A$ and the shaded boxes correspond to entries in $A$ that are non-zero. Figure \ref{fig1}(b) shows $\Gp$.  

\begin{figure}[t!]
	\centering    
	\caption{Constructing $\Gp$.
	}  
	\label{fig1}
	\subfigure[The matrix $A$ with column partitions: Shaded boxes have non-zero entries.]{\label{fig:a}\includegraphics[width=50mm]{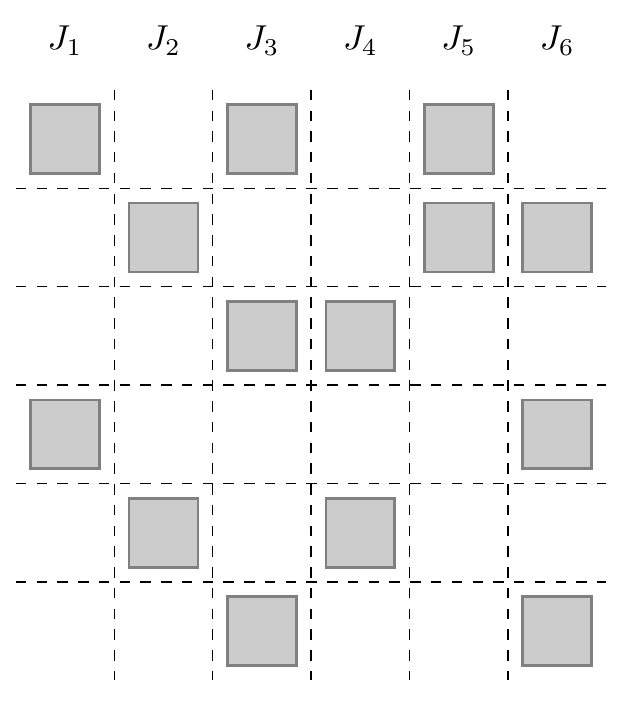}}
	\hspace{15pt}\subfigure[The resulting graph.]{\label{fig:b}\includegraphics[width=50mm]{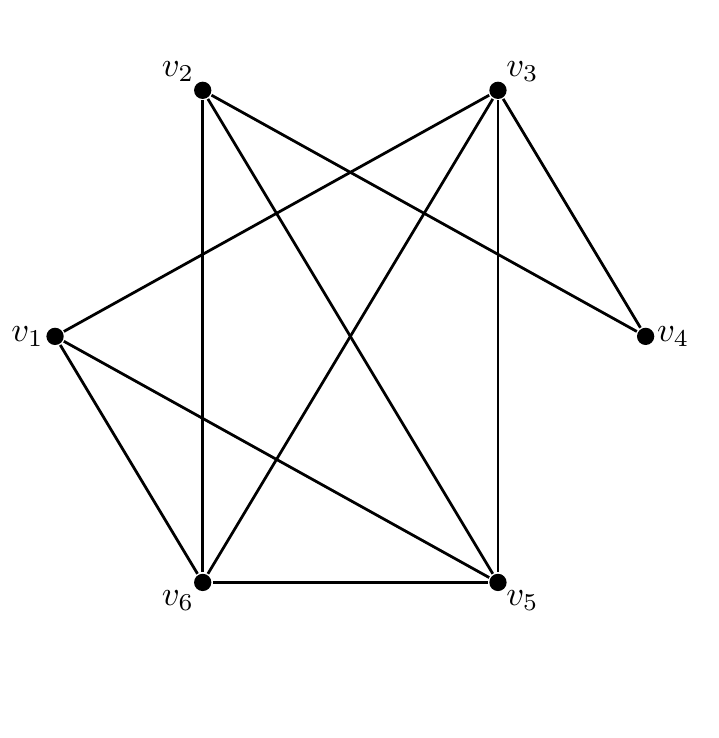}}
\end{figure}

\begin{example}[Two-stage stochastic problem: $\Gp$]\label{ex:2stage1}
Given a two-stage stochastic problem with $k$ second stage realizations, we partition the variables in $k + 1$ blocks (as discussed in Example \ref{ex:2stage}). So we have a graph $\Gp$ with vertex set $\{v_1, v_2, \dots, v_{k + 1}\}$ and edges $(v_1, v_2), (v_1, v_3), \dots, (v_1, v_{k +1})$. 
\end{example}

The sparse cuts we examine will be with respect to the blocks of variables. In other words, while the sparse cuts may be dense with respect to the variables in some blocks, it can be sparse globally if its support is on very few blocks of variables. To capture this, we use a \emph{support list} to encode which combinations of blocks cuts are allowed to be supported on; we state this in terms of subsets of nodes of the graph $\Gp$.

\begin{definition}[Column block-sparse closure]
Given the problem $\textup{(P)}$, let $\mathcal{J}:= \{J_1, J_2, \dots, J_q\}$ be a partition of the index set of columns of $A$ (that is $[n]$) and consider the packing interaction graph $\Gp = (V, E)$. 
\begin{enumerate}
	\item With slight overload in notation, for a set of nodes $S \subseteq V$ we say that inequality $\alpha x \le \beta$ is a \emph{sparse cut on} $S$ if it is a sparse cut on its corresponding variables, namely $\bigcup_{v_j \in S} J_j$. The closure of these cuts is denoted by $P^{(S)} := P^{(\bigcup_{v_j \in S} J_j)}$.
	
	\item Given a collection $\mathcal{V}$ of subsets of the vertices $V$ (the \emph{support list}), we use $\Pp$ to denote the closure obtained by adding all sparse cuts on the sets in $\mathcal{V}$'s, namely $$P^{\mathcal{V}, P} := \bigcap_{S \in \mathcal{V}} P^{(S)}.$$
\end{enumerate}
\end{definition}


This definition of column block-sparse closure allows us to define various levels of sparsity of cutting-planes that can be analyzed. In particular if $\mathcal{V}$ includes $V$, then we are considering completely dense cuts and indeed in that case $P^{\mathcal{V}, P} = P^I$.

Let $z^{I} = \textup{max}\{c^Tx \,|\, x \in P^{I}\}$ be the IP optimal value and $z^{\mathcal{V}, P} = \textup{max}\{c^Tx \,|\, x \in P^{\mathcal{V}, P}\}$ be the optimal value obtained by employing sparse cuts on the support list $\V$. Since we are working with a maximization problem $z^{\V, P} \ge z^I$ and our goal is to provide bounds on how much bigger $z^{\V, P}$ can be compared to $z^I$. Moving forward, we will be particularly interested in two types of sparse cut support lists $\mathcal{V}$:
\begin{enumerate}
\item \emph{Super sparse closure} ($P^{S. S.} := P^{\mathcal{V}, P}$ and $z^{S. S.} := z^{\mathcal{V}, P}$): We will consider the sparse cut support list $\mathcal{V} = \{ \{v_1\}, \{v_2\}, \{v_3\}, \dots, \{v_{|V|}\}$). We call this the super sparse closure, since once the partition $\mathcal{J}$ is decided, these are the sparsest cuts to be considered.  
\item \emph{Natural sparse closure}: Let $A_1, ...,A_m$ be the rows of $A$. Let $V^i$ be the set of nodes corresponding to block variables that have non-zero entries in $A_i$ (that is $V^i = \{ v_u\in V\,|\, A_{ik} \neq 0 \textrm{ for some } k \in J_u\}$).
For the resulting sparse cut support list $\mathcal{V} = \{ V^1, V^2, \dots, V^m\}$, we call the column block-sparse closure as the `natural' sparse closure (and $P^{N. S.} := P^{\mathcal{V}, P}$ and $z^{N. S.} := z^{\mathcal{V}, P}$). The reason to consider this case is that once the partition $\mathcal{J}$ is decided, the cuts defining $P^{N.S.}$ most closely resembles the sparsity pattern of the original constraint matrix. To see this, consider the case when $\mathcal{J} = \{\{1\}, \{2\}, \dots, \{n\}\}$, that is every block is a single variable. In this case, the sparse cut support list $\mathcal{V}$ represents exactly the different sparsity pattern of the various rows of the IP formulation. Indeed the cuts added in~\cite{crowder:jo:pa:1983} satisfied this sparsity pattern. 
\end{enumerate}

\begin{example}[Two-stage stochastic problem: Specific-scenario cuts, Natural sparse closure is same as relaxing ``nonanticipativity" constraints in some cases]\label{ex:2stage2}
Consider again the two-stage stochastic problem with $k$ second stage realizations as discussed in Example \ref{ex:2stage1}. Consider the cuts on the support of first stages variables together with the variables corresponding to one second stage realization, the so-called \emph{specific-scenario cuts}. Such cutting-planes are well-studied, see for example~\cite{GuanAN09,ZhangK14}. Notice that based on the partition $\mathcal{J}$ previously discussed, the closure of all the specific-scenario cuts is precisely equivalent to the natural sparse closure $P^{N.S.}$.

A standard technique in stochastic integer programming is to make multiple copies of the first stage variables, which are connected through equality constraints, and relax these (``nonanticipativity'') equality constraints (via Lagrangian relaxation methods) to produce computationally strong bound~\cite{CaroeS99}.
It is straightforward to see that in the case where there is complete recourse, the closure of the specific-scenario cuts or equivalently the natural sparse closure, 
will give the same bound as this nonanticipativity dual. 
\end{example}

To the best of our knowledge there are no known global bounds known on the quality of nonanticipativity dual. The results in this paper will be able to provide some such bounds. 

In order to present our results, we require the following generalizations of standard graph-theoretic notions such a stable sets and chromatic number.
\begin{definition}[Mixed stable set subordinate to $\mathcal{V}$]\label{defn:graph theoretic}
Let $G = (V, E)$ be a simple graph. Let $\V$ be a collection of subsets of the vertices $V$. We call a collection of subsets of vertices $\mathcal{M} \subseteq 2^{V}$ a \emph{mixed stable set subordinate to} $\mathcal{V}$ if the following hold:
\begin{enumerate}
\item Every set in $\mathcal{M}$ is contained in a set in $\V$
\item The sets in $\mathcal{M}$ are pairwise disjoint
\item There are no edges of $G$ with endpoints in distinct sets in $\mathcal{M}$.
\end{enumerate}
\end{definition}

\begin{definition}[Mixed chromatic number with respect to $\mathcal{V}$] \label{defn:chromatic}
	Consider a simple graph $G = (V,E)$ and a collection $\V$ of subset of vertices. 
	
	\begin{itemize}
		\item The \emph{mixed chromatic number} ${\bar{\eta}}^{\mathcal{V}}(G)$ of $G$ with respect to $\mathcal{V}$ is the smallest number of mixed stables sets $\M^1, \ldots, \M^k$ subordinate to $\V$ that cover all vertices of the graph (that is, every vertex $v \in V$ belongs to a set in one of the $\M^i$'s).
	
		\item (Fractional mixed chromatic number.) Given a mixed stable set $\M$ subordinate to $\V$, let $\chi_{\M} \in \{0,1\}^{|V|}$ denote its incidence vector (that is, for each vertex $v \in V$, $\chi_{\M}(v) = 1$ if $v$ belongs to a set in $\M$, and $\chi_{\M}(v) = 0$ otherwise.) 
Then we define the \emph{fractional mixed chromatic number} 
\begin{align}
	\eta^{\mathcal{V}}(G) = ~\min &\sum_{\M} y_{\M} \notag\\
\textup{s.t.}~& \sum_{\M} y_{\M} \chi_{\M} \geq \ones \label{eq:etadefn} \\
& y_{\M} \ge 0 ~~~\forall \M, \notag
\end{align}
where the summations range over all mixed stable sets subordinate to $\V$ and $\ones$ is the vector in $\mathbb{R}^{|V|}$ of all ones. 
	\end{itemize}
\end{definition}

Note that when $\mathcal{V}$ corresponds to the super sparse closure $P^{S.S.}$, that is the elements of $\mathcal{V}$ is the collection of singletons, the mixed stable sets subordinate to $\V$ are the usual stable sets in the graph and the (resp. fractional) mixed chromatic number are the usual (resp. fractional) chromatic number.

	The following simple example helps to clarify and motivate the definition of mixed stable sets: they identify sets of variables that can be set \textbf{independently} and still yield feasible solutions. 

	\begin{example}\label{ex:mixedStableSet}
		Consider the simple packing two-stage stochastic problem:
		\begin{align*}
			\max ~& c_1 x_1 + c_2 x_2 + c_3 x_3\\
			 \textrm{s.t.}~& a_{11} x_1 + a_{12} x_2 ~~~~~~~~ \le b_1\\
			 							 & a_{21} x_1 + ~~~~~~~~ a_{23} x_3 \le b_2\\
			 							 & x \in \Z^3_+.
		\end{align*}
		Consider the partition $\mathcal{J} = \{\{1\}, \{2\}, \{3\}\}$ so that the graph $\Gp$ equals the path $v_2 - v_1 - v_3$. Consider the support list $\V = \{\{1,2\}, \{1,3\}\}$ for the ``natural sparse closure'' setting. Then the maximal mixed stable sets of subordinate to $\V$ are $\M_1 = \{\{1,2\}\}$, $\M_2 = \{\{1,3\}\}$ and $\M_3 = \{\{2\}, \{3\}\}$; $\M_4 = \{\{1\}\}$ is a non-maximal mixed stable set.
		
		To see that mixed stable sets identify sets of variables that can be set independently and still yield a feasible solution, for $i =1,2,3$ let $x^{(i)}$ be the optimal solution to the above packing problem conditioned on $x_j = 0$ for all $j\neq i$; for example $x^{(2)} = (0, \lfloor b_1/a_{12} \rfloor, 0)$ and $x^{(3)} = (0, 0, \lfloor b_2/a_{23}\rfloor)$. Taking the mixed stable set $\M_3 = \{\{2\},\{3\}\}$ we see that the combination of $x^{(2)} + x^{(3)} = (0, \lfloor b_1/a_{12}\rfloor, \lfloor b_2/a_{23}\rfloor)$ is also feasible for the problem.
		
		Moreover, these solutions allow us to upper bound the ratio $z^{\V,P}/z^I$, namely the quality of the column block-sparse closure. First, the integer optimum $z^I$ is at least $\max\{c^T (x^{(2)} + x^{(3)}), c^T x^{(1)}\}$. Also, one can show that $z^{\V,P} \le c^T (x^{(2)} + x^{(3)}) + c^T x^{(1)}$ (this uses the fact that actually $x^{(2)} + x^{(3)}$ is the optimal solution for the problem conditioned on $x_1 = 0$, and $x^{(1)}$ the optimal solution conditioned on $x_2 = x_3 = 0$). Together this gives $z^{\V,P}/z^I \le 2$. Notice that the upper bound on $z^{\V}$ is obtained by adding up the solutions corresponding to the sets $\M_3$ and $\M_4$, which together cover all the variables of the problem. Looking at the fractional chromatic number $\eta^{\V}(\Gp)$ allow us to provide essentially the best such bound.
	\end{example}

Our first result gives a worst-case upper bound on $\frac{z^{\mathcal{V}, P}}{z^I}$ that is, surprisingly, independent of the data $A$, $b$, $c$, and depends only on the packing interaction graph $\Gp$ and the choice of sparse cut support list $\mathcal{V}$.


\begin{theorem}\label{thm:packing}
	Consider a packing integer program as defined in ($\textup{P}$). Let $\mathcal{J} \subseteq 2^{[n]}$ be a partition of the index
set of columns of $A$ and let $G = \Gp = (V,E)$ be the packing interaction graph of $A$. Then for any sparse cut support list $\V \subseteq 2^V$ we have
\begin{eqnarray*}
z^{\mathcal{V}, P} \leq \eta^{\mathcal{V}}(G) \cdot z^I.
\end{eqnarray*}
\end{theorem}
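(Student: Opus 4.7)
The plan is: let $\hat x$ be an LP-optimal point of $\max\{c^T x : x \in P^{\V,P}\}$, decompose its objective as $c^T \hat x = \sum_{v_j \in V} c_{J_j}^T \hat x_{J_j}$ (partial inner products over each column block), and use the fractional mixed chromatic number LP~\eqref{eq:etadefn} to combine block-level bounds. The key intermediate statement is: for every mixed stable set $\M$ subordinate to $\V$, there is a single point $\tilde x^{\M} \in P^I$ with $c^T \tilde x^{\M} = \sum_{v_j \in \bigcup_{S \in \M} S} c_{J_j}^T \hat x_{J_j}$, so this sum is at most $z^I$. Weighting these inequalities by an optimal fractional chromatic solution $(y_\M)$, and using $c \ge 0$, $\hat x \ge 0$, and $\sum_\M y_\M \chi_\M(v_j) \ge 1$ for every vertex $v_j$, gives
\[ c^T \hat x \;\le\; \sum_\M y_\M \sum_{v_j \in \bigcup_{S \in \M} S} c_{J_j}^T \hat x_{J_j} \;\le\; z^I \sum_\M y_\M \;=\; \eta^{\V}(G)\cdot z^I. \]

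The first building block is a single-$S$ lifting: for any $S \in \V$, a sparse cut on $S$ is by Definition~\ref{defn:sparseclosure} a valid inequality for $P^I$ whose support lies in the variables of $S$, and conversely every facet of $\proj_S P^I$ lifts to such a cut, so $\proj_S P^{(S)} = \proj_S P^I$. Hence $\hat x \in P^{(S)}$ guarantees that the $S$-coordinates of $\hat x$ come from some point of $P^I$. The packing structure ($A, b, c \ge 0$, and the only remaining constraints being $x_j \in \Z_+$ or $\R_+$) lets me zero out all other coordinates while remaining in $P^I$, producing $\tilde x^{(S)} \in P^I$ that is supported on $S$-blocks and agrees with $\hat x$ there; thus $c^T \tilde x^{(S)} = \sum_{v_j \in S} c_{J_j}^T \hat x_{J_j}$.

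The second building block is combining across $\M$: set $\tilde x^{\M} := \sum_{S \in \M} \tilde x^{(S)}$. Disjointness of the sets in $\M$ yields disjoint supports of the summands, hence the objective identity $c^T \tilde x^{\M} = \sum_{v_j \in \bigcup_{S \in \M} S} c_{J_j}^T \hat x_{J_j}$. The crux is to verify $\tilde x^{\M} \in P^I$. I would first write each $\tilde x^{(S)}$ as a convex combination of integer feasible points supported on $S$-blocks (again by the packing reduction), and then combine one integer representative per $S \in \M$ into a single vector. This combined vector satisfies integrality/non-negativity coordinatewise, and obeys $A \tilde x^{\M} \le b$ because the absence of $\Gp$-edges between distinct $S, S' \in \M$ forces each row $A_k$ to have nonzeros in blocks of at most one $S \in \M$, so $A_k$ acts on the sum as it would on a single feasible summand. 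Expanding the product measure of the per-$S$ convex combinations then exhibits $\tilde x^{\M}$ as a convex combination of such feasible integer points, so $\tilde x^{\M} \in P^I$ and $c^T \tilde x^{\M} \le z^I$, as desired.

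The main obstacle will be this $P^I$-membership of $\tilde x^{\M}$: both the block-wise lifting and the cross-$\M$ merging lean critically on the packing hypothesis ($A, b, c \ge 0$ plus continuous/integer non-negativity constraints), which is what allows integer feasibility to be preserved under truncation to zero and under addition across a stable set. Everything else is bookkeeping around the fractional chromatic LP.
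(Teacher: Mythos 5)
Your proposal is correct and follows essentially the same route as the paper's proof: decompose $c^T\hat x$ over column blocks, weight by an optimal solution of the fractional mixed chromatic LP, use the packing structure to lift $S$-projections of $\hat x$ back into $P^I$ by zero-extension, and use the no-edges-between-distinct-sets property of a mixed stable set to show that row constraints see at most one summand. The only cosmetic difference is that the paper compares each block contribution to a conditioned integer optimum $x^{(S)}$ and sums those, whereas you sum the zero-extended fractional projections directly and certify $P^I$-membership via a product of the per-$S$ convex combinations; both work, and you should just note explicitly that $\hat x\in P^{(S)}$ for $S\in\M$ follows from $S\subseteq V_S\in\V$ and the monotonicity $P^{(V_S)}\subseteq P^{(S)}$.
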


As discussed before, if we are considering the super sparse closure $P^{S.S.}$, $\eta^{\mathcal{V}}(G)$ is the usual fractional chromatic number. Therefore we obtain the following possibly weaker bound using Brook's theorem~\cite{brooks41}.

\begin{corollary}\label{cor:brooks}
	Consider a packing integer program as defined in ($\textup{P}$). Let $\mathcal{J} \subseteq 2^{[n]}$ be a partition of the index
set of columns of $A$ and let $\Gp$ be the packing interaction graph of $A$. Let $\Delta$ denote the maximum degree of $G$. Then we have the following bounds on the optimum value of the super sparse closure $P^{S.S}$:
\begin{enumerate}
\item If $\Gp$ is not a complete graph or an odd cycle, then
\begin{eqnarray*}
z^{S.S.} \leq \Delta \cdot z^I.
\end{eqnarray*}
\item If $\Gp$ is a complete graph or an odd cycle, then
\begin{eqnarray*}
z^{S.S.} \leq (\Delta + 1) \cdot  z^I.
\end{eqnarray*}
\end{enumerate}
\end{corollary}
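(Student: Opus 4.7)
The plan is to derive the corollary as a direct specialization of Theorem~\ref{thm:packing} to the super sparse setting, combined with the standard upper bound of the ordinary chromatic number on the fractional chromatic number, and then an invocation of Brook's theorem. Concretely, for the super sparse closure $P^{S.S.}$ we have $\mathcal{V} = \{\{v_1\}, \ldots, \{v_{|V|}\}\}$, so Theorem~\ref{thm:packing} immediately gives $z^{S.S.} \le \eta^{\mathcal{V}}(G) \cdot z^I$, and all the work reduces to bounding $\eta^{\mathcal{V}}(G)$ from above.

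The first step is to verify, as already flagged in the text preceding the statement, that in the singleton case $\eta^{\mathcal{V}}(G)$ is precisely the ordinary fractional chromatic number $\chi_f(G)$. Going back to Definition~\ref{defn:graph theoretic}, a mixed stable set $\M$ subordinate to such a $\V$ is a collection of singletons, pairwise disjoint and with no edge of $G$ between them, i.e.\ an ordinary stable set of $G$; moreover its incidence vector $\chi_{\M}$ is exactly the indicator of that stable set. Hence the LP~\eqref{eq:etadefn} reduces verbatim to the usual fractional chromatic number LP.

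The second step is the textbook inequality $\chi_f(G) \le \chi(G)$: any proper $\chi(G)$-coloring decomposes $V$ into $\chi(G)$ stable sets, which is a feasible integer solution of~\eqref{eq:etadefn} of value $\chi(G)$. The third and final step is Brook's theorem~\cite{brooks41}, which gives $\chi(G) \le \Delta$ unless $G$ has a connected component that is a complete graph or an odd cycle, in which case only $\chi(G) \le \Delta + 1$ can be guaranteed. Chaining $\eta^{\mathcal{V}}(G) \le \chi_f(G) \le \chi(G)$ with the appropriate Brook bound and plugging into Theorem~\ref{thm:packing} yields the two cases of the corollary.

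No step looks like a real obstacle. The only items that require a small amount of care are (i)~matching Definition~\ref{defn:graph theoretic} with the classical definition of stable set/fractional chromatic number in the singleton case, and (ii)~handling the $\Delta$ vs.\ $\Delta+1$ threshold in Brook's theorem cleanly, in particular noting that the statement as written in the corollary (``if $\Gp$ is a complete graph or an odd cycle'') is automatically covered by applying Brook's theorem to each connected component and taking the worst bound.
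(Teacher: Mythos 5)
Your proposal is correct and follows exactly the paper's route: specialize Theorem~\ref{thm:packing} to the singleton support list (where, as the paper already notes, $\eta^{\mathcal{V}}(G)$ is the ordinary fractional chromatic number), bound it by the chromatic number, and invoke Brooks' theorem. Your extra care about the component-wise application of Brooks' theorem is a minor refinement the paper leaves implicit, but the argument is the same.
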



Thus assuming the original IP is sparse and the maximum degree of $\Gp$ is not very high, the above result says that we get significantly tight bounds using only super sparse cuts. In fact it is easy to show the above Corollary's bounds can be tight when $\Gp$ is a 3-cycle or a star. We record this result here.

\begin{theorem}\label{thm:brookstight}
For any $\epsilon > 0$:
\begin{enumerate}
\item There exists a packing integer program as defined in ($\textup{P}$) and a partition $\J \subseteq 2^{[n]}$ of the index set of columns of $A$ such that the graph $\Gp$ is a 3-cycle and
\begin{eqnarray*}
z^{S.S.} \geq (3 - \epsilon) z^I.
\end{eqnarray*}
\item (Strength of super sparse cuts for packing two-stage problems)
There exists a packing integer program as defined in ($\textup{P}$) and a partition $\J \subseteq 2^{[n]}$ of the index set of columns of $A$ such that the graph $\Gp$ is a star and
\begin{eqnarray*}
z^{S.S.} \geq (2 - \epsilon) z^I.
\end{eqnarray*}
\end{enumerate}
\end{theorem}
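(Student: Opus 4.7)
My plan is to prove both parts of the theorem by explicit constructions that share the same mechanism: use binary variables, take the partition $\mathcal{J}$ to consist of singletons, and introduce pairwise packing constraints perturbed by a small parameter $\delta > 0$. The guiding observation is that with singleton blocks, a sparse cut on $\{v_i\}$ has support equal to a single variable $x_i$, and since in both constructions the projection of $P^I$ onto any coordinate will be the entire set $\{0,1\}$, the only valid inequalities on a single variable are (implied by) the bound constraints $0 \le x_i \le 1$, which are already present in $P^{LP}$. Consequently $P^{S.S.} = P^{LP}$, and it suffices to exhibit a large LP integrality gap.

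For Part 1, I would take three binary variables with the three pairwise packing constraints $x_i + x_j \le 2 - \delta$ for $\{i,j\} \subseteq \{1,2,3\}$, objective $\max x_1 + x_2 + x_3$, and partition $\mathcal{J} = \{\{1\}, \{2\}, \{3\}\}$. Each constraint has nonzero entries in exactly two distinct blocks, so $\Gp$ is the 3-cycle. The constraint $x_i + x_j \le 2 - \delta < 2$ prevents any two variables from simultaneously taking value $1$ in an integer solution, so $z^I = 1$. The symmetric fractional point $x_1 = x_2 = x_3 = 1 - \delta/2$ is LP-feasible with objective value $3 - \tfrac{3\delta}{2}$, so $z^{S.S.} \ge 3 - \tfrac{3\delta}{2}$, and choosing $\delta = 2\epsilon/3$ gives the desired ratio.

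For Part 2, I would fix any $k \ge 1$ (so $\Gp$ has $k+1$ nodes) and take variables $y, z^1, \ldots, z^k \in \{0,1\}$, constraints $y + z^i \le 2 - \delta$ for each $i$, objective $\max k\,y + \sum_{i=1}^k z^i$, and partition $\mathcal{J} = \{\{y\}, \{z^1\}, \ldots, \{z^k\}\}$. Each constraint has nonzero entries in $\{y\}$ and exactly one $\{z^i\}$, so $\Gp = K_{1,k}$. Again the constraint $y + z^i < 2$ prevents $y$ and any $z^i$ from being simultaneously $1$ in any integer solution, so either $y = 1$ with all $z^i = 0$ (value $k$) or $y = 0$ with all $z^i = 1$ (value $k$); thus $z^I = k$. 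The LP point $y = 1$, $z^i = 1 - \delta$ is feasible with value $k + k(1-\delta) = (2-\delta)k$. Setting $\delta = \epsilon$ then yields the $(2-\epsilon)$ gap.

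The one step I would double-check carefully is the claim $P^{S.S.} = P^{LP}$, equivalently that no nontrivial single-variable cut exists. For the triangle construction this follows because $\mathbf{0}, e_1, e_2, e_3$ are all integer-feasible, so the projection of $P^I$ onto each coordinate is $\{0,1\}$; for the star, the points $(1, 0, \ldots, 0)$ and $(0, 1, \ldots, 1)$ witness the same. Any valid inequality supported on one coordinate must be implied by the bounds of this $\{0,1\}$ projection, and those bounds are already in $P^{LP}$. Beyond this projection check, each argument reduces to an elementary LP optimum computation, so I do not expect any substantive further obstacles.
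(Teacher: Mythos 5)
Your proposal is correct and follows essentially the same route as the paper: small explicit instances with pairwise constraints of the form $x_i + x_j \le 2 - \delta$, the observation that single-block cuts reduce to the trivial bounds $0 \le x_i \le 1$ because the coordinate projections of $P^I$ are $\{0,1\}$, and an explicit fractional point certifying the gap. The only (cosmetic) difference is in Part~2, where the paper uses a center block of $\Delta$ variables with unit objective coefficients while you use a single center variable with objective weight $k$; both yield the same star graph and the same ratio.
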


We mention here in passing that there are many well-known upper bounds on the fractional chromatic number with respect to other graph properties, which also highlight that for sparse graph we expect the fractional chromatic number to be small. For example, let $G$ be a connected graph of max degree $\Delta$ and clique number $\omega(G)$. Then
\begin{enumerate}
\item $\eta(G) \leq \frac{\omega(G) + \Delta + 1}{2}$. (\cite{MolReed})
\item $\eta(G) \geq \Delta$ if and only if $G$ is a complete graph, odd cycle, a graph with $\omega(G) = \Delta$, a square of the 8-cycle, or the strong product of 5-cycle and $K_2$. Moreover if $\Delta \geq 4$ and  $G$ is not any of the graphs listed above, then $\eta(G) \leq \Delta - \frac{2}{67}$. (\cite{KingLP12})
\end{enumerate}

One question is whether we can get better bounds using the potentially denser natural sparse cuts. Equivalently, is the fractional chromatic number $\eta^{\mathcal{V}}(\Gp)$ much smaller when we consider the sparse cut support list $\mathcal{V}$ corresponding to the natural sparse closure? We prove results for some special, but important, structures.

\begin{theorem}[Natural sparse closure of trees]\label{thrm:nstree}
	Consider a packing integer program as defined in ($\textup{P}$). Let $\mathcal{J} \subseteq 2^{[n]}$ be a partition of the index
set of columns of $A$ and let $\Gp$ be the packing interaction graph of $A$. Suppose $\Gp$ a tree and let $\Delta$ be its maximum degree. Then:
\begin{eqnarray*}
z^{N.S.} \leq \left(\frac{2\Delta -1}{\Delta} \right)z^I.
\end{eqnarray*}
\end{theorem}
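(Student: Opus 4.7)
The plan is to apply Theorem~\ref{thm:packing} and reduce the claim to showing $\eta^{\V}(T) \le (2\Delta-1)/\Delta$, where $T = \Gp$ is the tree and $\V$ is the natural sparse closure support list. As a first step, I would characterize the mixed stable sets. Because $T$ is a tree and therefore triangle-free, every row of $A$ can have nonzero entries in at most two blocks of $\J$, for otherwise three pairwise adjacent blocks would form a triangle in $\Gp$. Consequently each $V^i \in \V$ has cardinality at most two, so the sets in $\V$ are just singletons and edges of $T$. A mixed stable set subordinate to $\V$ then corresponds to a vertex subset $S \subseteq V(T)$ with the property that $T[S]$ is a disjoint union of isolated vertices and isolated edges.

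The main step is to construct an explicit fractional cover of $V(T)$ by such mixed stable sets of total weight at most $(2\Delta-1)/\Delta$. My aim is to produce $2\Delta - 1$ mixed stable sets $\M_1, \ldots, \M_{2\Delta-1}$ in which every vertex of $T$ lies in $V(\M_i)$ for exactly $\Delta$ indices $i$, so that the assignment $y_{\M_i} = 1/\Delta$ yields a cover of total weight $(2\Delta-1)/\Delta$. The construction combines two ingredients: $\Delta$ matching-based sets derived from a proper $\Delta$-edge-coloring of $T$ (available since $T$ is bipartite by K\"onig's theorem), each refined into an induced matching of $T$ by dropping conflicting pair-elements, and at most $\Delta - 1$ additional independent-set-based sets used to fill in coverage deficits at lower-degree vertices. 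A vertex $v$ of degree $d = \deg(v)$ is automatically contained in $d$ of the matching-based sets via its incident edges, so it needs exactly $\Delta - d$ additional singleton appearances to reach the target coverage of $\Delta$.

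The main obstacle is the global scheduling of these singleton appearances: a singleton $\{v\}$ can be inserted into $\M_c$ only when no neighbor of $v$ lies in the pair-elements of $\M_c$, and all singletons within the same $\M_c$ must be pairwise non-adjacent in $T$. I would establish feasibility through a Hall-type argument matching deficit-vertices to available singleton slots, using the fact that the number of $\M_c$'s into which $\{v\}$ can be inserted is at least $\Delta - d$, a bound exploiting the locality of conflicts and the absence of short cycles in $T$. The construction would then be verified on the two extremal cases: the star $K_{1,\Delta}$, where the $\Delta - 1$ additional sets each coincide with the independent set of all leaves, and long paths, where $2\Delta - 1 = 3$ shifted mixed stable sets (each combining a small induced matching with compatible endpoint singletons) realize the required coverage. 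In both cases the bound $(2\Delta-1)/\Delta$ is achieved exactly, which also suggests tightness of this analysis in terms of $\Delta$.
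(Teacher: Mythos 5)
Your reduction is the same as the paper's: invoke Theorem~\ref{thm:packing}, observe that for a tree the natural support list consists of singletons and edges of $\Gp$ (so $\eta^{\V} = \eta^{E}$), and then exhibit $2\Delta-1$ mixed stable sets covering every vertex exactly $\Delta$ times, so that weights $1/\Delta$ certify $\eta^{E}(\Gp)\le \frac{2\Delta-1}{\Delta}$. Up to that point you match the paper exactly. The gap is in the construction of those $2\Delta-1$ sets, which you leave as a plan, and the plan as stated does not go through. A color class of a proper $\Delta$-edge-coloring of a tree is a matching but generally \emph{not} an induced matching (in a path $a{-}b{-}c{-}d$ the edges $ab$ and $cd$ may share a color yet are joined by $bc$), and a mixed stable set requires no tree-edge between distinct pair-elements. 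Once you ``drop conflicting pair-elements'' to repair this, your key accounting claim --- that a vertex of degree $d$ is automatically covered $d$ times by the matching-based sets --- is no longer true, so the deficit of a vertex is not $\Delta-d$ and the whole budget of $\Delta-1$ patching sets is uncontrolled. In fact what you need is a partition of $E(T)$ into \emph{induced} matchings (a strong edge coloring), and trees can require up to $2\Delta-1$ classes for that, not $\Delta$; this is precisely why the paper uses $2\Delta-1$ labels from the outset rather than $\Delta$ matchings plus $\Delta-1$ independent sets.

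The second unproved step is the scheduling of singletons. Your availability bound (``at least $\Delta-d$ insertable sets for a degree-$d$ vertex'') is asserted, not shown, and a Hall/SDR argument is not the right shape here: each deficit vertex needs $\Delta-d$ \emph{distinct} slots, adjacent deficit vertices may not share a slot, and whether a slot is open depends on which other singletons occupy it --- this is a list-scheduling problem, not a bipartite matching. A concrete stress test is the spider with center $c$ and legs $c{-}a_i{-}b_i$ for $i\in[\Delta]$, $\Delta\ge 3$: each $a_i$ needs $\Delta-2$ singleton appearances and each adjacent $b_i$ needs $\Delta-1$, totalling $2\Delta-3>\Delta-1$, so they cannot be accommodated in $\Delta-1$ purely-independent sets; and the edges $a_ib_i$ cannot live in any set containing some $ca_j$ (since $ca_i$ joins the two pairs), which is exactly the induced-matching failure above. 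The paper's proof dissolves both difficulties at once: it first augments $T$ by dummy leaves so that every internal node has degree exactly $\Delta$, roots the tree at an internal node, and assigns labels in $[2\Delta-1]$ top-down so that the edges from $v$ to its children avoid all $\Delta$ labels used at $\parent(v)$, with each leaf absorbing the $\Delta-1$ unused labels as a singleton. Internal nodes are then covered $\Delta$ times by edges alone, leaves by one edge plus $\Delta-1$ singletons, the same-label classes are automatically induced matchings, and no global scheduling argument is needed. To salvage your route you would need either this augmentation idea (so that only pairwise non-adjacent leaves ever require singletons) or a proof that trees admit a strong edge coloring with $2\Delta-1$ classes together with a worked-out patching argument; as written, the central lemma is missing.
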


Compare this result for natural sparse cuts with the result for super sparse cuts on trees. While with super sparse cuts we able able to get a multiplicative bound of $2$ (this is the fractional chromatic number for bipartite graphs), using natural sparse cuts the bound is always strictly less than~$2$.

	Interestingly, this upper bound is tight even when the induced graph $\Gp$ is a star, which corresponds exactly to the case of stochastic packing programs. The construction of the tight instances are based on special set systems called \emph{affine designs}, where we exploit their particular partition and intersection properties.

\begin{theorem}[Tightness of natural sparse closure of trees] \label{thm:LBstar}
For any $\epsilon > 0$, there exists a packing integer program ($P$) and a suitable partition $\J$ of variables where $\Gp$ is a star with max degree $\Delta$ such that 
\begin{eqnarray*}
z^{N.S.} \geq \left(\frac{2\Delta-1}{\Delta} - \epsilon \right)z^I.
\end{eqnarray*}
\end{theorem}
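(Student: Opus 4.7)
The plan is to mimic the optimal dual of the fractional mixed chromatic number for a star with $\Delta$ leaves and the edge-based support list, which assigns weight $\frac{1}{\Delta}$ to each ``edge'' mixed stable set $\{\{v_0,v_i\}\}$ and weight $\frac{\Delta-1}{\Delta}$ to the ``all-leaves'' mixed stable set $\{\{v_1\},\dots,\{v_\Delta\}\}$ (a short calculation shows these weights realize $\eta^{\V}=\frac{2\Delta-1}{\Delta}$ on a star). The instance will be engineered so that: (a) the two integer strategies suggested by these stable sets---``use the first stage together with exactly one scenario'' and ``zero out the first stage and independently optimize each scenario''---both achieve the integer optimum $z^I$; (b) the scenario-$i$ subproblem on $J_0 \cup J_i$ has integer hull equal to its LP relaxation, so all sparse cuts on $J_0\cup J_i$ are already implied by $P^{LP}$ and hence $P^{N.S.}=P^{LP}$; and (c) the ``$\frac{1}{\Delta},\frac{\Delta-1}{\Delta}$''-weighted convex combination of strategies (a) is LP-feasible and has objective value $\frac{2\Delta-1}{\Delta}\,z^I-O(\epsilon z^I)$.

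To supply the combinatorial freedom for (a)--(c) to coexist I would use an affine design, specifically the affine plane of order $q$ for a prime power $q$ chosen large enough in terms of $1/\epsilon$: it has $q^2$ points partitioned, in $q+1$ different parallel classes, into $q$ pairwise-disjoint blocks (lines) of size $q$, and any two blocks from different classes meet in exactly one point. I would designate $\Delta$ of the parallel classes as scenario classes $C_1,\dots,C_\Delta$ and reserve one additional class $C_0$ as an auxiliary class. The first-stage block $J_0$ has one variable $y_p$ per point $p$, and scenario block $J_i$ has one variable $z^i_L$ per line $L\in C_i$. Constraints, of the form $\sum_{p\in L}y_p+\alpha z^i_L\le \beta$ for suitable positive constants (tuned to match the target gap), are indexed by pairs $(i,L)$ with $L\in C_i$, so every constraint lies in some $J_0\cup J_i$ and $\Gp$ is exactly a star with $\Delta$ leaves $v_1,\dots,v_\Delta$ centered at $v_0 = J_0$. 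The objective puts non-negative weights on the two families of variables, tuned jointly with $\alpha,\beta$.

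With this setup the analysis has three ingredients. First, an integer-programming analysis parameterized by $|Y|=|\{p:y_p=1\}|$ shows that, thanks to the intersection property of the auxiliary class, both the solution $y_p=\mathbf{1}[p\in L_0]$ for some $L_0\in C_0$ (which is a transversal meeting every scenario line exactly once, leaving one slot open per constraint) and the solution $y\equiv 0$ with each $z^i_L$ maxed out achieve the same value $z^I$, and no other integer solution improves on this. Second, since each scenario's constraints are, per line, a single knapsack whose integer hull equals the LP relaxation, sparse cuts on $J_0\cup J_i$ add nothing new and $P^{N.S.}=P^{LP}$. Third, the fractional solution obtained by averaging the integer strategies---setting $y_p=\frac{q-1}{q}$ for every $p$ and $z^i_L$ so as to saturate the per-line constraint---is LP-feasible by construction and yields the objective $\frac{2\Delta-1}{\Delta}\,z^I$ up to a term that vanishes as $q\to\infty$, giving the claimed $(1-\epsilon)$ factor. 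For $\Delta$ not a prime power, one takes an affine plane of slightly larger prime-power order and restricts to any $\Delta$ of its classes.

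The main obstacle is the simultaneous calibration of $\alpha,\beta$, and the objective weights so that (b) holds (scenario LP-tightness, cutting off the power of sparse cuts) while (a) and (c) together deliver the exact global LP/IP gap $\frac{2\Delta-1}{\Delta}$; any mismatch either collapses $z^{N.S.}$ down to $z^I$ or pushes it past the upper bound of Theorem~\ref{thrm:nstree}. This balance is possible only because of the uniform intersection structure of the affine plane, which guarantees that any auxiliary line is a transversal of every scenario class; once this structural fact is invoked the tuning reduces to a direct but careful arithmetic verification.
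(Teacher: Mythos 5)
Your overall plan (an affine-design-based star instance, a fractional point placed in the natural sparse closure by averaging scenario-wise integer solutions, and a structural cap on $z^I$) is in the same spirit as the paper's proof, but the construction you sketch has two genuine gaps. First, your step (b) argues that $P^{N.S.}=P^{LP}$ because the scenario-$i$ subproblem on $J_0\cup J_i$ is a disjoint union of single knapsacks whose integer hulls equal their LP relaxations. This addresses the wrong object: by Observations \ref{prop:fund} and \ref{obs:projPack}, the sparse cuts on $J_0\cup J_i$ are the valid inequalities for the projection of the \emph{full} integer hull $P^I$, which for a packing problem is the integer hull of scenario $i$'s constraints \emph{together with} the residual constraints $\sum_{p\in L}y_p\le\beta$ coming from every line $L$ of every other class $C_j$. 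Those residual constraints couple lines from different parallel classes through shared points, so the relevant system is no longer a disjoint union of knapsacks and its LP-tightness does not follow from your per-line argument. The paper sidesteps exactly this issue by exhibiting the candidate fractional point, restricted to each support $(x,y_i)$, as an explicit convex combination of points $(\chi_A, e^i)$, $A\in\F_i$, each verified to be feasible for \emph{all} scenarios' constraints (Lemma \ref{lemma:designNS}); any repair of your argument would have to do the same.

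Second, the quantitative skeleton of (a) and (c) does not cohere. Because a line $L_0$ of the auxiliary class $C_0$ meets every scenario line in exactly one point, the solution $y=\chi_{L_0}$ leaves slack in every constraint (your own observation), so for any positive objective weight on the $y$ variables and any $\beta$ large enough to accommodate a scenario variable on top of one unit of $y$-mass it can be combined with all the $z^i_L$ set to their maxima and then strictly dominates the ``$y\equiv 0$, scenarios maxed'' solution rather than tying it; conversely, if $\beta$ is tightened to prevent this, the proposed fractional point $y_p=\tfrac{q-1}{q}$ violates the LP relaxation outright, since each line then carries $y$-mass $q-1$. So the two integer strategies you need to tie at $z^I$ cannot both be optimal under any calibration of $\alpha$, $\beta$ and the weights, and the ``careful arithmetic verification'' you defer is not routine --- the gap mechanism itself is missing. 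In the paper's construction the gap is forced structurally rather than by calibration: the scenario blocks are single indicator variables $y_i$, the constraints encode ``if $y_i=1$ then $\supp(x)$ lies inside one line of $\F_i$,'' and activating two indicators confines $\supp(x)$ to the intersection of two lines from different classes, which has size at most $1$ (Lemma \ref{lemma:designIP}); this caps $z^I$ at roughly $n$ while each single-scenario relaxation still permits value roughly $2n$.
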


As discussed in Example \ref{ex:2stage1}, for the case of two-stage stochastic problem with the right choice of $\mathcal{J}$ the packing interaction graph is a star. So we obtain the following corollary of Theorem \ref{thrm:nstree}.
\begin{corollary}[Strength of specific-scenario cuts for packing two-stage stochastic problems]
Consider a packing-type two-stage stochastic problem with $k$ realization. Then 
\begin{eqnarray*}
z^{N.S.} \leq \left(\frac{2k-1}{k} \right)z^I,
\end{eqnarray*}
where $z^{N.S.}$ is the objective function obtained after adding all specific-scenario cuts. Moreover this bound is tight.
\end{corollary}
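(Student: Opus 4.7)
The plan is to derive both the upper bound and the tightness directly from the results already established in the excerpt, so this corollary is essentially a translation of the tree-structured statements into the language of two-stage stochastic programs.

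First I would invoke Example \ref{ex:2stage1} to observe that, under the natural partition $\mathcal{J}$ of the variables into the first-stage block $y$ and one block per scenario $z^i$, the packing interaction graph $\Gp$ is a star with center $v_1$ (corresponding to $y$) and leaves $v_2,\ldots,v_{k+1}$ (corresponding to the $k$ scenarios). In particular $\Gp$ is a tree with maximum degree $\Delta = k$. Next, by Example \ref{ex:2stage2}, for this partition the natural sparse closure $P^{N.S.}$ coincides with the closure obtained by adding all specific-scenario cuts (each supported on the first-stage variables together with a single scenario's variables). Applying Theorem \ref{thrm:nstree} to this star yields
\begin{equation*}
z^{N.S.} \leq \frac{2\Delta-1}{\Delta}\, z^I = \frac{2k-1}{k}\, z^I,
\end{equation*}
which is the upper bound claimed.

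For the tightness assertion, I would appeal to Theorem \ref{thm:LBstar}: for any $\epsilon>0$ and any prescribed maximum degree $\Delta$, that theorem furnishes a packing integer program together with a partition $\mathcal{J}$ for which $\Gp$ is a star of maximum degree $\Delta$ and $z^{N.S.} \ge \left(\tfrac{2\Delta-1}{\Delta}-\epsilon\right) z^I$. Taking $\Delta = k$, it only remains to verify that this instance can be cast as a packing two-stage stochastic program with $k$ scenarios. Since $\Gp$ is a star, the center block has no constraints shared with any two distinct leaf blocks simultaneously, and each constraint of the original formulation involves only the center block and/or a single leaf block. Relabeling the center block as first-stage variables $y$ and each leaf block as a scenario's recourse variables $z^i$ gives exactly the two-stage stochastic form of Example \ref{ex:2stage}, so the instance of Theorem \ref{thm:LBstar} is already a valid two-stage stochastic instance.

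The main obstacle, if any, is the final translational step in the tightness argument: one has to confirm that the structural form of the instances produced in Theorem \ref{thm:LBstar} is genuinely compatible with the two-stage stochastic template (non-negative data, maximization, first-stage/second-stage separation by the star partition). However, since the packing interaction graph being a star already encodes precisely this separation, the identification is immediate and no new construction is needed. The remainder is just bookkeeping to match notation between the ``star with max degree $\Delta$'' formulation and the ``$k$-scenario two-stage'' formulation with $\Delta = k$.
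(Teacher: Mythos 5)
Your proposal is correct and follows essentially the same route as the paper: the corollary is obtained by combining Example \ref{ex:2stage1} (the interaction graph is a star of maximum degree $\Delta = k$), Example \ref{ex:2stage2} (specific-scenario cuts give exactly the natural sparse closure), Theorem \ref{thrm:nstree} for the upper bound, and Theorem \ref{thm:LBstar} for tightness. Your final verification step is also consistent with the paper's construction, since the instances of Theorem \ref{thm:LBstar} have only center-block constraints and center-plus-one-leaf constraints with non-negative data, which is precisely the two-stage packing template with $k$ scenarios.
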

We note that the analysis of approximation algorithm for two stage matching problem in the papers~\cite{EscoffierGMS2010,KongSchaefer2006} is related to the above result. We plan on exploring this relation is a future paper.

Finally we consider the case of natural sparse cutting-planes when $\Gp$ is a cycle. Interestingly, the fractional mixed chromatic number $\eta^{\mathcal{V}}(\Gp)$ depends on the length of the cycle modulo 3.  

\begin{theorem}[Natural sparse closure of cycles]\label{thm:nscycle}
	Consider a packing integer program as defined in ($\textup{P}$). Let $\mathcal{J} \subseteq 2^{[n]}$ be a partition of the index
set of columns of $A$ and let $\Gp$ be the packing interaction graph of $A$. If $\Gp$ is a cycle of length $K$, then:
\begin{enumerate}
\item If $K  = 3k, k\in \mathbb{Z}_{++}$,
 then $z^{N.S.}\leq \frac{3}{2} z^I$.
\item If $ K  = 3k +1, k\in \mathbb{Z}_{++}$, then $z^{N.S.}\leq \frac{3k+1}{2k}z^I$.
\item If $ K  = 3k +2, k\in \mathbb{Z}_{++}$, then $z^{N.S.}\leq \frac{3k+2}{2k+1}z^I$.
\end{enumerate}
Moreover, for any $\epsilon > 0$, there exists a packing integer program with a suitable partition $\mathcal{V}$  of variables, where $\Gp$ is a cycle of length $K$ such that
\begin{enumerate}
\item If $ K  = 3k, k\in \mathbb{Z}_{++}$, then $z^{N.S.}\geq \left(\frac{3}{2} - \epsilon \right)z^I$.
\item If $ K  = 3k +1, k\in \mathbb{Z}_{++}$, then $z^{N.S.}\geq \left(\frac{3k+1}{2k} - \epsilon\right)z^I$.
\item If $ K  = 3k +2, k\in \mathbb{Z}_{++}$, then $z^{N.S.}\geq \left(\frac{3k+2}{2k+1} - \epsilon\right)z^I$.
\end{enumerate}
\end{theorem}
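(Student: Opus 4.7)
The plan is to split the proof naturally into an upper bound (via Theorem \ref{thm:packing}) and three matching lower-bound constructions, one per residue class of $K$ modulo~$3$.

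For the upper bound, the first step is to invoke Theorem \ref{thm:packing}, reducing the problem to computing $\eta^{\mathcal{V}}(C_K)$ where $\mathcal{V}$ is the edge set of $C_K$. The structural observation is that a collection $\mathcal{M}$ of subsets of $V(C_K)$ is a mixed stable set subordinate to $\mathcal{V}$ if and only if its vertex support $V(\mathcal{M}) := \bigcup_{S \in \mathcal{M}} S$ induces a subgraph of $C_K$ of maximum degree at most one, i.e., no three consecutive cycle vertices all lie in $V(\mathcal{M})$. A simple pigeonhole argument on the periodic ``XXO'' pattern around the cycle then gives $\max|V(\mathcal{M})| = 2k$ when $K \in \{3k, 3k{+}1\}$ and $\max|V(\mathcal{M})| = 2k{+}1$ when $K = 3k{+}2$. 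The second step evaluates $\eta^{\mathcal{V}}(C_K)$ exactly: the $K$ cyclic rotations of a maximum mixed stable set $\mathcal{M}_0$, each weighted $1/|V(\mathcal{M}_0)|$, cover every vertex exactly once by cyclic symmetry, yielding $\eta^{\mathcal{V}}(C_K) \le K/|V(\mathcal{M}_0)|$; the uniform vertex weight $1/|V(\mathcal{M}_0)|$ is dual-feasible (every mixed stable set has at most $|V(\mathcal{M}_0)|$ vertices), giving the matching lower bound on $\eta^{\mathcal{V}}$. This produces the three stated upper bounds on $z^{N.S.}/z^I$.

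For the tightness part, I would construct, for each residue class, a parametric family of packing IP instances with interaction graph $C_K$ whose ratio $z^{N.S.}/z^I$ approaches $K/|V(\mathcal{M}_0)|$ as a parameter $N \to \infty$. The template mirrors the constructions used in Theorems \ref{thm:brookstight} and \ref{thm:LBstar}: each block holds $N$ variables arranged via a cyclically symmetric combinatorial design, and the edge constraints will be aggregates chosen so that (i) any integer-feasible solution has block-level support contained in some mixed stable set, forcing $z^I$ to be at most roughly $|V(\mathcal{M}_0)| \cdot N$, while (ii) the LP admits the convex combination, with weights $1/K$, of the $K$ rotated ``maximum-mixed-stable-set'' integer solutions, so that the natural sparse closure value is roughly $K \cdot N$.

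The hard part will be achieving property (ii) consistently with the interaction graph being $C_K$: because edge constraints are $2$-sparse, the natural sparse cut on any edge equals the convex hull of the projection of $P^I$ onto that edge, which can be a very strong cut. The construction must arrange that each such edge projection is ``rich'' (essentially determined by only the aggregated edge constraint, so that nothing sharper than the original edge inequality is derivable on it), while the global IP still enforces ``no three consecutive blocks simultaneously fully active'', which is needed for (i). This balance is automatic in the $C_3$ case because there is only one relevant edge projection per pair, but becomes delicate for $K \ge 4$; the three residues of $K \bmod 3$ lead to three distinct constructions, differing in how the ``leftover'' vertices of the periodic XXO pattern (none for $K = 3k$, a single unused vertex for $K = 3k{+}1$, and one singleton-sized piece for $K = 3k{+}2$) are incorporated into the design.
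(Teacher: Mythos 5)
Your upper-bound argument is correct and is essentially the paper's: the paper likewise reduces to bounding $\eta^{E}(\Gp)$ via Theorem~\ref{thm:packing} and exhibits explicit fractional covers by mixed stable sets that are precisely the cyclic rotations of a maximum ``XXO-pattern'' set ($3$, $3k+1$, and $3k+2$ sets covering each vertex $2$, $2k$, and $2k+1$ times in the three cases). Your characterization of mixed stable sets subordinate to $E$ as vertex sets containing no three cyclically consecutive vertices, and the resulting maximum sizes $2k$, $2k$, $2k+1$, match the paper's Lemma~\ref{lemma:sizeNonAdj}.

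The tightness half, however, has a genuine gap. First, the one concrete mechanism you describe for certifying a large value in $P^{N.S.}$ --- that ``the LP admits the convex combination, with weights $1/K$, of the $K$ rotated maximum-mixed-stable-set integer solutions'' --- cannot work as stated: a convex combination of integer feasible points lies in $P^I$, so its objective value is at most $z^I$ and it cannot witness any gap. What is needed is a point that is \emph{not} in $P^I$ but whose projection onto each adjacent pair of blocks lies in the corresponding projection of $P^I$ (Observation~\ref{prop:fund}); in the paper this is the uniform point $(\frac{1}{n}\ones,\ldots,\frac{1}{n}\ones)$ of value $Kn$, and each edge projection is written as an average of integer points supported on that single edge only. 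Second, you correctly identify the crux --- every edge projection must be ``rich'' while the global IP still forbids three adjacent fully active blocks --- but you do not supply a construction achieving it. This is exactly where the paper invokes affine $n$-designs, coupling consecutive blocks through constraints of the form ``if $x^i\neq 0$ then $x^{i+1}\le\chi_A$ for some $A\in\F_i$,'' so that two active neighbours of block $i+1$ force $x^{i+1}\le\chi_{A\cap B}$ with $|A\cap B|\le 1$, yielding $z^I\le (n-1)\cdot|V(\M_0)|+K$ while $z^{N.S.}\ge Kn$. Without such a device the claimed instances are not established, so the lower-bound statements remain unproven. A minor further point: the paper uses a single construction for all $K$; only the combinatorial count of non-three-adjacent blocks depends on $K\bmod 3$, so three separate constructions are unnecessary.
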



All proofs of the above results are presented in Section \ref{sec:proofpacking}.

\subsection{Covering problems}\label{results:covering}
In this section, we present our results on the quality of sparse cutting-planes for covering-type problems, that is problems of the following form:
\begin{eqnarray}
\textup{(C)}~~~\textup{min}&& c^Tx \notag \\
s.t. &&Ax \geq b \notag \\
&&x_j \in \mathbb{Z}_+, \forall j \in \mathcal{L}  \notag \\
&&x_j \in \mathbb{R}_+, \forall j \in [n]\backslash \mathcal{L}
\notag
\end{eqnarray}
with $A \in \mathbb{Q}_+^{m \times n}$, $b\in \mathbb{Q}_+^m$, $c \in
\mathbb{Q}_+^{n}$ and $\mathcal{L} \subseteq [n]$. In this case, we would like to prove lower bounds on the objective functions after adding the sparse cutting-planes. 

Our first observation is a negative result: super sparse cuts as defined for the packing-type problems can be arbitrarily bad for the case of covering problems. In order to present this result, let formalize the notion of super sparse cuts in this setting. In particular, given an instance of type $(C)$, we assume we partition the variable indices $n$ into $\mathcal{J}  = \{J_1, J_2, \dots, J_q\}$. For all $j \in [q]$ we add all possible cuts that have support on variables with index in $J_j$. Let $z^{S.S.}$ be the optimal objective function of the resulting LP with cuts. 

\begin{theorem}\label{thm:coversupersparse}
For any constant $M >0$, there exists a covering integer program $(C)$ and partition $\mathcal{J}:= \{J_1, J_2\}$ of $[n]$, such that the corresponding $z^{S.S.}$ and $z^I$ satisfies:
	\begin{eqnarray*}
		z^I > M \cdot z^{S.S.}.
	\end{eqnarray*}
\end{theorem}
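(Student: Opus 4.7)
The intuition is that any sparse cut supported on a single block $J_j$ must be valid for the projection of $P^I$ onto the variables of $J_j$. The plan is therefore to construct an instance where (a) the LP relaxation is arbitrarily weak thanks to a constraint with very uneven coefficients, yet (b) the projection of $P^I$ onto each block is all of $\R_+$, so that super sparse cuts cannot add anything beyond $P^{LP}$.

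Concretely, given $M>0$ I would pick $K > \max(M,1)$ and consider the covering instance
\begin{align*}
    \min \quad & x_1 + x_2 \\
    \text{s.t.} \quad & K x_1 + x_2 \geq 1, \\
                     & x_1, x_2 \in \Z_+,
\end{align*}
with the partition $\mathcal{J} = \{\{1\},\{2\}\}$. By direct inspection one has $z^I = 1$ (attained at $(1,0)$ or $(0,1)$) and $z^{LP} = 1/K$ (attained at $(1/K, 0)$, which is LP-feasible because $K > 1$).

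The substantive step is computing $P^I$ and checking its projections. I would argue that $P^I = \{x \in \R_+^2 : x_1 + x_2 \geq 1\}$: the integer feasible points are exactly $\Z_+^2 \setminus \{(0,0)\}$ (since $K \geq 1$), and their convex hull equals the displayed polyhedron because its extreme points $(1,0)$ and $(0,1)$ are both feasible and its recession directions lie in $\R_+^2$, while conversely $x_1+x_2\geq 1$ is valid for every feasible integer point. Projecting $P^I$ onto either coordinate then gives all of $\R_+$: for any $x_j \geq 0$, setting the other coordinate to $\max(0, 1-x_j)$ lands in $P^I$. Hence every sparse cut supported on $\{x_j\}$ is implied by $x_j \geq 0$, which is already contained in $P^{LP}$; so $P^{S.S.} = P^{LP}$ and $z^{S.S.} = 1/K$.

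Combining the three computations, $z^I/z^{S.S.} = K > M$, which is the desired bound. The only real obstacle is the projection step, but for this tiny instance it is essentially a one-line check. Sending $K \to \infty$ further shows that the ratio can be made unbounded using a single constraint on only two variables, illustrating how different the covering setting is from the packing setting analyzed earlier.
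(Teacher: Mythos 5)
Your proof is correct, but it takes a genuinely different and much more elementary route than the paper. The paper builds its tight instance out of the classical Lov\'asz set-cover integrality-gap example $SSC(q)$ (with $2^q$ variables, all $0$--$1$ data, and IP/LP gap at least $q/2$), duplicates the variables to form two blocks, and then argues that the super sparse closure collapses to the LP relaxation because each block's projection of $P^I$ is the full box $[0,1]^{2^q}$. You instead obtain the gap from a single constraint $Kx_1 + x_2 \ge 1$ with one large coefficient, and the collapse of the super sparse closure from the observation that the single-coordinate projections of $P^I$ are all of $\R_+$; this is exactly the mechanism of the paper's Observation~\ref{prop:fund}, applied to a two-variable instance. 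All of your computations check out: $z^I = 1$, $P^I = \{x \in \R_+^2 : x_1 + x_2 \ge 1\}$, both projections equal $\R_+$, hence $P^{S.S.} = P^{LP}$ and $z^{S.S.} = 1/K$, giving ratio $K > M$. What the paper's heavier construction buys is that the unbounded gap is achieved with $0$--$1$ constraint data, so the failure of super sparse cuts is combinatorial rather than driven by a numerically extreme coefficient; your construction shows that if arbitrary rational nonnegative data is allowed (as the statement of $(\textup{C})$ permits), the theorem is essentially a one-line consequence of the fact that singleton projections of a covering integer hull carry no information. Both are complete proofs of the statement as written.
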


Note that super sparse cuts may have support that are strict subsets of the support on the constraints of the formulation. Theorem \ref{thm:coversupersparse} suggests that such cutting-planes in the worst case will not produce good bounds for covering problems. 

It turns out that in order to analyze sparse cutting-planes for covering problems, the interesting case is when their support is at least the support of the constraints of the original formulation. Moreover, we need to work with a graph that is a ``dual'' of $\Gp$, namely it acts on the \emph{rows} of the problem instead of columns. For the matrix $A$, let $A_i$ be the $i^{\textrm{th}}$ row. We let $\supp(A_i) \subseteq [n]$ be the set of variables which appear in the $i^{\textrm{th}}$ constraint, that is $\supp(A_i) := \{j \in [n]\,|\, A_{ij} \neq 0\}$.

\begin{definition}[Covering interaction graph of $A$] Consider the matrix $A \in \mathbb{Q}^{m \times n}$. Let $\mathcal{I} = \{I_1, I_2, \dots, I_p\}$ be a partition of index set of \textbf{rows} of $A$ (that is $[m]$). We define the \emph{covering interaction graph} $\Gc = (V, E)$ as follows:
\begin{enumerate}
\item There is a node $v_i \in V$ for every part $I_i \in \mathcal{I}$.
\item For all $v_i, v_j \in V$, there is an edge $(v_i, v_j) \in E$ if and only if there is a \textbf{column} of $A$ with non-zero entries in both parts $I_i$ and $I_j$, namely $\bigcup_{r \in I_i} \supp(A_r)$ intersects $\bigcup_{r \in I_j} \supp(A_r)$.

\end{enumerate}
\end{definition}

\begin{definition}[Row block-sparse closure]\label{defn:rowsparse}
Given the problem $\textup{(C)}$, let $\mathcal{I} = \{I_1, I_2, \dots, I_p\}$ be a partition of index set of rows of $A$ (that is $[m]$) and consider the covering interaction graph $\Gc = (V,E)$.
	\begin{enumerate}
		\item With slight overload in notation, for a set of nodes $S \subseteq V$ we say that inequality $\alpha x \le \beta$ is a \emph{sparse cut on $S$} if it is a sparse cut on the union of the support of the rows in $S$, namely $\alpha x \le \beta$ is a sparse cut on $\bigcup_{v_i \in S} \bigcup_{r \in I_i} \supp(A_r)$. The closure of these cuts is denoted by $P^{(S)} := P^{(\bigcup_{v_i \in S} \bigcup_{r \in I_i} \supp(A_r))}$.
		\item Given a collection $\V$ of subsets of the vertices $V$ (the \emph{row support list}), we use $P^{\V,C}$ to denote the closure obtained by adding all sparse cuts on the sets in $\V$'s, namely
		\begin{align*}
			P^{\mathcal{V}, C} := \bigcap_{S \in \V} P^{(S)}.
		\end{align*}  
	Moreover, we define the optimum value over the row block-sparse closure $$z^{\mathcal{V}, C} := \textup{min} \left\{c^Tx \,|\, x \in P^{\mathcal{V},C}\right\}.$$
	\end{enumerate}
\end{definition}


\begin{example}[Two-stage stochastic problem: $\Gc$, weak specific-scenario cuts]\label{ex:stoccover}
Given a two-stage covering stochastic problem with $k$ second stage realizations, we partition the rows into $k$ blocks (each block consists of constraints between first stage variables only or first stage variables and variables corresponding to one particular realization). So we have a graph $\Gc$ with $V = \{v_1, v_2, \dots, v_{k}\}$ which is a clique. Moreover, if we consider the closure corresponding to the row support list $\mathcal{V} = \{ \{v_1\}, \{v_2\}, \{v_3\}, \dots, \{v_k\}\}$, the cuts are quite similar to specific-scenario cuts (although potentially weaker, since the supports of inequalities could possibly be strictly smaller than those allowed in the ``specific-scenario cuts" in Section \ref{results:packing}). Therefore we call this closure, the weak specific-scenario closure.
\end{example}

We now present the main result of this section. In particular, we present a worst-case upper bound on $\frac{z^I}{z^{\mathcal{V}, C}}$ that is independent of the data $A$, $b$, $c$, and depends only on $\Gc$ and the choice of the row support list $\mathcal{V}$. We remind the reader that given a graph $G$ and collection $\mathcal{V}$ of its vertices, $\bar{\eta}^{\mathcal{V}}(G)$ is the mixed chromatic number with respect to $\mathcal{V}$ (see Definition~\ref{defn:graph theoretic}).

\begin{theorem}\label{thrm:covering}
	Consider a covering integer programming as defined in $(\textup{C})$. Let $\I \subseteq 2^{[m]}$ be a partition of the index set of rows of $A$ and let $G = \Gc = (V,E)$ be the covering interaction graph of $A$. Then for any sparse cut support list $\V \subseteq 2^V$ we have
	\begin{eqnarray*}
	z^{\mathcal{V}, C} \geq \frac{1}{\bar{\eta}^{\mathcal{V}}(G)}\cdot z^{I}.
	\end{eqnarray*}
\end{theorem}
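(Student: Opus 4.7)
The plan is to show that for any $x^* \in P^{\V,C}$ one can construct $\hat{x} \in P^I$ with $c^T \hat{x} \le k \, c^T x^*$, where $k = \bar{\eta}^{\V}(G)$. Since $z^I = \min_{x \in P^I} c^T x$, this immediately yields $z^I \le c^T \hat{x} \le k \, c^T x^* = k \, z^{\V,C}$, which is the theorem.

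First I would fix a mixed stable set cover $\M^1, \ldots, \M^k$ of $V$ subordinate to $\V$ realizing $k = \bar{\eta}^{\V}(G)$, and for each $T$ appearing in some $\M^i$ fix $S_T \in \V$ with $T \subseteq S_T$. By the definition of $P^{\V,C}$, the inclusion $\proj_{N_{S_T}}(x^*) \in \proj_{N_{S_T}}(P^I)$ holds, so there is $w^{i,T} \in P^I$ with $w^{i,T}_j = x^*_j$ for every $j \in N_{S_T}$. The key structural property I would exploit is that within a single $\M^i$, distinct $T, T' \in \M^i$ satisfy $N_T \cap N_{T'} = \emptyset$, because the ``no edges'' condition of the mixed stable set in $G = \Gc$ is precisely the absence of shared columns between the two row blocks.

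I would then set $\hat{x}^i := \sum_{T \in \M^i} w^{i,T}$ and $\hat{x} := \sum_{i=1}^k \hat{x}^i$. For covering problems with $A, b \ge 0$, the integer hull $P^I$ is closed under addition of $\R^n_+$: if $y \in P^I$ and $\delta \ge 0$ then $y + \delta \in P^I$, since every integer feasible point used in a convex representation of $y$ remains feasible after any coordinate is increased (as $A \ge 0$). In particular $P^I$ is closed under nonnegative sums, so $\hat{x} \in P^I$.

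The main obstacle is proving the cost bound $c^T \hat{x} \le k \, c^T x^*$. Writing $\hat{x}_j = \sum_{(i,T)} w^{i,T}_j$, I split the pairs according to whether $j \in N_T$. The ``inside'' contributions equal $a_j x^*_j$ with $a_j \le k$ thanks to the disjointness of the $N_T$'s within each $\M^i$ (at most one $T \in \M^i$ contains any given $j$), so $\sum_j c_j a_j x^*_j \le k \, c^T x^*$. The ``outside'' contributions $\sum_{(i,T) : j \notin N_T} w^{i,T}_j$ must fit into the remaining budget $k c^T x^* - \sum_j c_j a_j x^*_j \ge 0$. For this I would pick each $w^{i,T}$ to minimize cost outside $N_T$ within $P^I$ and use the sparse cut on some $S_{T'}$ with $j \in N_{T'}$ (which exists since $\M^1, \ldots, \M^k$ cover $V$) to bound these extension values componentwise by $x^*_j$; this final accounting, transferring the ``unused'' $(k - a_j)$ slack to pay for the extensions, is the delicate technical step.
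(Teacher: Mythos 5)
Your overall skeleton --- cover $V$ by $k = \bar{\eta}^{\V}(G)$ mixed stable sets, use the disjointness of the column supports of distinct sets within one mixed stable set, assemble a point of $P^I$ from pieces indexed by these sets, and charge its cost against $k\, c^T x^*$ --- matches the paper's. The gap is in your choice of building blocks. You take $w^{i,T}$ to be an arbitrary point of $P^I$ agreeing with $x^*$ on $N_{S_T}$, and you must then control $\sum_{(i,T):\, j \notin N_T} w^{i,T}_j$ at every coordinate $j$. This does not close: (i) a point of $P^I$ extending $x^*|_{N_{S_T}}$ need not satisfy $w^{i,T}_j \le x^*_j$ outside $N_{S_T}$ --- $x^*$ itself need not lie in $P^I$, and even the cheapest extension can be forced to be large on the remaining coordinates; and (ii) even granting such a componentwise bound, the number of pairs $(i,T)$ with $j \notin N_T$ can greatly exceed the available slack $k - a_j$ (in your notation), since each $\M^i$ contributes at most one pair with $j \in N_T$ but \emph{all of its other sets} as pairs with $j \notin N_T$, so the budget transfer you describe fails. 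You flag this final accounting as delicate, but it is not merely delicate: the approach via extensions of $x^*$ does not work.

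The paper repairs exactly this point by choosing different building blocks. For each set $M$ in a mixed stable set it takes $x^{(M)}$, an \emph{optimal solution of the covering problem projected onto} the variables $\usupp(M)$, and extends it by \emph{zeros} to the full space. Two facts then do all the work. First, because every row $r$ in a block of $M$ has $\supp(A_r) \subseteq \usupp(M)$, the zero-extension still satisfies those rows (Observation~\ref{obs:feasCov}); taking the pointwise maximum (your nonnegative sum would also serve, since $A \ge 0$ makes $P^I$ upward closed) over all sets in all $k$ mixed stable sets covers every row block and yields a feasible point, so $z^I$ is at most its cost. Second, there are no ``outside'' contributions at all, and the cost of each piece is bounded using $x^*|_{\usupp(M)} \in P^I|_{\usupp(M)}$ together with the \emph{optimality} of $x^{(M)}$ over that projection, giving $(c|_{\usupp(M)})^T x^{(M)} \le (c|_{\usupp(M)})^T(x^*|_{\usupp(M)})$; summing over the disjoint sets of one mixed stable set gives at most $c^T x^* = z^{\V,C}$, and over all $k$ of them gives $k\, z^{\V,C}$. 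If you replace your $w^{i,T}$ by these zero-extended projected optima, your argument goes through.
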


We make a few comments regarding Theorem \ref{thm:coversupersparse}:
\begin{enumerate}
\item While the result of Theorem~\ref{thrm:covering} for covering-type IPs is very ``similar" to the result of Theorem~\ref{thm:packing} for packing-type of IPs, the key ideas in the proofs are different. 
\item Like the previous discussion in Section \ref{results:packing}, the chromatic numbers is small for graphs with small max degree. In fact, using Brook's Theorem~\cite{brooks41}, we can obtain a result very similar to Corollary \ref{cor:brooks} for the covering case as well. 
\item The result of Theorem \ref{thm:coversupersparse} holds even if upper bounds are present on some or all of the variables (in this case, we also need to assume that the instance is feasible). 
\end{enumerate}

Consider the case of two-stage covering stochastic problem with $K$ scenario and $\mathcal{I}$ as defined in Example \ref{ex:stoccover}. Since $\Gc$ is a clique, its chromatic number is $K$. Therefore we obtain the following corollary of Theorem \ref{thrm:covering}.

\begin{corollary}[Strength of weak specific-scenario cuts for covering stochastic problems]\label{cor:stoccover}
Consider a two-stage covering stochastic problem for $K$ scenario. Let $z^{*}$
be the optimal objective value obtained after adding all weak specific-scenario cuts. Then 
\begin{eqnarray*}
z^{*} \geq \frac{1}{K} z^{I}.
\end{eqnarray*}
\end{corollary}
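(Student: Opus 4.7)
The plan is to derive the corollary directly from Theorem~\ref{thrm:covering} by computing the mixed chromatic number $\bar{\eta}^{\mathcal{V}}(\Gc)$ for the covering interaction graph and row support list associated with the weak specific-scenario closure. As laid out in Example~\ref{ex:stoccover}, the row partition $\mathcal{I} = \{I_1, \ldots, I_K\}$ groups the constraints by scenario and the row support list is $\mathcal{V} = \{\{v_1\}, \ldots, \{v_K\}\}$; by construction this gives $z^{*} = z^{\mathcal{V},C}$, so the entire task reduces to pinning down $\bar{\eta}^{\mathcal{V}}(\Gc)$.

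The first step is to verify that $\Gc$ is the complete graph on $K$ vertices. Every block $I_i$ contains the scenario-$i$ coupling constraints, which involve both the first-stage variables $y$ and the scenario-$i$ second-stage variables $z^i$; in particular $\bigcup_{r \in I_i} \supp(A_r)$ always contains the indices of the first-stage variables (irrespective of how the first-stage-only rows are distributed among the blocks). Consequently, any pair of distinct blocks shares at least one first-stage column whose nonzero entries lie in both, and so by the definition of $\Gc$ there is an edge between every pair $v_i, v_j$, making $\Gc$ a clique on $K$ vertices. The second step is to compute $\bar{\eta}^{\mathcal{V}}(\Gc)$ when $\mathcal{V}$ consists of singletons. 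By Definition~\ref{defn:graph theoretic}, each set inside a mixed stable set $\mathcal{M}$ subordinate to $\mathcal{V}$ must itself be a singleton (being contained in some $\{v_i\}$), and condition~(3) forbids any two distinct singletons in $\mathcal{M}$ from being connected by an edge. Since $\Gc$ is a clique, every pair of distinct vertices is connected, so $\mathcal{M}$ contains at most one nonempty singleton and therefore covers at most one vertex of $V$. Hence at least $K$ mixed stable sets are needed to cover all $K$ vertices, a bound attained by $\mathcal{M}^i = \{\{v_i\}\}$ for $i = 1, \ldots, K$. This gives $\bar{\eta}^{\mathcal{V}}(\Gc) = K$, and substituting into Theorem~\ref{thrm:covering} yields $z^{*} \geq \tfrac{1}{K} z^I$ as claimed.

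The only real obstacle is a small bookkeeping check: confirming that the identification between Definition~\ref{defn:rowsparse} of the row block-sparse closure with $\mathcal{V} = \{\{v_i\}\}_{i=1}^{K}$ and the ``weak specific-scenario cuts'' of Example~\ref{ex:stoccover} is the intended one, and noting that the first-stage-only constraints (however assigned to blocks) do not destroy the clique structure of $\Gc$. Once these are in hand, the corollary is essentially immediate from the structural bound of Theorem~\ref{thrm:covering}.
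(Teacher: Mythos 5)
Your proposal is correct and follows exactly the route the paper takes: it observes (as in Example~\ref{ex:stoccover}) that $\Gc$ is a clique on $K$ vertices for the scenario-based row partition, notes that the mixed chromatic number with respect to the singleton support list reduces to the ordinary chromatic number, which equals $K$ for a clique, and then invokes Theorem~\ref{thrm:covering}. The only difference is that you spell out the clique verification and the computation of $\bar{\eta}^{\mathcal{V}}(\Gc)$ explicitly, which the paper leaves as a one-line remark.
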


Next we prove that the bound presented in Corollary \ref{cor:stoccover} is tight (and therefore the result of Theorem \ref{thm:coversupersparse} is tight for $\Gc$ being a clique).

\begin{theorem}\label{thm:stoccovertight}
Let $z^{*}$ be the optimal objective value obtained after adding all weak specific-scenario cuts for a two-stage covering stochastic problem. Given any $\epsilon >0$ with $\epsilon <K$, there exists an instance of the covering-type two-stage stochastic problem with $K$ scenarios such that
\begin{eqnarray*}
z^{*} \leq \frac{1}{(K - \epsilon)} \cdot z^{I}.
\end{eqnarray*}
\end{theorem}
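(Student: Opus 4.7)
The plan is to exhibit an explicit covering-type two-stage stochastic instance with $K$ scenarios for which $z^I/z^* \to K$ as a parameter is tuned, thereby matching the bound of Corollary~\ref{cor:stoccover} up to the $\epsilon$ slack.

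First, recall from Example~\ref{ex:stoccover} that with the natural row partition assigning each scenario's constraints to its own block $I_i$, the covering interaction graph $\Gc$ is the clique $K_K$, whose chromatic number is $K$. Applied to this setting, Corollary~\ref{cor:stoccover} reads $z^* \ge z^I/K$, and it is exactly this inequality whose tightness we wish to certify.

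The construction strategy has two ingredients. The first is to choose the per-scenario constraints so that their integer hulls (in the variables appearing in that scenario's block, as in Definition~\ref{defn:rowsparse}) already equal the corresponding LP relaxations. This way, the weak specific-scenario cuts add nothing beyond the joint LP relaxation, and we get the convenient identity $z^* = z^{\textup{LP}}$. A canonical way to secure this is to use single-row 0/1 covering constraints in binary variables of the form $\sum_{j \in S_i} y_j + z^i \ge 1$, whose integer hulls are well-known to coincide with their LP relaxations. The second ingredient is then to choose the supports $S_i$ and the costs $c(y_j), c(z^i)$ so that the joint LP relaxation admits integrality gap approaching $K$: fractional first-stage weights of size $\Theta(1/K)$ cheaply satisfy all scenario covering constraints while any integer first-stage selection requires cost close to $K$ times larger. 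Combining the two gives $z^I \ge (K-\epsilon)\, z^{\textup{LP}} = (K-\epsilon)\, z^*$, which rearranges to the stated bound.

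The main obstacle will be realizing the second ingredient with only $K$ scenarios available: classical set-cover integrality-gap constructions need a number of constraints growing with the gap. I plan to get around this by allowing each scenario's block $I_i$ to contain several simple 0/1 covering rows rather than a single one, so that the rows of a classical set-cover instance with LP/IP gap $\to K$ are distributed among the $K$ blocks while still preserving the per-block ``integer hull equals LP relaxation'' property that secures the first ingredient. A large ground-set parameter then drives the gap arbitrarily close to $K$. The costs $c(z^i)$ are chosen large enough that the LP (and IP) prefer to use first-stage variables for coverage, so the set-cover gap translates directly into the desired ratio $z^I/z^* \ge K - \epsilon$. Finally, one verifies that projecting $P^I$ onto each scenario's variables indeed recovers the per-scenario LP relaxation (which requires checking that partial integer solutions of a single scenario always extend to full integer feasibility of the other scenarios), so no additional weak specific-scenario cut is generated beyond the stated single-row constraints.
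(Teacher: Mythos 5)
Your high-level frame is the right one, and it matches the paper's: exhibit an instance where (i) the weak specific-scenario closure does not improve on the LP relaxation (or at least admits a cheap fractional point), and (ii) the joint integer optimum is a factor $K-\epsilon$ more expensive. You also correctly identify the central obstacle --- that a set-cover LP with few rows cannot have a large integrality gap, so each scenario block must carry many rows. The problem is that your plan for overcoming this obstacle is exactly the part that is missing, and the specific route you propose does not go through as stated.

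The critical unsupported claim is that you can distribute the rows of a ``classical set-cover instance with LP/IP gap $\to K$'' among $K$ blocks ``while still preserving the per-block integer hull equals LP relaxation property.'' For a \emph{single} row $\sum_{j\in S} y_j + z^i \ge 1$ this integrality is standard, but a block containing \emph{several} 0/1 covering rows is a general set-cover system, and such systems are precisely the ones with integrality gaps; nothing guarantees the property survives, and for the rows of a classical gap instance it will not (that is what makes it a gap instance). Moreover, what the weak specific-scenario closure actually requires (Definition~\ref{defn:rowsparse} and Observation~\ref{prop:fund}) is that the LP optimum's restriction lie in the projection of the \emph{full} integer hull $P^I$ onto the block's variables; you correctly note this needs both an extension argument (which your $z^i$ variables supply, as the paper's $y_k$ do) and per-block integrality (which you assert but do not construct). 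The paper resolves this with a bespoke construction rather than a classical one: each scenario's $n^n$ rows are built from a product set system $\G^1,\dots,\G^n$ on $[n^n]$ (Lemma~\ref{lemma:planesPart}) so that a 0/1 solution satisfies scenario $k$ iff it contains an entire block of a designated partition $\F_k$ of the $x$-variables (Lemmas~\ref{lemma:coveringCover} and~\ref{lemma:charPk}); the partitions $\F_1,\dots,\F_K$ come from an affine design, so their blocks pairwise intersect in at most one element, forcing any joint integer solution to cost at least $Kn-K^2$ (Lemma~\ref{lemma:LBcovIP}) while the uniform point $\frac{1}{n}\ones$ survives every scenario's closure as an average of $n$ integer solutions (Lemma~\ref{lemma:UBcovSS}). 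Your proposal contains no analogue of this mechanism for making the scenarios' integer solutions mutually incompatible, and no argument that the fractional point you intend to use lies in each block's projected integer hull. As written, the proposal defers the entire combinatorial content of the theorem to an unverified step, so it is not yet a proof.
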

The proof of Theorem \ref{thm:stoccovertight} is perhaps the most involved in this paper, as the family of instances constructed to prove the above theorem are significantly complicated. 

All proofs of the above results are presented in Section~\ref{sec:proofcovering}.


\subsection{``Packing-type" problem with arbitrary $A$ matrix}\label{results:genpacking}
Up until now we have considered packing and covering problems. We now present results under much milder assumptions. In particular, we consider problem ($\textup{P}$) with arbitrary matrix $A \in \mathbb{Q}^{m \times n}$ instead of a non-negative matrix (and $b$ is also not assumed to be non-negative). The assumptions we therefore make in this section are: $c$ is a non-negative vector, the variables are non-negative and the objective is of the maximization-type as in ($\textup{P}$).

We use the same definition of sparse-cutting planes as for the packing instances considered in Section \ref{results:packing}. All other notation used is also the same as in Section \ref{results:packing}.

As it turns out, even in this significantly more general case, it is possible to obtain tight data-independent bounds on the quality of sparse-cutting-planes. In order to present this result we introduce the notion of \emph{corrected} average constraint density. The reason to introduce this notion is the following: the strength of cuts in this case is determined by the average density, as long as the cuts cover all the variables. Based on this, the corrected average density captures the best bound one can obtain using a given support list. 

\begin{definition}[Corrected average density]\label{defn:corraverden}
Let $\mathcal{V} = \{V^1, V^2, \dots, V^t\}$ be the sparse cut support list. For any subset $\tilde{\V} =\{V^{u_1}, V^{u_2}, ..., V^{u_k}\} \subseteq \mathcal{V}$ define its \emph{density} as 
$$D(\tilde{\V}) = \frac{1}{k}\sum_{i = 1}^k|V^{u_i}|.$$
We define the corrected average density  of $\mathcal{V}$ (denoted as $D_{\mathcal{V}}$) as maximum value of $D(\tilde{\V})$ over all $\tilde{\V}$'s that cover $V$, that is, $\bigcup_{V' \in \tilde{\V}} V' = V$.
\end{definition}

Note that $D_{\mathcal{V}} \geq 1$ for any choice of sparse cut support list $\mathcal{V}$, and for the trivial list $\V = \{V(\Gp)\}$ that allows fully dense cuts we have $D_{\V} = |V(\Gp)|$. The following is the main result of this section.

\begin{theorem}\label{thm:supersparseNS}
Let ($\textup{P}$) be defined by an arbitrary $A \in \mathbb{Q}^{m \times n}$, $b \in \mathbb{Q}^m$, $c\in \mathbb{Q}^n_{+}$. Let $\mathcal{J}$ be a partition of the index set of columns of $A$ (that is $[n]$). Let $\Gp = (V,E)$ be the packing interaction graph of $A$ and let $\mathcal{V}$ be the sparse cut support list. If the instance is feasible, then:
\begin{eqnarray*}
z^{\mathcal{V}, P} \leq \left(|V| + 1 - D_{\mathcal{V}} \right) \cdot z^I.
\end{eqnarray*}
\end{theorem}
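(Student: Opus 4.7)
The plan is to bound $z^{\V,P}$ by first localizing the contribution of each variable block to $z^I$, and then aggregating these local bounds via an explicit small cover of $V$ by sets in $\V$.

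First I would establish the key localization: for every $x^* \in P^{\V,P}$ and every $S \in \V$, the restricted objective satisfies $c_S^T x^*_S \leq z^I$, where $c_S$ and $x^*_S$ denote the restrictions to the variables in $\bigcup_{v_j \in S} J_j$. Since $x^*$ satisfies every valid inequality of $P^I$ with support in $S$, the restriction $x^*_S$ lies in $\proj_S(P^I)$, so there exists $\hat{x} \in P^I$ with $\hat{x}_S = x^*_S$. Because $c \geq 0$ and $\hat{x} \geq 0$, we get $c_S^T x^*_S = c_S^T \hat{x}_S \leq c^T \hat{x} \leq z^I$.

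Next I would construct a cover of $V$ by sets in $\V$ of size at most $|V|+1-D_{\V}$. Let $S^* \in \V$ be a set of maximum size $L = |S^*|$. Feasibility of $z^{\V,P}$ forces $\V$ to cover $V$, so for each vertex $v_j \in V \setminus S^*$ we can pick some $S_{v_j} \in \V$ containing $v_j$. The collection $\tilde{\V} := \{S^*\} \cup \{S_{v_j} : v_j \in V \setminus S^*\}$ then covers $V$ and has size at most $1 + (|V| - L)$. Any cover of $V$ by sets in $\V$ has density at most $L$ since each of its sets has size at most $L$, so $D_{\V} \leq L$ and thus $|\tilde{\V}| \leq 1 + |V| - L \leq |V|+1-D_{\V}$. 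Setting $a_j := c_{J_j}^T x^*_{J_j} \geq 0$ and $d_j := |\{S \in \tilde{\V} : v_j \in S\}| \geq 1$ (by the cover property), we conclude
\begin{align*}
c^T x^* \;=\; \sum_j a_j \;\leq\; \sum_j d_j a_j \;=\; \sum_{S \in \tilde{\V}} c_S^T x^*_S \;\leq\; |\tilde{\V}| \cdot z^I \;\leq\; (|V|+1-D_{\V})\, z^I.
\end{align*}

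The main subtlety is the localization step: since $A$ now has arbitrary sign, $P^I$ is neither downward closed nor translation friendly, so the bound $c_S^T x^*_S \leq z^I$ is not automatic and relies essentially on $c \geq 0$ and $x \geq 0$ to dominate the $S$-restricted portion of $c^T \hat{x}$ by the full inner product. Once this is in hand, the rest is a counting argument exploiting the elementary inequality $D_{\V} \leq L$, instantiated by the greedy ``one large set plus one more set per remaining vertex'' cover.
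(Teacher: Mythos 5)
Your proof is correct, and while its first half coincides in substance with the paper's argument, the aggregation step is genuinely different. The localization claim --- that $x^*\in P^{\V,P}$ and $S\in\V$ imply $(c|_{\phi(S)})^T(x^*|_{\phi(S)})\le z^I$ --- is exactly what the paper extracts from Observation~\ref{prop:fund} together with the nonnegativity of $c$ and of the points of $P^I$ (its $x^{(S)}$ is a maximizer of the restricted objective over the full $P^I$, precisely because, as you note, extension by zeros is unavailable for arbitrary $A$). Where you diverge is in how these per-set bounds are combined: the paper fixes the density-maximizing cover $\tilde\V=\{V^{u_1},\dots,V^{u_k}\}$ realizing $D_{\V}$, proves for each $V^{u_i}$ the decomposition $z^{\V,P}\le c^Tx^{(V^{u_i})}+\sum_{v\notin V^{u_i}}c^Tx^{(v)}$, sums the $k$ inequalities, and converts the count $k+\sum_v\miss(v)$ into $k\left(|V|+1-D_{\V}\right)$ via a double-counting identity; you instead take a single greedy cover (one maximum-size set $S^*$ plus one set per remaining vertex), absorb the over-counting through $d_j\ge 1$ and $a_j\ge 0$, and close with the elementary inequality $D_{\V}\le L:=\max_{S\in\V}|S|$. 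Your route is shorter and actually yields the formally stronger statement $z^{\V,P}\le |\tilde\V|\cdot z^I$ for \emph{any} cover $\tilde\V\subseteq\V$ of $V$, hence $z^{\V,P}\le(|V|+1-L)\cdot z^I\le(|V|+1-D_{\V})\cdot z^I$; the paper's averaging buys nothing beyond matching the exact form of the stated constant (both bounds agree on the tight star instances). Two minor imprecisions worth fixing: the existence of a cover of $V$ by sets of $\V$ does not follow from feasibility of the instance as you assert --- it is tacitly presupposed by the definition of $D_{\V}$, in the paper as well as in your argument; and the phrase ``there exists $\hat{x}\in P^I$ with $\hat{x}_S=x^*_S$'' is where Observation~\ref{prop:fund} and the convexity of $P^I$ are being invoked, so it deserves an explicit citation rather than being stated as obvious.
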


Let us see some consequences of Theorem \ref{thm:supersparseNS}. Since $D_{\V} \ge 1$ we obtain the following result. 

\begin{corollary}\label{cor:supersparseSS}
Given ($\textup{P}$), with arbitrary $A \in \mathbb{Q}^{m \times n}$, $b \in \mathbb{Q}^m$, $c\in \mathbb{Q}^n_{+}$. Let $\mathcal{J}$ be a partition of the index set of columns of $A$ (that is $[n]$). If the instance is feasible, then:
\begin{eqnarray*}
z^{\mathcal{V}, P} \leq |V| \cdot z^I.
\end{eqnarray*}
\end{corollary}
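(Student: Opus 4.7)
The plan is to derive this as an immediate consequence of Theorem \ref{thm:supersparseNS}, which gives the stronger bound $z^{\mathcal{V},P} \le (|V| + 1 - D_{\mathcal{V}})\, z^I$. So the whole argument boils down to showing that $D_{\mathcal{V}} \ge 1$ (together with a check that $z^I \ge 0$, which is needed before multiplying through by it).

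First I would check that $z^I \ge 0$. Feasibility of the instance implies $P^I \neq \emptyset$, and since $c \ge 0$ and all variables are required to be non-negative, any feasible integer point has non-negative objective value; hence $z^I \ge 0$.

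Next I would verify $D_{\mathcal{V}} \ge 1$. By Definition \ref{defn:corraverden}, this quantity is the maximum of $D(\tilde{\mathcal{V}})$ over collections $\tilde{\mathcal{V}} \subseteq \mathcal{V}$ that cover $V$. Assuming $\mathcal{V}$ itself covers $V$ (otherwise no sparse cuts touch some block of variables and $z^{\mathcal{V},P}$ coincides with a relaxation where that block is unconstrained by cuts, a degenerate setup outside the intended scope), take any covering sub-collection $\tilde{\mathcal{V}} = \{V^{u_1}, \ldots, V^{u_k}\}$. Each $V^{u_i} \subseteq V$ is a non-empty subset of vertices (it corresponds to an actual cut support), so $|V^{u_i}| \ge 1$ for every $i$, and therefore
\begin{equation*}
    D(\tilde{\mathcal{V}}) \;=\; \frac{1}{k}\sum_{i=1}^{k} |V^{u_i}| \;\ge\; 1.
\end{equation*}
Taking the maximum gives $D_{\mathcal{V}} \ge 1$.

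Finally I would combine the two. From $D_{\mathcal{V}} \ge 1$ we get $|V| + 1 - D_{\mathcal{V}} \le |V|$, and since $z^I \ge 0$ multiplication preserves the inequality, so Theorem \ref{thm:supersparseNS} yields
\begin{equation*}
    z^{\mathcal{V},P} \;\le\; \bigl(|V| + 1 - D_{\mathcal{V}}\bigr)\, z^I \;\le\; |V|\cdot z^I,
\end{equation*}
which is the claim. There is no real obstacle here: the only subtlety is the implicit assumption that $\mathcal{V}$ covers $V$ (so that $D_{\mathcal{V}}$ is well-defined as a maximum over a non-empty set); once that is granted, the bound $D_{\mathcal{V}} \ge 1$ is immediate from the fact that each support set contains at least one vertex, and the corollary follows from Theorem \ref{thm:supersparseNS} with no further work.
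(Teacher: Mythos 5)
Your proposal is correct and follows exactly the paper's route: the paper also obtains this corollary as an immediate consequence of Theorem \ref{thm:supersparseNS} via the observation that $D_{\mathcal{V}} \geq 1$. Your extra care in verifying $z^I \geq 0$ (so that multiplying the inequality through is legitimate) and in noting the implicit assumption that $\mathcal{V}$ covers $V$ is sound but not a departure from the paper's argument.
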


It turns out that the bound in Corollary \ref{cor:supersparseSS} is tight when $G^P_{A,\mathcal{J}}(V, E)$ is a star.

\begin{theorem}[Strength of super sparse cuts for two-stage packing-type problem with arbitrary  $A$]\label{thm:SSarbitAstartight}
For every $\epsilon >0$, there exists $A \in \mathbb{Q}^{m \times n}$, $b \in \mathbb{Q}^m$, $c\in \mathbb{Q}^n_{+}$ such that $\Gp$ is a star and
\begin{eqnarray*}
z^{S.S.} \geq |V| \cdot z^I -\epsilon.
\end{eqnarray*} 
\end{theorem}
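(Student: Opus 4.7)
The plan is to exhibit, for any integer $k \geq 1$ and slack $\delta \in (0,1)$, a single packing-type instance in which the super sparse closure achieves objective value $k+1-\delta$ while the integer optimum is $1$, and then set $\delta := \epsilon$. Concretely, I would partition the variables into a center block $J_0 = \{w, y_1, \ldots, y_k\}$ of $k+1$ binary variables and $k$ singleton leaf blocks $J_i = \{z_i\}$ with $z_i \in \{0,1\}$, so $|V| = k+1$. The constraints are the center-only inequality $\sum_{j=1}^{k} y_j \leq 1$, the star-edge family $z_i \leq k y_i$ (which uses the negative coefficient $-k$ and is the reason ``arbitrary $A$'' is needed), and a second star-edge family $w + z_i \leq 2 - \delta$, together with the standard bounds $w, y_j, z_i \in [0,1]$. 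The objective $c^T x = w + \sum_i z_i$ is non-negative, and reading off the supports of the constraints confirms that $\Gp$ is the star $K_{1,k}$.

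Second, I would show $z^I = 1$. Integrality turns $z_i \leq k y_i$ into $z_i \leq y_i$, and, since $\delta \in (0,1)$, turns $w + z_i \leq 2 - \delta$ into $w + z_i \leq 1$. Combined with $\sum_j y_j \leq 1$, any integer-feasible point either has $w = 1$ (forcing every $z_i = 0$) or has $w = 0$ with at most one active $y_{j^*} = 1$ enabling at most one $z_{j^*} = 1$; in either case $c^T x = w + \sum_i z_i \leq 1$, and $w = 1$ achieves equality.

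The heart of the argument is the lower bound $z^{S.S.} \geq k+1-\delta$, which I would prove by displaying the ``stitched'' point
\[
  x^* = \bigl(w,\; y_1,\ldots,y_k,\; z_1,\ldots,z_k\bigr) = \Bigl(1-\delta,\; \tfrac{1}{k},\ldots,\tfrac{1}{k},\; 1,\ldots,1\Bigr)
\]
and verifying $x^* \in P^{S.S.}$. LP feasibility is an immediate substitution: $\sum y_j = 1$, $z_i = k y_i = 1$, and $w + z_i = 2-\delta$ are all tight. The delicate step, and the main obstacle to locate when trying simpler constructions, is that the single-block cuts must not exclude $x^*$. Because the only center-only constraint beyond coordinate bounds is $\sum_j y_j \leq 1$, the $2(k+1)$ integer vertices projecting to $J_0$ generate exactly $\proj_{J_0} P^I = \{(w,y) \in [0,1]^{k+1} : \sum_j y_j \leq 1\}$, which contains $x^*_{J_0}$; and each $\proj_{J_i} P^I = \{0,1\}$ has convex hull $[0,1]$ containing $z_i = 1$. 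Hence no super sparse cut excludes $x^*$, so $z^{S.S.} \geq c^T x^* = k+1-\delta = |V| \cdot z^I - \delta$, and choosing $\delta := \epsilon$ yields the theorem. The conceptual point is that the extra center variable $w$ decouples the ``$+1$'' available in LP from the ``$k$'' generated by the leaves, while the star constraint $w + z_i \leq 1$ (in integers) forbids this combination in the IP.
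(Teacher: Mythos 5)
Your construction is correct and follows essentially the same strategy as the paper's: build an explicit star instance whose center block carries a cardinality constraint, show the integer optimum equals $1$, and exhibit a fractional point of value $|V|-\epsilon$ that survives every single-block projection test (Observation~\ref{prop:fund}). The paper's instance differs only in detail---it merges your $w$ and $y$ variables into a single center block governed by the equality $\sum_{i=1}^{K} x_i = 1$ and uses pairwise constraints $x_i + x_j \le 2-\epsilon$ in place of your $z_i \le k\,y_i$ coupling---but the verification is the same.
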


Now let us consider the case where the sparse cut support list $\V$ corresponds to the natural sparse closure, when $G^P_{A,\mathcal{J}}(V, E)$ is a star or a cycle.
Clearly in both these cases we have $D_{\mathcal{V}} = 2$. Therefore we obtain the following Corollary.

\begin{corollary}[Natural sparse cuts for two-stage packing-type problem with arbitrary $A$]\label{cor:nsparseSS}
Given ($\textup{P}$), with arbitrary $A \in \mathbb{Q}^{m \times n}$, $b \in \mathbb{Q}^m$, $c\in \mathbb{Q}^n_{+}$. Let $\mathcal{J}$ be a partition of the index set of columns of $A$ (that is $[n]$). Let $\Gp = (V,E)$ be the packing interaction graph of $A$ which is a star or a cycle. If the instance is feasible, then:
\begin{eqnarray*}
z^{N.S.} \leq (|V| - 1) \cdot z^I.
\end{eqnarray*}
\end{corollary}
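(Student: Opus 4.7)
The plan is to derive Corollary~\ref{cor:nsparseSS} as a direct consequence of Theorem~\ref{thm:supersparseNS}. Since that theorem already gives $z^{\mathcal{V},P} \le (|V|+1-D_{\mathcal{V}})\cdot z^I$ for the natural sparse support list $\mathcal{V} = \{V^1,\dots,V^m\}$, it suffices to establish the bound $D_{\mathcal{V}} \ge 2$ whenever $\Gp$ is a star or a cycle. Indeed, $D_{\mathcal{V}}\ge 2$ immediately yields $|V|+1-D_{\mathcal{V}} \le |V|-1$, which is the statement of the corollary. Thus the entire proof reduces to exhibiting a sub-collection $\tilde{\mathcal{V}}\subseteq \mathcal{V}$ that covers $V$ and whose average set size is at least $2$.

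To produce such a $\tilde{\mathcal{V}}$, I would exploit the correspondence between edges of $\Gp$ and rows of $A$. By Definition~\ref{defn:packgraph}, whenever $(v_a,v_b)$ is an edge of $\Gp$, there exists at least one row $i$ of $A$ with non-zero entries in both $J_a$ and $J_b$; for this row we have $\{v_a,v_b\}\subseteq V^i$, and in particular $|V^i|\ge 2$. For each edge $e$ of $\Gp$, pick one such row $r(e)$ and let $\tilde{\mathcal{V}} := \{V^{r(e)} : e\in E\}$. Then $\tilde{\mathcal{V}}\subseteq \mathcal{V}$, each set in $\tilde{\mathcal{V}}$ has cardinality at least $2$, and $\tilde{\mathcal{V}}$ covers $V$ as soon as the edges of $\Gp$ cover every vertex. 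Both cases at hand satisfy this vertex cover property trivially: a star with $|V|\ge 2$ has every leaf incident to the centre, and a cycle has every vertex incident to two edges. Hence $D(\tilde{\mathcal{V}})\ge 2$, giving $D_{\mathcal{V}} \ge 2$ and finishing the proof.

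I would also remark, for completeness, that for a star and for any cycle of length at least $4$ one actually has $|V^i|\le 2$ for every row $i$: a row with three blocks active in its support would create a triangle in $\Gp$, which neither graph contains. Consequently the sub-collection $\tilde{\mathcal{V}}$ constructed above has $D(\tilde{\mathcal{V}})$ exactly equal to $2$ in those cases, so the bound $|V|-1$ obtained through Theorem~\ref{thm:supersparseNS} is the strongest one this route can yield. For the triangle ($|V|=3$) one could conceivably have a single row with $V^i=V$, giving $D_{\mathcal{V}}=3$ and a slightly sharper bound of $|V|-2$, but this only strengthens the inequality claimed by the corollary. No substantial obstacle arises: the entire argument is combinatorial, relying only on the graph-theoretic translation of Definition~\ref{defn:packgraph} and the covering property of the edge set of a star or a cycle.
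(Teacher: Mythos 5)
Your proposal is correct and follows essentially the same route as the paper, which simply asserts that $D_{\mathcal{V}} = 2$ for a star or a cycle and then invokes Theorem~\ref{thm:supersparseNS}; you supply the justification (one row per edge gives a covering sub-collection with sets of size at least $2$, so $D_{\mathcal{V}} \ge 2$) that the paper leaves implicit. Your side remark that a triangle could have $D_{\mathcal{V}} = 3$ is a fair observation — the paper's ``clearly $D_{\mathcal{V}}=2$'' is slightly imprecise in that one case — but as you note this only strengthens the claimed bound.
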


We next show that the result of Corollary \ref{cor:nsparseSS} is tight for $\Gp$ being a star, which corresponds to two-stage packing-type problem with arbitrary $A$.

\begin{theorem}\label{thm:arbitANStight}
For every $t \in \mathbb{Z}_{++}$, there exists an instance of ($\textup{P}$), with arbitrary $A \in \mathbb{Q}^{m \times n}$, $b \in \mathbb{Q}^m$, $c\in \mathbb{Q}^n_{+}$, a partition of index set of columns $\mathcal{J}$ such that  $\Gp$ is a star with $ K + 1$ nodes and
\begin{eqnarray*}
z^{N.S.} = ((K+1) - 1) \cdot z^I.
\end{eqnarray*}
\end{theorem}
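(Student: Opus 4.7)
The plan is to give, for each $K \geq 2$ (with $K$ a function of $t$), an explicit packing MILP instance $(P)$ together with a matrix $A \in \mathbb{Q}^{m\times n}$ (exploiting negative entries, which is legal in this arbitrary-$A$ regime) and a partition $\mathcal{J}$ of the columns into $K+1$ blocks such that $G^{\textrm{pack}}_{A,\mathcal{J}}$ is a star with center $v_0$ and leaves $v_1,\ldots,v_K$, and for which $z^{N.S.} = K\cdot z^I$ holds with exact equality. Since Corollary~\ref{cor:nsparseSS} already gives the matching upper bound $z^{N.S.} \leq K\cdot z^I$, it suffices to exhibit an instance together with an explicit fractional solution in $P^{N.S.}$ attaining the value $K\cdot z^I$.

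The concrete construction I would propose is as follows. Take the center block $v_0$ to contain variables whose integer configurations encode an ``active leaf index'', and place a single binary variable $x_i$ in each leaf block $v_i$. Pair constraints on $(v_0,v_i)$ are written so that: at any integer value of the center variables, at most one leaf $x_i$ can be nonzero, forcing $z^I=1$; but simultaneously, each pair integer hull $\textrm{conv}(\mathrm{proj}_{\{v_0,v_i\}}(P^I))$ contains the point $(y^*, 1)$ for a common \emph{fractional} center-configuration $y^*$, obtained as a convex combination of two different integer center-configurations (one with $x_i=1$ and one with $x_i=0$). The use of negative coefficients is what makes this possible: it allows ``tight'' pair relations such as $x_i \leq \alpha y + \beta$ with $\alpha<0$, so that the pair integer hull has the right shape to admit $(y^*,1)$ fractionally while integrally ruling out coactivation.

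The verification then has two parts. First, I would enumerate the integer-feasible points of $P$, using the pair constraints and the $v_0$-only constraints to check directly that every integer-feasible point has objective at most $z^I$ (chosen to be $1$ by normalization); the pair structure forces integer solutions with more than one active $x_i$ to be inconsistent on the center block. Second, I would exhibit the explicit fractional point $x^\star = (y^*, 1, 1, \ldots, 1)$ and prove $x^\star \in P^{N.S.}$ by writing each projection $\mathrm{proj}_{\{v_0,v_i\}}(x^\star)$ as an explicit convex combination of projections of integer-feasible points of $P$. Computing $c^Tx^\star = K$ then yields $z^{N.S.} \geq K = K\cdot z^I$, and combined with Corollary~\ref{cor:nsparseSS} gives the exact equality.

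The main obstacle, already visible in the purely packing (non-negative $A$) case where only the weaker bound $(2\Delta-1)/\Delta$ of Theorem~\ref{thrm:nstree} is achievable, is the tension between ``each pair integer hull containing $(y^*,1)$'' and ``the global integer hull containing no point with $\sum_i x_i>1$.'' In a star graph any two integer solutions that agree on $v_0$ can be ``glued'' coordinatewise into a new integer solution, which normally destroys this separation. The construction must therefore arrange the center block so that the integer configurations witnessing $x_i=1$ in different pairs disagree on $v_0$, yet their \emph{convex hull} on the center variables contains the common fractional $y^*$; this is precisely what negative coefficients in $A$ enable, and tuning the coefficients to place $y^*$ as an exact (rather than limiting) convex combination is what converts the asymptotic tightness of the previous attempt into exact equality.
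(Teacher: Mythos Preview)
Your plan is essentially the paper's approach, and the verification strategy you outline (common fractional center $y^*$, show $(y^*,1,\ldots,1)\in P^{N.S.}$ via Observation~\ref{prop:fund}, then bound $z^I$ by inspecting integer witnesses) is exactly what the paper does.

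The one place your description slips: to put $(y^*,1)$ in the pair-$i$ integer hull you need \emph{both} integer witnesses to have the leaf variable equal to $1$, with different center configurations whose average is $y^*$---not ``one with $x_i=1$ and one with $x_i=0$'', which would only produce a fractional leaf coordinate.  Correspondingly, the center block cannot literally ``encode an active leaf index'' (a unique encoding would leave only one integer point with leaf $=1$ in each pair, forcing $y^*$ integral); what you actually need is that each leaf $i$ is allowed by a \emph{set} $W_i$ of center configurations, the $W_i$ are pairwise disjoint (so $z^I=1$), yet all the $W_i$ share the same barycenter $y^*$.  The paper's concrete realization is: center block $x\in\{0,1\}^K$, one leaf variable $y_k$ per leaf, objective $\sum_k y_k$, and for each $k$ the constraint $y_k=1\Leftrightarrow x\in\{e_k,\mathbf{1}-e_k\}$ (encoded as a rational polytope $P^I_k$); then $y^*=\tfrac12\mathbf{1}$ is witnessed in pair $k$ by $(e_k,1)$ and $(\mathbf{1}-e_k,1)$, and for $K\ge 3$ the witness sets $\{e_k,\mathbf{1}-e_k\}$ are pairwise disjoint so at most one $y_k$ can equal $1$ integrally.
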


All proofs of the above results are presented in Section \ref{sec:proofpackinggen}.


\section{Computational experiments}\label{sec:computational}

In this section, we present our computational results on the strength of natural sparse closure of pure binary IP. 

In Appendix $\ref{AppendixCutAlgo}$, we present the algorithm we implemented to estimate $z^{cut}$, the optimal objective function value of the natural sparse closure.

We describe the random instances we generate in section $\ref{sec:InsGen}$ and present the results in section $\ref{sec:CompRes}$. All the experiments have been carried out using CPLEX12.5.


\subsection{Instance generation}\label{sec:InsGen}
We generated two kinds of problems: two-stage stochastic programming instances and random-graph based instances. We first discuss how we generated the constraint matrix for both types of instances. Then, we discuss how we generated the right-hand side based on the constraint matrix and the objective function.

\subsubsection{Constraint matrix generation}

To simplify the presentation consider the case of packing instances. Covering instances are generated in the same way. 

First we generate the packing-type induced graph on $\texttt{nv}$ nodes. In case of the two-stage stochastic programming, the packing-type induced graph is a star with $\texttt{nv}$ nodes (i.e. $\texttt{nv} - 1$ realizations of the second stage). For the random-graph based instances, let $\texttt{p}$ (parameter) be the probability that an edge exists between any pair of nodes. As a disconnected induced graph implies that the original problem is decomposable, we accept connected graph only.

Next, given the packing-type induced graph, say $G= (V,E)$ (where $\texttt{nv} = |V|$), we construct a matrix with
that can be partitioned into $|E| \times |V|$ blocks with each block of size $\texttt{sqr} \times \texttt{sqr}$ (where $\texttt{sqr}$ is a parameter). Thus the constraint matrix has $|E| \times  \texttt{sqr}$ rows and $|V|\times \texttt{sqr}$ columns. The $(i,j)^{\textup{th}}$ block is all zeros if edge $i$ is not incident to node $j$. Else the $(i,j)^{\textup{th}}$ block is a randomly generated dense matrix: We assign each entry the distribution of $unif\{1, \texttt{M}\}$, where $\texttt{M}$ is a parameter. For packing-type with arbitrary matrix, we first generate the entry which follows $unif\{1, \texttt{M}\}$ and then with probability $0.5$, we multiply $-1$. (Thus, each column block has $\texttt{sqr}$ variables and there are $\texttt{sqr}$ rows with the same support of vertices).

\subsubsection{Right-hand side generation}
To guarantee that the instances generated are non-trivial, we follow the following steps: Randomly select $\texttt{p}_x$ (parameter) from the set of $\{0.2, 0.4,0.6,0.8\}$. A 0-1 vector $x\in \R^n$ is randomly generated where for all $j \in [n]$, $x_j \sim Bernoulli(\texttt{p}_x)$. A noise vector $\epsilon \in \R^m_{+}$ is randomly generated as: for all $i \in [m]$, $\epsilon_i \sim unif\{1, \texttt{M}_{\epsilon}\}$ ($\texttt{M}_{\epsilon}$ is a parameter). For a covering instance, $b = Ax - \epsilon$. Otherwise $b = Ax + \epsilon$.

\subsubsection{Objective function generation}

Every entry in the objective function follows the distribution of $unif\{1, \texttt{ObjM}\}$. ($\texttt{ObjM}$ is a parameter.)

\subsection{Computational results}\label{sec:CompRes}
\subsubsection{Results for two-stage stochastic programming}
We set the number of second-stage scenarios equals to $10$. We set $\texttt{sqr} = 20$, that is the number of variables for both first-stage and second-stage scenarios equals to 20. Also we set $\texttt{M} =\texttt{M}_{\epsilon} = \texttt{objM} = 10$. We generated $50$ instances for each of the three types of problem.

The result for packing-type problem, covering-type problem, and packing-type problem with arbitrary matrix is shown in Table $\ref{tab:c1}$, Table $\ref{tab:c2}$, and Table $\ref{tab:c3}$ respectively. 
\begin{table}[h]
\centering
\caption{Two-stage Packing SP}
\label{tab:c1}
\begin{tabular}{|l|l|}
\hline Avg. $z^{cut}$/$z^{IP}$ & Theoretical bound on  $z^{cut}$/$z^{IP}$ \\ \hline
1.00038 & 1.9
\\ \hline
\end{tabular}
\end{table}

\begin{table}[h]
\centering
\caption{Two-stage Covering SP}
\label{tab:c2}
\begin{tabular}{|l|l|}
\hline Avg. $z^{IP}/z^{cut}$ & Theoretical bound on  $z^{IP}/z^{cut}$ \\ \hline
1.009 & 10
\\ \hline
\end{tabular}
\end{table}

\begin{table}[h]
\centering \caption{Two-stage Arbitrary Packing SP}
\label{tab:c3}
\begin{tabular}{|l|l|}
\hline Avg. $z^{cut}$/$z^{IP}$ & Theoretical bound on  $z^{cut}$/$z^{IP}$ \\ \hline
1 & 10
\\ \hline
\end{tabular}
\end{table}
\subsubsection{Results for Random Graph based Instances}
We set $\texttt{nv} = 10$, $\texttt{p} = 0.2$, $\texttt{sqr} = 20$, $\texttt{M} =\texttt{M}_{\epsilon} = \texttt{objM} = 10$. For a given random graph we generated $10$ random instances, and therefore we generated $50$ instances for each of the three types of problem. The result for packing-type problem, covering-type problem, and packing-type problem with arbitrary matrix is shown in Table $\ref{tab:c4}$, Table $\ref{tab:c5}$, and Table \ref{tab:c6} respectively.

\begin{table}[H]
\centering \caption{Random Graph Based tests on Packing Problems} \label{tab:c4}
\begin{tabular}{|l|l|l|}
\hline Graph Name & Avg. $z^{cut}$/$z^{IP}$ & bound of $z^{cut}$/$z^{IP}$ \\
\hline Ind 1& 1.0009     & 1.8               \\
\hline Ind 2      & 1.0028 & 1.75              \\
\hline Ind 3      & 1.0053     & 1.667 \\
\hline Ind 4      & 1.0006     & 1.75              \\
\hline Ind 5      & 1.003     & 2                \\
\hline
\end{tabular}
\end{table}

\begin{table}[H]
\centering \caption{Random Graph Based tests on Covering Problems} \label{tab:c5}
\begin{tabular}{|l|l|l|}
\hline Graph Name & Avg. $z^{IP}/z^{cut}$ & bound of  $z^{IP}/z^{cut}$ \\
\hline Ind 1 & 1.0045     & 2                 \\
\hline Ind 2      & 1.0046 & 3                 \\
\hline Ind 3      & 1.0059     & 3\\
\hline Ind 4      & 1.0053     & 3                 \\
\hline Ind 5      & 1.0052     & 3                 \\
\hline
\end{tabular}
\end{table}
\begin{table}[H]
\centering \caption{Random Graph Based tests on Arbitrary Packing Problems}
\label{tab:c6}
\begin{tabular}{|l|l|l|}
\hline Graph Name & $z^{cut}$/$z^{IP}$ & bound of $z^{cut}$/$z^{IP}$ \\ \hline Ind 1
& 1     & 9               \\ \hline Ind 2      & 1
& 9             \\ \hline Ind 3      & 1    & 9
\\ \hline Ind 4      & 1     & 9              \\ \hline
Ind 5      & 1     & 9                \\ \hline
\end{tabular}
\end{table}

\section{Conclusions}\label{sec:conclusion}

In this paper, we analyzed the strength of sparse cutting-planes for sparse packing, covering and more general MILP instances. The bounds obtained are completely data independent and in particular depend only on the sparsity structure of the constraint matrix and the support of sparse cuts -- in this sense, these results truly provide insight into the strength of sparse cuts for sparse MILPs. We have shown that the theoretical bounds are tight in many cases. Especially for packing, the theoretical bounds are quite strong, showing that if we have the correct sparse cutting-planes, then the bound obtained by using these cuts may be quite good. 

The computational results are interesting: we observe that for all the types of problems sparse cutting planes perform significantly better than the theoretical prediction. This is perhaps not surprising since the theoretical bounds are data-free and therefore ``worst-case" in nature. Hence, the empirical experiments are another justification for the main message of this paper: In many cases sparse cuts provide very good bounds for sparse IPs.

\section{Proofs}\label{sec:proofs}
\subsection{Proofs for packing problems}\label{sec:proofpacking}

For any vector $x \in \mathbb{R}^n$ and $N \subseteq [n]$, we use $x|_N$ to denote the projection of $x$ on the coordinates indexed by $N$. 

	We first observe that the column sparse closure $P^{(N)}$ can be viewed essentially as the projection of $P^I$ onto the coordinates indexed by $N$.

\begin{observation}\label{prop:fund}
Consider a mixed-integer linear set with $n$ variables. For any $N \subseteq [n]$, let $P^I|_N$ be the projection of $P^I$ onto the indices in $N$. 
Then $x \in P^{(N)}$ if and only if $x \in P^{LP}$ and $x|_N \in P^I|_N$.
\end{observation}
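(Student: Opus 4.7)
The statement is a fairly standard projection/separation fact, and the plan is to prove both directions by exploiting the defining feature of a sparse cut on $N$: its coefficient vector is supported in $N$, so the inequality's value depends only on the projection of the point onto $N$.

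For the $(\Leftarrow)$ direction, I would take any sparse cut $\alpha^T y \le \beta$ on $N$ and show that $x$ satisfies it. Since $x|_N \in P^I|_N$ there exists some $\tilde x \in P^I$ with $\tilde x|_N = x|_N$. Because the support of $\alpha$ is contained in $N$, we have $\alpha^T \tilde x = \alpha^T x$. Validity of the cut for $P^I$ then yields $\alpha^T x = \alpha^T \tilde x \le \beta$. Combined with the hypothesis $x \in P^{LP}$, this gives $x \in P^{(N)}$ by Definition~\ref{defn:sparseclosure}.

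For the $(\Rightarrow)$ direction, the containment $x \in P^{LP}$ is immediate from the definition of $P^{(N)}$. To show $x|_N \in P^I|_N$, I would argue by contraposition: suppose $x|_N \notin P^I|_N$. Since $P^I$ is a (rational) polyhedron its projection $P^I|_N$ is also a polyhedron, so by the separating hyperplane theorem there exist coefficients $\{\alpha_i\}_{i \in N}$ and a scalar $\beta$ such that $\sum_{i \in N} \alpha_i y_i \le \beta$ holds for every $y \in P^I|_N$ while $\sum_{i \in N} \alpha_i x_i > \beta$. Extending $\alpha$ to all of $\mathbb{R}^n$ by setting $\alpha_i = 0$ for $i \notin N$ turns this into a sparse cut on $N$ that is valid for $P^I$ (since projecting onto $N$ does not change the value of this inequality and the inequality holds on $P^I|_N$) but violated by $x$, contradicting $x \in P^{(N)}$.

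Neither direction presents a real obstacle; the only subtlety is being explicit that the inequality separating $x|_N$ from $P^I|_N$ lifts trivially to a valid sparse cut in the ambient space, which is exactly where the support restriction of sparse cuts is used. This observation will then serve as the ``fundamental'' tool for reducing reasoning about $P^{(N)}$ to reasoning about projections of $P^I$, which is the form that subsequent proofs about $z^{\mathcal{V},P}$ bounds rely on.
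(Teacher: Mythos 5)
Your proof is correct; the paper states this as an observation without supplying any argument, and what you give is the standard one the authors evidently intend: the forward direction via lifting a separating inequality for the polyhedron $P^I|_N$ (valid since $P^I$ is a rational polyhedron, so its projection is a closed polyhedron) to a sparse cut on $N$, and the reverse direction via the fact that a cut supported on $N$ evaluates identically on $x$ and on any preimage $\tilde x \in P^I$ of $x|_N$. No gaps; this is exactly the reduction from $P^{(N)}$ to projections of $P^I$ that the subsequent lemmas rely on.
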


\begin{observation}\label{obs:projPack}
		Consider a mixed-integer set of packing type and let $\mathcal{P} \subseteq \R^n$ be the set of feasible solutions. Then for any set of coordinates $N \subseteq [n]$, $x \in \R^N$ belongs to the projection $\mathcal{P}^I|_N$ iff the extension $\tilde{x} \in \R^n$ belongs to $\mathcal{P}^I$, where $\tilde{x}_i = x_i$ if $i \in N$ and $\tilde{x}_i = 0$ if $i \notin N$.

\end{observation}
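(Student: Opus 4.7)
The plan is to use the defining non-negativity of packing problems (i.e., $A \ge 0$, $b \ge 0$, $x \ge 0$) to show that zeroing out any subset of coordinates of a feasible point preserves feasibility, and then to push this property through the convex-hull operation that defines $\mathcal{P}^I$.

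The easy direction ($\Leftarrow$) is immediate: if $\tilde{x} \in \mathcal{P}^I$ then its projection onto coordinates in $N$ is by definition in $\mathcal{P}^I|_N$, and this projection is exactly $x$.

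For the nontrivial direction ($\Rightarrow$), suppose $x \in \mathcal{P}^I|_N$, so there is some $y \in \mathcal{P}^I$ with $y|_N = x$. Since $\mathcal{P}^I$ is the convex hull of the mixed-integer feasible points of $\mathcal{P}$, I can write $y = \sum_k \lambda_k z^k$ as a (finite) convex combination, where each $z^k \in \mathcal{P}$ satisfies the integrality requirements on $\mathcal{L}$. For each such $z^k$, define $\tilde{z}^k$ by $\tilde{z}^k_i = z^k_i$ for $i \in N$ and $\tilde{z}^k_i = 0$ for $i \notin N$. I would then verify that $\tilde{z}^k$ is still mixed-integer feasible: it is non-negative and integer-valued on $\mathcal{L}$ (since zero is integer); and because $A \ge 0$ and $\tilde{z}^k \le z^k$ componentwise, we have $A\tilde{z}^k \le A z^k \le b$. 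Hence $\tilde{z}^k \in \mathcal{P}^I$.

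To finish, I would form $\sum_k \lambda_k \tilde{z}^k$, which lies in $\mathcal{P}^I$ by convexity, and check coordinate by coordinate that it equals $\tilde{x}$: on $i \in N$ it equals $\sum_k \lambda_k z^k_i = y_i = x_i = \tilde{x}_i$, and on $i \notin N$ it equals $0 = \tilde{x}_i$. I do not foresee a real obstacle here; the only thing to be slightly careful about is making sure that zeroing out coordinates preserves all the constraints defining $\mathcal{P}$ (the bound $Ax \le b$ and the non-negativity/integrality constraints), which is exactly where the packing hypothesis $A \ge 0$ is used.
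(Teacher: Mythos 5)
Your proof is correct and is exactly the standard argument the paper leaves implicit (the Observation is stated without proof): zeroing out coordinates of a feasible point preserves feasibility because $A \ge 0$ and the variables are non-negative, and this property passes through convex combinations. The only point worth being explicit about, which you handle, is that integrality on $\mathcal{L}$ is preserved since $0$ is an integer.
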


%
	


\subsubsection{Proof of Theorem \ref{thm:packing}} \label{sec:packingMain}

	Recall we want to show that $z^{\V,P} \le \eta^{\V}(\Gp) \cdot z^I$. (See Example \ref{ex:mixedStableSet} for a concrete example of how the proof works.) In this section we use $P$ to denote the mixed-integer set corresponding to the packing problem $(\textup{P})$.
	
	There is a natural identification of sets of nodes of $\Gp$ with sets of indices of variables, namely if $\J = \{J_1, J_2, \ldots, J_q\}$ is the given variable index partition and the nodes of $\Gp$ are $\{v_1, v_2, \ldots, v_q\}$, then the set of vertices $\{v_i\}_{i \in I}$ corresponds to the indices $\bigcup_{i \in I} J_i \subseteq [n]$. We will make use of this correspondence, and in order to make statements precise we use the function $\phi : 2^{V(\Gp)} \rightarrow 2^{[n]}$ to denote this correspondence;
with slight abuse of notation, for a singleton set $\{v\}$ we use $\phi(v)$ instead of $\phi(\{v\})$.
	
	Given a set of vertices $S \subseteq V(\Gp)$, let $x^{(S)}$ denote the optimal solution of the packing problem conditioned on all variables $x_i$ outside $S$ taking value $0$, or more precisely, $x^{(S)} \in \argmax\{ c^T x \mid x \in P^I, x_i = 0 ~~\forall i \notin \phi(S)\}$ (we will assume without loss of generality that $x^{(S)}$ is integral); since we are working with a packing problem, this is the roughly same as optimizing over the projection of $P^I$ onto the variables in $\phi(S)$ (but notice $x^{(S)}$ lies in the original space).

	We start by showing that, roughly speaking, the closure $P^{(S)}$ captures the original packing maximization problem as long as we ignore the coordinates outside $S$.
	
	\begin{lemma}\label{lemma:pck1New}
		For any $x \in P^{(S)}$, we have $(c|_{\phi(S)})^T (x|_{\phi(S)}) \le c^T x^{(S)}$.
	\end{lemma}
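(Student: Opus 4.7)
The plan is to exploit the two observations just stated (Observation~\ref{prop:fund} and Observation~\ref{obs:projPack}) to reduce the statement to a direct feasibility argument for the optimization problem defining $x^{(S)}$. The key idea is that a point in $P^{(S)}$, when projected onto the coordinates $\phi(S)$ and then ``zero-extended'' back to $\R^n$, gives a feasible solution to the restricted packing problem whose optimum is $x^{(S)}$.

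First, I would fix $x \in P^{(S)}$ and apply Observation~\ref{prop:fund} with $N = \phi(S)$ to conclude that $x|_{\phi(S)} \in P^I|_{\phi(S)}$. Next I would invoke Observation~\ref{obs:projPack}, which is tailored to the packing setting, to deduce that the vector $\tilde{x} \in \R^n$ defined by $\tilde{x}_i = x_i$ for $i \in \phi(S)$ and $\tilde{x}_i = 0$ for $i \notin \phi(S)$ lies in $P^I$.

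Now $\tilde{x}$ satisfies $\tilde{x}_i = 0$ for all $i \notin \phi(S)$, so it is a feasible point for the optimization $\max\{c^T y \mid y \in P^I, \ y_i = 0 ~\forall i \notin \phi(S)\}$, whose optimum is attained by $x^{(S)}$. Hence $c^T \tilde{x} \le c^T x^{(S)}$. Since $c \ge 0$ and $\tilde{x}$ agrees with $x$ on $\phi(S)$ and vanishes elsewhere, $c^T \tilde{x} = (c|_{\phi(S)})^T (x|_{\phi(S)})$, which gives the desired inequality.

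The argument is essentially a chain of three short steps and no real obstacle arises, as all the content is packaged in the two prior observations. The only mild point to be careful about is that Observation~\ref{obs:projPack} uses the packing structure in an essential way: without nonnegativity of $A$ and the ``$\le$'' constraints, zero-extending a projected feasible point need not remain feasible. I would make this explicit so the reader sees where the packing hypothesis enters.
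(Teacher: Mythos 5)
Your proof is correct and follows essentially the same route as the paper: both reduce the claim to the fact that the zero-extension of $x|_{\phi(S)}$ lies in $P^I$ (via the packing structure) and is therefore dominated by the optimum $x^{(S)}$. The only cosmetic difference is that you invoke Observation~\ref{obs:projPack} as a black box, while the paper re-derives it inline by writing $x|_{\phi(S)}$ as a convex combination of projections of points of $P^I$ and zero-extending each one; the content is identical.
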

	
	\begin{proof}
		Given any $x \in P^{(S)}$, Observation \ref{prop:fund} implies that $x|_{\phi(S)} \in \proj_{\phi(S)}(P^I)$. Thus, there exists points $\bar{x}^1,\ldots,\bar{x}^k \in P^I$ and $\lambda_1,\ldots, \lambda_k \in [0, 1]$, such that $x|_{\phi(S)} = \sum_{i = 1}^k \lambda_i \cdot (\bar{x}^i|_{\phi(S)})$ and $\sum _{i= 1}^k \lambda_i = 1$. Therefore, $$({c}|_{\phi(S)})^T (x|_{\phi(S)}) = \sum_{i = 1}^k \lambda_i \cdot (c|_{\phi(S)})^T (x^i|_{\phi(S)}).$$

			To upper bound the right-hand side, consider a point $x^i|_{\phi(S)}$. Let $\tilde{x} \in \R^n$ (the original space) denote the point obtained from $x^i|_{\phi(S)}$ by putting a 0 in all coordinates outside $\phi(S)$, so $\tilde{x}|_{\phi(S)} = x^i|_{\phi(S)}$ and $\tilde{x}|_{[n] \setminus \phi(S)} = \mathbf{0}$. Because $P^I$ is of packing type, notice that $\tilde{x}$ belongs to $P^I$. The optimality of $x^{(S)}$ then gives that $(c|_{\phi(S)})^T (x^i|_{\phi(S)}) = c^T \tilde{x} \le c^T x^{(S)}$.
		
		Employing this upper bound on the last displayed equation gives  $$({c}|_{\phi(S)})^T (x|_{\phi(S)}) \le \sum_{i = 1}^k \lambda_i \cdot c^T x^{(S)} = c^T x^{(S)},$$ thus concluding the proof. \myqed
	\end{proof}
	
	Now we lower bound the packing problem optimum $z^I$ by solutions constructed via mixed stable sets.
	
\begin{lemma}\label{lem:addstableset}
Given any mixed stable set $\mathcal{M}$ for $\Gp$, the point $\sum_{M \in \M} x^{(M)}$ belongs to $P^I$. Thus, $z^I \ge \sum_{M \in \mathcal{M}} c^T x^{(M)}$.
\end{lemma}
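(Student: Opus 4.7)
The plan is to prove feasibility of the claimed point coordinate-block by coordinate-block, leveraging the interplay between the clique structure induced by each row of $A$ and the independence-like property of the mixed stable set $\mathcal{M}$. Set $\bar{x} := \sum_{M \in \mathcal{M}} x^{(M)}$. Since the sets $M \in \mathcal{M}$ are pairwise disjoint in $V(\Gp)$, the corresponding index sets $\phi(M) \subseteq [n]$ are pairwise disjoint, and since $\supp(x^{(M)}) \subseteq \phi(M)$, the vectors $x^{(M)}$ have pairwise disjoint supports. This immediately delivers three book-keeping facts for free: (i) $\bar{x} \ge 0$; (ii) $\bar{x}_j \in \mathbb{Z}_+$ for all $j \in \mathcal{L}$, since each non-zero coordinate of $\bar{x}$ comes from a single integer $x^{(M)}_j$; and (iii) $c^T \bar{x} = \sum_{M \in \mathcal{M}} c^T x^{(M)}$, again because the supports do not overlap.

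The heart of the proof is verifying $A\bar{x} \le b$, which I would do row by row. Fix a row $k \in [m]$, and let $C_k \subseteq V(\Gp)$ be the set of vertices $v$ such that some column $j \in \phi(v)$ has $A_{kj} \neq 0$. By the very definition of the packing interaction graph, any two distinct vertices in $C_k$ are joined by an edge of $\Gp$, so $C_k$ is a clique. Since $\mathcal{M}$ contains no pair of vertices in distinct parts joined by an edge, at most one set $M^* \in \mathcal{M}$ can intersect $C_k$. For every other $M \in \mathcal{M}$ we have $\supp(x^{(M)}) \subseteq \phi(M) \subseteq [n] \setminus \phi(C_k)$, so such $M$ contributes zero to row $k$ of $A\bar{x}$. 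Using $A \ge 0$ and $x^{(M)} \ge 0$ to write everything as a sum of non-negative terms, we then obtain
\begin{equation*}
(A\bar{x})_k \;=\; \sum_{M \in \mathcal{M}} (A x^{(M)})_k \;=\; (A x^{(M^*)})_k \;\le\; b_k,
\end{equation*}
where the last inequality uses $x^{(M^*)} \in P^I$; if no such $M^*$ exists the left-hand side is simply $0 \le b_k$.

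Combining the two steps, $\bar{x}$ satisfies all packing constraints, the integrality requirements on $\mathcal{L}$, and the non-negativity bounds, hence $\bar{x} \in P^I$. Optimality of $z^I$ then yields $z^I \ge c^T \bar{x} = \sum_{M \in \mathcal{M}} c^T x^{(M)}$, as claimed. The only place where something subtle is happening is the clique argument in the middle step; the rest is routine. Note that we never need the subordination property that each $M \in \mathcal{M}$ is contained in some $S \in \V$—that hypothesis will enter later when this lemma is combined with Lemma \ref{lemma:pck1New} to prove Theorem \ref{thm:packing}.
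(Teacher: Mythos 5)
Your proof is correct and follows essentially the same route as the paper's: the paper also argues row by row that, because any two blocks with nonzero entries in a common row are adjacent in $\Gp$ and $\mathcal{M}$ has no edges between its distinct sets, at most one $M^* \in \mathcal{M}$ contributes to each constraint, reducing the row sum to $A_k x^{(M^*)} \le b_k$. Your phrasing via the clique $C_k$ and your observation that the subordination property is not needed here are accurate but do not change the substance of the argument.
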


\begin{proof}
	We just prove the first statement. First, notice since each $x^{(M)}$ is integral and non-negative, so is the point $\sum_{M \in \M} x^{(M)}$. So consider an inequality $A_i x \le b_i$ in $(\textup{P})$. For any two sets $M_1 \neq M_2 \in \M$, notice that the vector $A_i$ either has all zeros on the indices corresponding to $M_1$ or on the indices corresponding to $M_2$, namely either $A_i|_{\phi(M_1)} = \textbf{0}$ or $A_i|_{\phi(M_2)} = \textbf{0}$. Applying this to all pairs of sets in $\M$, we get that there is only one set $M^* \in \M$ such that $A_i|_{\phi(M^*)}$ is non-zero, which implies that 
	\begin{align*}
		A_i \sum_{M \in \M} x^{(M)} = \sum_{M \in \M} A_i x^{(M)} = \sum_{M \in \M} (A_i|_{\phi(M)}) (x^{(M)}|_{\phi(M)}) = A_i x^{(M^*)} \le b_i,
	\end{align*}
	where the last inequality follows from the feasibility of $x^{(M^*)}$. Thus, the point $\sum_{M \in \M} x^{(M)}$ satisfies all inequalities $A_i x \le b_i$ of the system $(\textup{P})$, concluding the proof. \myqed
	
\end{proof}
	

Now, we present the proof of Theorem \ref{thm:packing}.

\begin{proof}[Proof of Theorem \ref{thm:packing}]
		Let $V = \{v_1, \ldots, v_q\}$ denote the vertices of $\Gp$, and let $\sf{MSS}$ denote the set of all mixed stable sets of $\Gp$ with respect to $\mathcal{V}$.	Let $\{y_M\}_{M \in \sf{MSS}}$ be an optimal solution of linear problem (\ref{eq:etadefn}) corresponding to the definition of mixed fractional chromatic number with respect to $\mathcal{V}$, and define $g = \sum_{\M \in \sf{MSS}} \chi_\M y_\M \in \R^{q}$. Based on the constraints of \eqref{eq:etadefn} we have that $g \geq \ones$.
		
		We upper bound the optimum $z^{\V,P}$ of the column block-sparse closure. For that let $$x^* = \argmax \{c^Tx\,|\,x \in P^{\mathcal{V}, P}\},$$ be the optimal solution. Then breaking up the indices of the variables based on the nodes $V$ and using the non-negativity of $c$ and $x^*$, we have
	\begin{align*}
		z^{\mathcal{V}, P} &= c^T x^* = \sum_{j = 1}^q (c|_{\phi(v_j)})^T (x^*|_{\phi(v_j)}) \leq \sum_{j=1}^{q} g_j \cdot (c|_{\phi(v_j)})^T (x^*|_{\phi(v_j)})  \\
		& =  \sum_{j=1}^{q} \left(\sum_{\M \in \sf{MSS}} (\chi_\M)_j \cdot  y_\M \right) \cdot (c|_{\phi(v_j)})^T (x^*|_{\phi(v_j)})\\
		&=\sum_{\M \in \sf{MSS}} y_\M \cdot \left(\sum_{j=1}^{q} (\chi_\M)_j \cdot (c|_{\phi(v_j)})^T (x^*|_{\phi(v_j)})\right)\\
		&= \sum_{\M \in \sf{MSS}} y_\M \cdot \left(\sum_{\{j\,|\,v_j \in \M\}} (c|_{\phi(v_j)})^T (x^*|_{\phi(v_j)})\right)\\
		&=  \sum_{\M \in \sf{MSS}} y_\M \cdot \left(\sum_{S \in \M} (c|_{\phi(S)})^T (x^*|_{\phi(S)})\right).
	\end{align*}
	
	To further upper bound the right-hand side consider some $\M \in \sf{MSS}$, some $S \in \M$ and the term $(c|_{\phi(S)})^T (x^*|_{\phi(S)})$. First we claim that $x^*$ belongs to the column block-sparse closure $P^{(S)}$. To see this, first recall from the definition of mixed stable set that there must be a set $V_S$ in the support list $\V$ containing $S$. Moreover, since $x^* \in P^{\V,P} = \bigcap_{V' \in \V} P^{(V')}$, we have $x^* \in P^{(V_S)}$; finally, the monotonicity of closures implies $P^{(S)} \supseteq P^{(V_S)}$, and hence $x^* \in P^{(S)}$. Thus we can employ Lemma \ref{lemma:pck1New} to obtain the upper bound $(c|_{\phi(S)})^T (x^*|_{\phi(S)}) \le c^T x^{(S)}$.
	
	Plugging this bound on last displayed inequality and using Lemma  \ref{lem:addstableset} we then get
	\begin{align*}
		z^{\V,P} \le \sum_{\M \in \sf{MSS}} y_\M \cdot \left(\sum_{S \in \mathcal{M}} c^T x^{(S)} \right) \leq \sum_{\M \in \sf{MSS}} y_\M \cdot z^I = \eta^{\mathcal{V}}(\Gp) \cdot z^I.
	\end{align*}
	This concludes the proof. \myqed
	\end{proof}

\subsubsection{Proof of Corollary~\ref{cor:brooks} and Theorem~\ref{thm:brookstight}}

Brooks' Theorem~\cite{brooks41} is the following result (recall that a proper coloring of a graph is an assignment of colors to the vertices such that no edge has the same color on both endpoints).

\begin{theorem}[Brook's Theorem]
Consider a connected graph $G$ of max degree $\Delta$. Then $G$ can be properly colored by $\Delta$ colors, except in two cases either when $G$ is a complete graph or an odd cycle, in which case it can be properly colored with $\Delta + 1$ colors.
\end{theorem}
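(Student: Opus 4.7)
The plan is to establish Brooks' Theorem by producing an explicit proper $\Delta$-coloring whenever $G$ is neither $K_{\Delta+1}$ nor an odd cycle; the fact that those exceptional graphs require $\Delta+1$ colors is immediate from their structure. The overall strategy follows Lovász's classical argument: order the vertices cleverly and apply greedy coloring.

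First I would dispose of the low-degree cases. If $\Delta \le 2$, then $G$ is a path or cycle; all such graphs are $2$-colorable except odd cycles, matching the stated exceptions. So assume $\Delta \ge 3$. I would then reduce to the case where $G$ is $\Delta$-regular and $2$-connected. If some vertex $v$ has degree strictly less than $\Delta$, run a BFS from $v$ and greedily color the vertices in decreasing BFS-distance; every vertex other than $v$ has a later-ordered neighbor, so at most $\Delta-1$ of its neighbors are already colored when its turn comes, and $v$ itself has fewer than $\Delta$ neighbors. If $G$ has a cut vertex, I would split $G$ into blocks, inductively color each, and merge the colorings by permuting palettes so they agree at the cut vertex; a short check rules out any block being $K_{\Delta+1}$.

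The heart of the argument, and the main obstacle, is the following combinatorial fact: there exist vertices $u_1, u_2, u_n$ with $u_n u_1, u_n u_2 \in E(G)$, $u_1 u_2 \notin E(G)$, and $G - \{u_1, u_2\}$ connected. When $G$ is $3$-connected the claim is easy: pick any $u_n$ with two non-adjacent neighbors (such $u_n$ exists because $G \neq K_{\Delta+1}$), and $G - \{u_1,u_2\}$ stays connected by $3$-connectivity. When $G$ is only $2$-connected, a more careful analysis using a $2$-vertex cut $\{x,y\}$ and endblocks of $G - x$ (or $G - y$) is required; exhibiting the triple in that sub-case is the delicate part of the proof and where most of the work lies.

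Given such $u_1, u_2, u_n$, the proof wraps up quickly. Run BFS in $G - \{u_1,u_2\}$ rooted at $u_n$ and list the vertices $u_3, \ldots, u_n$ in decreasing BFS-distance. Color $u_1$ and $u_2$ both with color $1$ (permissible since $u_1 u_2 \notin E(G)$), then greedily color $u_3, \ldots, u_n$ in order. For $i < n$, vertex $u_i$ has at least one neighbor later in the ordering (the next step toward $u_n$ along the BFS tree), so at most $\Delta - 1$ of its neighbors are already colored and a color from the palette of size $\Delta$ is available. For $u_n$, all $\Delta$ neighbors are colored, but $u_1$ and $u_2$ share color $1$, so its neighborhood occupies at most $\Delta - 1$ distinct colors and a $\Delta$-th color remains available, yielding the desired proper $\Delta$-coloring.
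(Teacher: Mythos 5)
The paper does not prove this statement: Brooks' Theorem is quoted as a known classical result with a citation to \cite{brooks41}, and it is used only as a black box to pass from the fractional chromatic number bound of Theorem~\ref{thm:packing} to the degree bound of Corollary~\ref{cor:brooks}. So there is no in-paper proof to compare against; what you have written is the standard Lov\'asz-style argument, and its overall architecture is sound. The reduction to the $\Delta$-regular $2$-connected case, the greedy coloring in decreasing BFS-distance order, the block decomposition with palette permutation, and the final step of giving two non-adjacent neighbors $u_1,u_2$ of $u_n$ the same color so that $u_n$ sees at most $\Delta-1$ colors, are all correct as stated.

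There is, however, one genuine gap, and you have named it yourself without filling it: the existence of the triple $u_1,u_2,u_n$ with $u_nu_1,u_nu_2\in E$, $u_1u_2\notin E$, and $G-\{u_1,u_2\}$ connected is only established when $G$ is $3$-connected. In the $2$-connected but not $3$-connected case you say that ``a more careful analysis \ldots is required'' and that this is ``where most of the work lies'' --- which is true, but a proof cannot defer precisely the step it identifies as the crux. The missing argument is: take a $2$-cut $\{x,y\}$; since $G$ is $2$-connected, $G-x$ is connected and has at least two end-blocks, each containing a neighbor of $x$ that is not a cut vertex of $G-x$ (otherwise $G$ would have a cut vertex); take $u_1,u_2$ to be such neighbors in two distinct end-blocks (they are non-adjacent because they lie in different blocks and neither is a cut vertex), set $u_n=x$, and verify that $G-\{u_1,u_2\}$ is connected using $\Delta\ge 3$ (so $x$ retains a neighbor outside $\{u_1,u_2\}$) together with the fact that deleting a non-cut vertex from an end-block leaves $G-x$ connected. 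Until that paragraph is written out, the proposal is a correct sketch rather than a complete proof.
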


Since the fractional chromatic number is a lower bound on the chromatic number, we obtain Corollary \ref{cor:brooks}. 

We now prove Theorem~\ref{thm:brookstight}.

\begin{proof}[Proof of Theorem \ref{thm:brookstight}] 
Recall that super sparse closure corresponds to the support list $$\mathcal{V} = \{\{v_1\}, \{v_2\}, \dots, \{v_{|V|}\}\}.$$
The proof of both parts is similar. 

	\paragraph{Part 1.} 
	We want to show an example where $z^{SS} \geq (3 - \epsilon) z^I$ for all $\epsilon >0$ where $\Gp$ is a 3-cycle. We will construct an integer program with $3$ variables and $\mathcal{J} = \left\{\{1\}, \{2\}, \{3\}\right\}$. Given $\epsilon > 0$, consider the following packing integer program:
\begin{eqnarray}
\begin{array}{rcccl}
\textup{max}& x_1 &+ x_2 &+ x_3& \\
\textup{s.t.}& x_1 &+ x_2& &\leq 2 - \frac{2}{3}\epsilon \\
&x_1 & &+ x_3 &\leq 2 - \frac{2}{3}\epsilon \\
&&x_2 &+ x_3& \leq 2 - \frac{2}{3}\epsilon \\
&x_1 \in \mathbb{Z}_{+},&x_2 \in \mathbb{Z}_{+},& x_3 \in \mathbb{Z}_{+} 
\end{array}
\end{eqnarray}
Clearly $\Gp$ is a 3-cycle. 
Note that the only valid inequalities that have support on each of the three blocks defined by $\J$ is $x_i \leq 1$ for $i =1,2,3$. Thus, the point $(1 - \frac{\epsilon}{3}, 1 - \frac{\epsilon}{3}, 1 - \frac{\epsilon}{3})$ belongs to the super sparse closure $P^{S.S.}$, and hence the optimum value satisfies $z^{S.S.} \geq 3 - \epsilon$. On the other hand clearly, $z^{IP} = 1$, concluding the proof.

	\paragraph{Part 2.} 
	We want to show an example where $z^{SS} \geq (2 - \epsilon) z^I$ for all $\epsilon >0$ where $\Gp$ is a star.
	Take $\Delta \in \mathbb{Z}_{+}$. We construct a packing integer program with $2\Delta$ variables and $\mathcal{J} = \left\{\{1, 2, \dots, \Delta\}, \{\Delta + 1\}, \{\Delta + 2\}, \dots, \{2\Delta\}\right\}$. Given $\epsilon > 0$, consider the following integer program
\begin{eqnarray*}
\textup{max}& \sum_{i = 1}^{2\Delta} x_i\\
\textup{s.t.}& x_i + x_{\Delta + i} \leq 2 - \epsilon \ \forall i \in [\Delta]\\
&x \in \mathbb{Z}^{2\Delta}_{+}.
\end{eqnarray*}
Clearly $\Gp$ is a star with $\Delta$ leaves. Letting $P$ be the associated mixed integer set of the above integer program, note that the projection of $P^{I}$ to the first block of variables $\{1, 2, \ldots, \Delta\}$ equals $[0, 1]^{\Delta}$. Also the only valid inequalities that have support on each of the other $\Delta$ blocks $\{\Delta + i\}$ is $0 \leq x_i \leq 1$ for $i \in \{\Delta + 1, \dots, 2\Delta\}$. Thus, the point $x$ with $x_i = 1$ for all $i \in [\Delta]$ and $x_i = 1- \epsilon$ for all $i \in \{\Delta + 1, \dots, 2\Delta\}$ belongs to the super sparse closure $P^{S.S.}$. Thus the optimum $z^{S.S.}$ is at least $2\Delta - \Delta\epsilon$. On the other hand, clearly $z^{I} = \Delta$, concluding the proof.\myqed
\end{proof}


\subsubsection{Proof of Theorem~\ref{thrm:nstree}}

 We prove the desired upper bound $z^{N.S.} \leq \left(\frac{2\Delta-1}{\Delta} \right) \cdot z^I$. Due to Theorem \ref{thm:packing}, it suffices to upper bound the fractional chromatic number $\eta^{\V}(\Gp) \le \frac{2\Delta-1}{\Delta}$ for $\V$ set according to the natural sparse closure setting. Notice however, that in this setting every edge of $E = E(\Gp)$ belongs to some set in $\V$ and vice-verse, and therefore $\eta^{\V}(\Gp) = \eta^E(\Gp)$. Thus, it suffices to prove $\eta^E(\Gp) \le \frac{2\Delta-1}{\Delta}$.

\remove{
\begin{proposition}\label{prop:treeupbnd}
Let ($\textup{P}$) be defined by $A \in \mathbb{Q}_+^{m \times n}$, $b \in \mathbb{Q}^m$, $c\in \mathbb{R}^n_{+}$ and $\mathcal{L} \subseteq
[n]$. Let $\mathcal{J}:= \{J_1, J_2 \dots, J_q\}$ be a partition of the index set of columns of $A$ (that is $[n]$). Let $G^P_{A, \mathcal{J}}(V, E)$ be the packing interaction graph of $A$. Suppose $G^P_{A, \mathcal{J}}(V,E)$  a tree and let $k$ be the maximum node degree of $G^P_{A, \mathcal{J}}(V,E)$. 
Then:
\begin{eqnarray*}
z^{N.S.} \leq \left(\frac{2k-1}{k} \right)z^I.
\end{eqnarray*}
\end{proposition}

\begin{proof}
}

	The following is the main tool for providing an efficient mixed stable set fractional coloring. 

	\begin{lemma}\label{lemma:treeMSS}
		Let $T = (V, E)$ be a tree of maximum degree $\Delta$. Then there is a collection of $2\Delta-1$ sets of edges $E_1, E_2, \ldots, E_{2\Delta-1}$ and $2\Delta-1$ sets of nodes $V_1, V_2, \ldots, V_{2\Delta -1}$ satisfying the following:
		\begin{enumerate}
			\item For each $i \in [2\Delta -1]$ the collection $E_i \cup V_i$ is a mixed stable set for $T$ subordinate to $E$
			
			
			\item Each node of $T$ is covered exactly $\Delta$ times by the collection of mixed stable sets $\{E_i \cup V_i\}_{i \in [2\Delta - 1]}$.
		\end{enumerate}
	\end{lemma}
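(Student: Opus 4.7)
My approach is to build the $2\Delta-1$ mixed stable sets in two phases: first extract $2\Delta-1$ induced matchings from a strong edge coloring of $T$ to form the edge parts $E_1,\ldots,E_{2\Delta-1}$, then supplement with carefully chosen singleton vertex sets $V_1,\ldots,V_{2\Delta-1}$ so that each vertex is covered exactly $\Delta$ times in total.

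For the first phase I invoke the classical theorem of Faudree, Gyárfás, Schelp and Tuza: every tree $T$ of maximum degree $\Delta$ admits a strong edge coloring---a proper edge coloring where any two edges at line-graph distance $\le 2$ receive different colors---using at most $\max_{uv\in E(T)}(d(u)+d(v)-1)\le 2\Delta-1$ colors. Each color class is an induced matching and hence is already a mixed stable set subordinate to $E$; vertex $v$ is covered by exactly $d(v)$ of these classes, one per incident edge, leaving a deficit of $\Delta-d(v)$ singleton coverings to add.

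For the second phase, call color $i$ \emph{admissible at} $v$ if no edge within distance $1$ of $v$ in $T$ carries color $i$, and let $A(v)$ denote the set of admissible colors. Adding $\{v\}$ to $E_i\cup V_i$ is safe with respect to condition (3) of Definition \ref{defn:graph theoretic} precisely when $i\in A(v)$ and no existing element of $V_i$ is a tree-neighbor of $v$. I would then establish two claims: (i) $|A(v)|\ge \Delta-d(v)$ for every vertex $v$, which follows from the strong-coloring distance-$2$ property (forcing the $d(v)$ colors at $v$ to be disjoint from the colors on edges at neighbors of $v$ other than the joining edge) together with the Faudree--Gy\'arf\'as--Schelp--Tuza construction capping the number of distinct colors within distance $1$ of $v$ at $\Delta+d(v)-1$; and (ii) the singleton choices $X_v\subseteq A(v)$ with $|X_v|=\Delta-d(v)$ can be made to satisfy $X_u\cap X_v=\emptyset$ for every tree edge $uv$ (as required by condition (3), since two tree-adjacent singletons in the same class are forbidden). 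Claim (ii) is handled by rooting $T$ and running a BFS-order greedy assignment: each vertex $v$ picks $X_v$ from $A(v)\setminus X_{p(v)}$, where $p(v)$ is the parent of $v$, with feasibility following from the slack in (i).

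\textbf{The principal obstacle} is verifying claim (i) uniformly across all vertices. A careless strong edge coloring could assign distinct colors to the outgoing edges of different neighbors of a low-degree vertex sandwiched between high-degree ones---the worst case being a degree-$2$ vertex with two degree-$\Delta$ neighbors---thereby exhausting all $2\Delta-1$ colors locally and leaving $|A(v)|=0$. The fix is to use the structured Faudree--Gy\'arf\'as--Schelp--Tuza construction, whose greedy bottom-up nature in a rooted tree forces the same small palette to be reused across sibling subtrees, which keeps the local color count at the optimal bound $\Delta+d(v)-1$ and the admissible pool of the required size.
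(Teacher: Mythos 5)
Your reduction to the two claims is the right way to organize the difficulty, but claim (i) is where the argument actually breaks, and the fix you point to does not work. The property you need --- that at most $\Delta + d(v) - 1$ distinct colors appear on edges incident to the closed neighborhood $N[v]$ --- is not a consequence of being a strong edge coloring with $2\Delta-1$ colors, and it is \emph{not} delivered by the greedy bottom-up construction. Concretely, take $\Delta = 3$ and the tree with edges $rq,\ qq',\ qu_1,\ u_1a,\ u_1v,\ vu_2,\ u_2c,\ u_2d$, so that $q$, $u_1$, $u_2$ have degree $3$ and $v$ has degree $2$. The BFS greedy from the root $r$ with the smallest-available-color rule can produce $c(rq)=1$, $c(qu_1)=2$, $c(qq')=3$, $c(u_1v)=4$, $c(u_1a)=5$, $c(vu_2)=1$, $c(u_2c)=2$, $c(u_2d)=3$; this is a valid strong edge coloring with $5 = 2\Delta-1$ colors, yet the edges incident to $N[v]=\{u_1,v,u_2\}$ carry all five colors, so $A(v)=\emptyset$ while $v$ still needs $\Delta - d(v) = 1$ singleton. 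The underlying issue is that the palettes of two degree-$\Delta$ neighbors of a low-degree vertex must be made to agree except on the two joining edges; this requires a deliberate, globally coordinated reuse of colors that no off-the-shelf strong-coloring theorem provides, so you would have to design and prove correct a new inductive coloring maintaining this invariant --- which is the real content of the lemma, not a detail.

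There is a second, independent gap: even granting (i), claim (ii) does not ``follow from the slack in (i)'' because there is no slack. Claim (i) yields $|A(v)| \ge \Delta - d(v)$, which is exactly the number of singletons $v$ needs, leaving nothing to spare for avoiding $X_{p(v)}$; since $X_{p(v)} \subseteq A(p(v))$ and $A(v)$ both consist of locally rare colors they can well intersect, and then $|A(v)\setminus X_{p(v)}| < \Delta - d(v)$. For comparison, the paper's proof dissolves both difficulties at once by first padding $T$ with dummy leaves so that every internal node has degree exactly $\Delta$: then only leaves ever need singletons, the admissible palette of a leaf is exactly the $\Delta-1$ colors absent from its parent's edges (precisely the number required), and leaves are pairwise non-adjacent so no two singletons can conflict. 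If you want to keep your strong-edge-coloring framing, the cleanest repair is to perform that augmentation first; on the padded tree your phase one essentially coincides with the paper's labeling and your phase two becomes trivial.
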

	
	\begin{proof}
		If $T$ consists of a single edge, then $\Delta = 1$ and we can simply set $E_1$ to be the edge of $T$ and set $V_1 = \emptyset$ to get the desired sets. So assume that $T$ has at least one internal node.
		
		In order to simplify the proof we make all the degrees the same: construct the tree $T'$ from $T$ by adding new leaves to all internal nodes of $T$ so that now every internal node of $T'$ has degree exactly $\Delta$. We will construct the desired sets $\{E'_i\}_{i \in [2\Delta - 1]}$ and $\{V'_i\}_{i \in [2\Delta - 1]}$ for $T'$ via a coloring argument reminiscent of the proof of Brook's Theorem (although not the same argument).
				
		 Pick any internal node $v_0$ of $T'$ and root this tree at $v_0$. We label all edges and leaf nodes of $T'$ with numbers in $[2\Delta -1]$ according to the following BFS procedure (we use the standard meaning of ``parent'', ``child'', ``depth'' (where $v_0$ has depth 0, $L$ is the maximum dept of any node), etc. for rooted trees):

\begin{algorithm}[ht!]
\begin{algorithmic}
\label{dalg:convex-sub}
	\STATE Label each of the $\Delta$ edges incident to the root $v_0$ with a distinct label
	
	\FOR{$i = 1$ to $L$}
		\FOR{every vertex $v$ of depth $i$}
			\STATE Let $S$ denote the set of labels assigned to all the edges incident to the parent of $v$ and notice that $|S| = \Delta$
			
			\IF{$v$ is an internal node}
				\STATE Label the $\Delta-1$ edges of $v$ to its children with distinct labels from the set $[2\Delta - 1] \setminus S$
			\ELSE
				\STATE Assign \textbf{all} $\Delta-1$ labels $[2\Delta - 1] \setminus S$ to $v$.
			\ENDIF
		\ENDFOR
	\ENDFOR
\end{algorithmic}
\end{algorithm}
	Then for all $j \in [2 \Delta -1]$, let $E'_j$ (resp. $V'_j$) be set of edges (resp. nodes) of $T'$ that have label $j$ (notice that vertices have multiple labels). 
	
	It follows directly from the labeling procedure that each set $E'_i \cup V'_i$ is a mixed stable set of $T'$ (and clearly subordinate to the edges of $T'$). Now to see that each node $v$ of $T'$ is covered exactly $\Delta$ times by the collection $\{E'_i \cup V'_i\}_{i \in [2\Delta - 1]}$ we consider 2 cases: If $v$ is an internal node, then by construction of $T'$ it has degree exactly $\Delta$ and since $\bigcup_i E'_i = E(T')$ it is covered $\Delta$ times by the collection $\{E'_i\}_i$ and 0 times by the collection $\{V'_i\}_i$, giving the desired result. On the other hand, if $v$ is a leaf of $T'$, then it is covered once by the set $E'_i$ where $i$ is the label of the only edge incident to $v$, covered by no other set $E'_j$, and covered by the $\Delta-1$ sets $V'_j$ corresponding to the labels of $v$. Thus, the sets $\{E'_i\}_i$ and $\{V'_i\}_i$ satisfy the desired properties with respect to the modified tree $T'$.
	
	Now to get the desired sets for the original tree $T$, we just remove the nodes in $T'\setminus T$ from the sets $E'_i \cup V'_i$: for each $\{v, v'\}$ with $v \in V(T)$ and $v' \notin V(T)$ that belongs to $E'_i \cup V'_i$, replace it with the singleton $\{v\}$; denote the set obtained by $E_i \cup V_i$ (concretely, $E_i$ is the set of pairs in this collection and $V_i$ is the set of singletons in this collection). Notice that this replacement procedure does not add repeated singletons: this is because $E'_i \cup V'_i$ contains only disjoint edges (the labeling scheme above does not assign color $i$ to two intersecting edges) and if it contains an edge $(v,v')$ with $v' \notin V(T)$ then this implies that $v$ is an internal node of $T$ and hence the singleton $\{v\}$ does not belong to $E'_i \cup V'_i$.
	
	It follows directly from this replacement operation that the sets $E_i \cup V_i$'s are mixed stable sets for $T$ subordinate to $E(T)$ and that still each node of $T$ is covered exactly $\Delta$ times by them. This concludes the proof. \myqed
	\end{proof}

	The upper bound in Theorem \ref{thrm:nstree} the follows from the following corollary.

	\begin{corollary}
		Let $H$ be a tree of maximum degree $\Delta$. Then $\eta^E(H) \le \frac{2 \Delta -1}{\Delta}$.
	\end{corollary}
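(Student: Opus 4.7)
The plan is to derive this bound as a direct consequence of the preceding Lemma (the tree MSS lemma), using the $2\Delta - 1$ mixed stable sets it produces as the support of a feasible fractional solution to the LP \eqref{eq:etadefn} defining $\eta^{E}(H)$. The key observation is that the lemma delivers sets $\M_i = E_i \cup V_i$, $i \in [2\Delta-1]$, each of which is a mixed stable set of $H$ subordinate to $E$, with the global uniform coverage property $\sum_{i=1}^{2\Delta - 1} \chi_{\M_i} = \Delta \cdot \ones$.

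First I would set the fractional weights $y_{\M_i} = 1/\Delta$ for $i \in [2\Delta-1]$ and $y_{\M} = 0$ for every other mixed stable set subordinate to $E$. Then I would verify the covering constraint of \eqref{eq:etadefn}: substituting into $\sum_{\M} y_{\M} \chi_{\M}$ gives $\frac{1}{\Delta} \sum_{i=1}^{2\Delta - 1} \chi_{\M_i} = \frac{1}{\Delta} \cdot \Delta \cdot \ones = \ones$, so the constraint is satisfied (with equality, in fact). Non-negativity of the $y_{\M}$'s is immediate.

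Next I would compute the objective of \eqref{eq:etadefn} at this feasible solution, obtaining $\sum_{\M} y_{\M} = (2\Delta - 1) \cdot \tfrac{1}{\Delta} = \tfrac{2\Delta - 1}{\Delta}$. Since $\eta^{E}(H)$ is the minimum of the objective over feasible solutions, this yields $\eta^{E}(H) \le \tfrac{2\Delta - 1}{\Delta}$, proving the corollary.

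Finally, to close the loop on Theorem \ref{thrm:nstree}, I would note that in the natural sparse closure setting the support list $\V$ equals the edge set $E$ of $\Gp$, so mixed stable sets subordinate to $\V$ coincide with those subordinate to $E$; hence $\eta^{\V}(\Gp) = \eta^{E}(\Gp)$, and combining the bound above with Theorem \ref{thm:packing} gives $z^{N.S.} \le \eta^{\V}(\Gp) \cdot z^{I} \le \tfrac{2\Delta - 1}{\Delta} \cdot z^{I}$. There is essentially no obstacle here: all the work has been absorbed into the tree MSS lemma (the BFS-based labeling procedure), and the corollary is a straightforward transcription of its uniform-coverage conclusion into the language of the fractional LP.
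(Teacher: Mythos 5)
Your proposal is correct and matches the paper's proof exactly: both take the $2\Delta-1$ mixed stable sets from the tree MSS lemma, assign each weight $1/\Delta$, use the exact-$\Delta$-coverage property to verify feasibility in \eqref{eq:etadefn}, and read off the objective value $\frac{2\Delta-1}{\Delta}$. No gaps.
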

	
	\begin{proof}
		Consider the mixed stable sets $\M_i = E_i \cup V_i$ (for $i \in [2\Delta -1]$) of $H$ obtained from Lemma \ref{lemma:treeMSS}. Since each vertex of $H$ is covered exactly $\Delta$ times by $\{\M_i\}_{i \in 2\Delta - 1}$, we have that setting $y_{\M_i} = \frac{1}{\Delta}$ for all $i$ (and $y_\M = 0$ otherwise) yields a feasible solution for the mixed fractional chromatic number program \eqref{eq:etadefn} of value $\frac{2\Delta -1}{\Delta}$, proving the result.\myqed
	\end{proof}


	\subsubsection{Proof of Theorem \ref{thm:LBstar}}


	Fix $\epsilon > 0$; we construct an instance where $z^{N.S.} \ge \left(\frac{2\Delta-1}{\Delta} - \epsilon \right) \cdot z^I$. The construction require the existence of the so-called \emph{affine designs}.

\begin{definition} Given $n \in \mathbb{Z}_{++}$, we call an \emph{affine $n$-design} a
collection $\mathcal{F}_1, \dots,  \mathcal{F}_n$ where each
$\mathcal{F}_i$ is a family of $n$-subsets of $[n^2]$ satisfying:
\begin{enumerate}
\item For any $i \in [n]$, the sets in $\F_i$ partition $[n^2]$
\item For any $i \neq j \in [n]$ and $A \in \mathcal{F}_i$ and $B \in
\mathcal{F}_j$, we have $|A \cap B| \leq 1$.
\end{enumerate}
\end{definition}

\begin{theorem}[\cite{colbourndi:2006}, Part VII, Point 2.17] For every prime $n$, an affine n-design exists.
\end{theorem}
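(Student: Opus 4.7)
The plan is to invoke the standard construction of an affine plane of order $n$. Since $n$ is prime, $\mathbb{F}_n := \mathbb{Z}/n\mathbb{Z}$ is a field, so one can identify the ground set $[n^2]$ with $\mathbb{F}_n \times \mathbb{F}_n$. For each $i \in \{0, 1, \ldots, n-1\}$ I would define the family $\F_i$ to consist of the $n$ parallel ``lines of slope $i$'':
$$\F_i = \bigl\{\{(x, ix + c) : x \in \mathbb{F}_n\} : c \in \mathbb{F}_n\bigr\}.$$
Each member is an $n$-subset of $\mathbb{F}_n \times \mathbb{F}_n \cong [n^2]$, so the cardinality condition in the definition of an affine $n$-design is automatic, and we have the required number of families $\F_1, \ldots, \F_n$ (one slope per index).

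The two design properties then reduce to elementary linear algebra over $\mathbb{F}_n$. For the partition property: fixing $i$, every point $(x_0, y_0)$ lies on the unique line $L_{i, c}$ with $L_{i,c} := \{(x, ix+c) : x \in \mathbb{F}_n\}$ and $c = y_0 - i x_0$, so the $n$ members of $\F_i$ are pairwise disjoint and cover $\mathbb{F}_n \times \mathbb{F}_n$. For the intersection property: given distinct $i, j$ and intercepts $c, c'$, a common point of $L_{i,c}$ and $L_{j,c'}$ must satisfy $(i - j)x = c' - c$, which I would then solve in $\mathbb{F}_n$ to produce a unique $x$ (and hence a unique intersection point, giving intersection size exactly one, which is at most one as required).

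The single place where primality is used, and essentially the only conceptual obstacle, is in solving the equation $(i-j)x = c' - c$: I need $i - j$ to be invertible in $\mathbb{F}_n$ for every pair of distinct $i, j \in \{0, \ldots, n-1\}$, which holds precisely because primality of $n$ makes $\mathbb{F}_n$ a field. For composite $n$ this fails (taking $i = 0$ and $j$ a nontrivial divisor of $n$ produces lines meeting in more than one point), so the restriction to prime $n$ in the statement is intrinsic to this construction; broader existence would require passing to the finite field $\mathbb{F}_q$ for a prime power $q$, but that extension is not needed here.
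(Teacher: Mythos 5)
Your proof is correct. The paper does not actually prove this statement---it is cited to the Handbook of Combinatorial Designs (Colbourn--Dinitz)---and your construction via the $n$ parallel classes of non-vertical lines $\{(x, ix+c) : x \in \mathbb{F}_n\}$ in the affine plane $\mathbb{F}_n \times \mathbb{F}_n$ is precisely the standard construction underlying that citation: each slope class partitions $[n^2]$ into $n$-subsets, and primality guarantees $i-j$ is invertible so that lines of distinct slopes meet in exactly one point, which is all the definition requires. Your observation that only $n$ of the $n+1$ parallel classes of the full affine plane are needed (the vertical lines are discarded) and that the argument genuinely breaks for composite $n$ are both accurate; the proof is a welcome self-contained replacement for the external reference.
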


	So consider a prime number $n \ge \Delta$ and let $\F_1, \ldots, \F_n$ be an affine $n$-design. For a set $A \in \F_i$ we use $\chi_A \in \{0,1\}^{n^2}$ to denote the indicator vector of the set $A$.
	
	We will construct a packing IP in $\mathbb{R}_+^{n^2+\Delta}$ and partition the $n^2 + \Delta$  variables into $\Delta +1$ blocks $\mathcal{J} = \{J_0,\ldots, J_{\Delta}\}$ by setting $J_0 = \{1,\ldots, n^2\}$ and $J_i = \{n^2+i\}$, for $i = 1,\ldots, \Delta$. To simplify the notation we use $x \in \mathbb{R}^{n^2}$ to represent the variables
in $J_0$, and $y_i$, $i = 1, \ldots, \Delta$, to represent the variables in $J_i$ respectively. 

	Let $P_i$ be the polytope in $\R^{n^2 + \Delta}$ given by the convex hull of the points 
	\begin{align*}
		\left\{(x, y_1, \ldots, y_\Delta) \in \{0,1\}^{n^2 + \Delta} \bigmid \sum_j x_j \le n \textrm{ and, } \textrm{if $y_i = 1$, then } x \le \chi_A \textrm{ for some } A \in \F_i \right\};
	\end{align*}
explicitly, this is the set of solutions satisfying 
	\begin{align}	
		&\sum_j x_j \le n\notag\\
		&x_a + x_b + y_i \le 2 ~~~~\forall a \in A, b \in B, ~A \neq B \in \F_i \label{eq:designDown}\\
		&(x, y_1, \ldots, y_\Delta) \in [0,1]^{n^2 + \Delta}.\notag
	\end{align}
	Then the desired IP $(\textup{P}$) is obtained by considering the integer solutions common to all these polytopes:
	\begin{align*}
		\max &\sum_{j \in [n^2]} x_j + \left(\frac{n-1}{\Delta-1}\right) \sum_{j \in [\Delta]} y_j\\
		& (x, y_1, \ldots, y_{\Delta}) \in \bigcap_{i \in [\Delta]} P_i   \cap \Z^{n^2 + \Delta}.
	\end{align*}
	Again we get the following interpretation for the feasible solutions for this problem: in any solution $(x, y) \in \{0,1\}^{n^2 + \Delta}$, $\sum_j x_j \le n$ and for all $i \in [\Delta]$
	\begin{align}
		\textrm{if $y_i= 1$, then $x \le \chi_A$ for some set $A \in \F_i$}\label{eq:tightDesign}
	\end{align}
	
	Let $P$ denote the integer set corresponding to this problem.
	From the explicit description of the $P_i$'s we see that this is packing integer program whose induced graph $\Gp$ is a star with maximum degree $\Delta$.
	
	The intuition behind this construction is the following: first, maximizing the objective function over just $P_i \cap \Z^{n^2 + \Delta}$ (or equivalently over $P_i$) gives value $n + \left(\frac{n - 1}{\Delta - 1}\right) \cdot \Delta \approx 2n$ (by taking $x = \chi_A$ for any $A \in \F_i$, $y_j = 1$ for all $j$). Moreover, recall that the natural sparse closure w.r.t. $\J$ of the full program $(\textup{P})$ uses cuts that are only supported in $(x, y_i)$, for $i \in [\Delta]$; thus, roughly speaking, this closure sees each $P_i \cap \Z^{n^2 + \Delta}$ independently, and not really capturing the fact they are being intersected. Thus, optimizing over the natural sparse closure w.r.t. $\J$ still gives value $\approx 2n$. However, due to the fact the sets across the design's $\F_i$'s are almost disjoint, intersecting the regions $P_i \cap \Z^{n^2 + \Delta}$ kills most of the solutions. A bit more precisely, the almost disjointness in the affine design and expression \eqref{eq:tightDesign} imply that the best solution either sets many of the $y_i$'s to 1 and almost all $x_j$'s to 0, or sets all $x_j$'s to 1 and few $y_i$'s to 0; these solutions gives value $\approx n$. This gives the desired gap of $\approx 2$ between the natural sparse closure and the original IP.
	
	To make this formal, we start with the following lemma.
	
	\begin{lemma} \label{lemma:designNS}
		Setting $x = (\frac{1}{n}, \ldots, \frac{1}{n})$ and $y_j = 1$ for all $j$ gives a feasible solution to the natural sparse closure $P^{N.S}$. Thus, $z^{N.S.} \ge n + \left(\frac{n-1}{\Delta-1}\right) \cdot \Delta$.
	\end{lemma}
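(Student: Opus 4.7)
The plan is to verify directly the two conditions characterizing membership in the natural sparse closure. Since every row of the formulation has support contained in some $J_0 \cup J_i$, the natural support list equals $\mathcal{V} = \{J_0 \cup J_i : i \in [\Delta]\}$; by Observation~\ref{prop:fund}, the point $(x^\star, y^\star) := (\tfrac{1}{n}\mathbf{1}, \mathbf{1})$ belongs to $P^{N.S.}$ if and only if (a) $(x^\star, y^\star) \in P^{LP} = \bigcap_i P_i$ and (b) for each $i \in [\Delta]$ the slice $(x^\star, y_i^\star) = (\tfrac{1}{n}\mathbf{1}, 1)$ lies in the projection $P^I|_{J_0 \cup J_i}$.

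Part (a) is routine: the cardinality bound $\sum_j x_j^\star = n$ is met with equality, the box constraints hold trivially, and each inequality in \eqref{eq:designDown} evaluates to $\tfrac{2}{n} + 1 \le 2$ since $n \ge 2$. The heart of the proof is part (b), which I would establish by exhibiting $(\tfrac{1}{n}\mathbf{1}, 1)$ as an explicit convex combination of projections of integer-feasible points of $P$. For each $A \in \mathcal{F}_i$ consider the $0/1$ vector $u_A \in \Z^{n^2+\Delta}$ defined by $x = \chi_A$, $y_i = 1$, and $y_j = 0$ for $j \neq i$. To show $u_A \in P$, the only non-obvious checks are the coupling constraints \eqref{eq:designDown}: in $P_i$ they hold because $\mathcal{F}_i$ partitions $[n^2]$, so for distinct $A', B' \in \mathcal{F}_i$ at most one of $a \in A'$ and $b \in B'$ can lie in $A$, giving $(\chi_A)_a + (\chi_A)_b + 1 \le 2$; in $P_j$ for $j \neq i$ the constraint reduces to $x_a + x_b \le 2$ since $y_j = 0$, which is immediate.

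Since $\mathcal{F}_i$ partitions $[n^2]$ into $n$ blocks, we have $\sum_{A \in \mathcal{F}_i} \chi_A = \mathbf{1}$, so averaging the projections of the $u_A$'s onto $J_0 \cup J_i$ with uniform weights $\tfrac{1}{n}$ yields exactly $(\tfrac{1}{n}\mathbf{1}, 1)$, establishing (b). Plugging $(x^\star, y^\star)$ into the objective yields $n^2 \cdot \tfrac{1}{n} + \tfrac{n-1}{\Delta-1} \cdot \Delta = n + \tfrac{n-1}{\Delta - 1} \cdot \Delta$, completing the lemma. The main obstacle is part (b); note that the argument only uses the partition property of each individual $\mathcal{F}_i$, whereas the more delicate almost-disjoint intersection property between different $\mathcal{F}_i$'s will be needed separately to establish the matching upper bound on $z^I$.
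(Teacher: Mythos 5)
Your proof is correct and follows essentially the same route as the paper: both reduce membership in $P^{N.S.}$ to checking $(\bar{x},\bar{y}) \in P^{LP}$ plus membership of each slice in the projection of $P^I$, and both realize the slice as the uniform average over $A \in \F_i$ of the integer points $(\chi_A, e^i)$, using only the partition property of $\F_i$. The only difference is that you spell out the feasibility checks of these points in $P_i$ and $P_j$ in more detail than the paper does; the substance is identical.
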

	
	\begin{proof}
		Let $\bar{x} = (\frac{1}{n}, \ldots, \frac{1}{n})$ and $\bar{y} = (1, \ldots, 1)$ denote the desired solution.
		
		We claim that it suffices to prove that $(\bar{x}, e^i)$ belongs to $P^I$ for all $i$, (where $e^i$ is the $i$th canonical basis vector in $\R^{\Delta}$). To see that, first notice that the natural sparse closure w.r.t. $\J$ is 	$P^{N.S.} = \bigcap_{i \in [\Delta]} P^{(x,y_i)}$, where we use $P^{(x,y_i)}$ to denote the sparse closure of $P$ with cuts on variables $(x,y_i)$ (see Definition~\ref{defn:sparseclosure}). 
Using Observations \ref{prop:fund} and \ref{obs:projPack}, it suffices to show $(\bar{x}, \bar{y}) \in P^{LP}$ and $(\bar{x}, e^i) \in P^I$. The former condition can be easily verified via equation \eqref{eq:designDown}, so it suffices to show $(\bar{x}, e^i) \in P^I$ for all $i$.
		
		So fix $i \in [\Delta]$. Consider the collection $\F_i$ and a point of the form $(\chi_A, e^i)$ for any set $A \in \F_i$. By definition of $P_i$, notice that $(\chi_A, e^i)$ belongs to $P_i \cap \Z^{n^2 + \Delta}$. Moreover, notice that for $j \neq i$ we also have $(\chi_A, e^i) \in P_j$: this follows from the facts $\sum_j (\chi_A)_j \le n$ and $e^i_j = 0$. Thus, we have $(\chi_A, e^i) \in P^I = \bigcap_{j \in [\Delta]} P_j \cap \Z^{n^2 + \Delta}$. Then the average $\sum_{A \in \F_i} \frac{1}{n} (\chi_A, e^i)$ belongs to $P^I$; since the sets in $\F_i$ form a partition of $[n^2]$, $\sum_{A \in \F_i} \chi_A = (1,\ldots, 1)$, and hence the average is $\sum_{A \in \F_i} \frac{1}{n} (\chi_A, e^i) = (\bar{x}, e^i) \in P^I$. This concludes the proof. \myqed
	\end{proof}

	The next step is to understand $P$ better. 
	
	\begin{lemma}\label{lemma:designIP}
		For any solution $(x, y_1, \ldots, y_\Delta) \in P$ with $\sum_{i \in [\Delta]} y_i \ge 2$ we have $\sum_{j \in [n^2]} x_j \le 1$.
	\end{lemma}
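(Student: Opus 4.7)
The plan is to leverage the two defining properties of an affine design: first, that each $\F_i$ partitions $[n^2]$, which, combined with the constraints \eqref{eq:designDown}, forces the support of any integer $x$ to lie within a single set of $\F_i$ whenever $y_i=1$; and second, the almost-disjointness property $|A \cap B| \le 1$ across different families, which is what produces the claimed bound once two $y_i$'s are active.

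More concretely, I would first observe that $(x, y_1, \ldots, y_\Delta) \in P \subseteq \bigcap_{i\in[\Delta]} P_i$ is integral, so $x \in \{0,1\}^{n^2}$ and each $y_j \in \{0,1\}$. Fix any $i \in [\Delta]$ with $y_i = 1$. For any two distinct sets $A, B \in \F_i$ and any $a \in A$, $b \in B$, the constraint $x_a + x_b + y_i \le 2$ from \eqref{eq:designDown} gives $x_a + x_b \le 1$. Since the sets in $\F_i$ partition $[n^2]$, every index $j \in [n^2]$ belongs to a unique block of $\F_i$; hence the support $\supp(x) := \{j : x_j = 1\}$ cannot meet two distinct blocks of $\F_i$. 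Thus there exists a (unique) set $A_i \in \F_i$ with $\supp(x) \subseteq A_i$.

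Now assume $\sum_{i \in [\Delta]} y_i \ge 2$ and pick two distinct indices $i \neq j$ with $y_i = y_j = 1$. Applying the previous step to both $i$ and $j$ yields sets $A_i \in \F_i$ and $A_j \in \F_j$ with $\supp(x) \subseteq A_i \cap A_j$. Since the families $\F_i$ and $\F_j$ come from an affine $n$-design with $i \neq j$, the design property gives $|A_i \cap A_j| \le 1$. Therefore $|\supp(x)| \le 1$, and since $x$ is 0/1, we conclude $\sum_{j \in [n^2]} x_j \le 1$.

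There is no real obstacle here; the argument is essentially a direct unpacking of the two combinatorial properties of the affine design through the inequalities \eqref{eq:designDown}. The only mild subtlety is to make sure one uses the integrality of $x$ and $y$ throughout (so that the partition argument bites in each $\F_i$), and to invoke the intersection bound only for two sets coming from \emph{different} families, which is exactly the hypothesis $\sum_i y_i \ge 2$ supplies.
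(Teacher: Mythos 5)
Your proposal is correct and follows essentially the same route as the paper: both arguments observe that each active $y_i$ confines the support of $x$ to a single block of $\F_i$ (the paper cites this as the property ``if $y_i=1$ then $x \le \chi_A$ for some $A \in \F_i$''; you re-derive it from the explicit inequalities \eqref{eq:designDown}), and then both conclude via $|A \cap B| \le 1$ for sets from two distinct families. The only difference is that you unpack the membership condition from the inequality description rather than quoting it, which is a matter of exposition, not substance.
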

	
	\begin{proof}
		Consider $p\neq q$ such that $y_p = y_q = 1$. By definition of $P$, we have that the solution $(x, y_1, \ldots, y_\Delta)$ belongs to $P_p$ and $P_q$. Since $y_p = y_q = 1$, this means that there are sets $A \in \F_p$ and $B \in \F_q$ such that $x \le \chi_A$ and $x \le \chi_B$, which further implies $x \le \chi_{A \cap B}$. But by definition of an affine design $|A \cap B| \le 1$, and hence $\sum_{j \in [\Delta]} x_j \le 1$. This concludes the proof. \myqed
	\end{proof}

	\begin{corollary}\label{cor:designOpt}
		We have $z^I \le \frac{n \Delta - 1}{\Delta -1}$.
	\end{corollary}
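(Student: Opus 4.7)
The plan is to obtain the bound via a short case analysis on the value of $\sum_{i \in [\Delta]} y_i$ in an arbitrary feasible solution, using Lemma~\ref{lemma:designIP} as the key structural tool. Since both $x$ and $y$ are $0/1$-valued in any feasible integer solution, the only interesting dichotomy is between solutions that activate at most one $y_i$ and solutions that activate at least two.

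First I would split the analysis into two cases. In the case $\sum_{i} y_i \le 1$, I use only the trivial bound $\sum_{j \in [n^2]} x_j \le n$ (which appears in the description of each $P_i$), giving objective value at most $n + \frac{n-1}{\Delta - 1} \cdot 1$. A quick arithmetic check confirms that this equals $\frac{n\Delta - 1}{\Delta - 1}$. In the case $\sum_{i} y_i \ge 2$, Lemma~\ref{lemma:designIP} forces $\sum_{j} x_j \le 1$, so the objective is at most $1 + \frac{n-1}{\Delta-1} \cdot \Delta$, and this also simplifies to $\frac{n\Delta - 1}{\Delta - 1}$.

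The two cases thus meet at exactly the same threshold, giving $z^I \le \frac{n\Delta - 1}{\Delta - 1}$. There is no real obstacle here beyond verifying the algebraic identity and noting that the objective coefficient $\frac{n-1}{\Delta - 1}$ was chosen precisely to balance the two regimes. Combined with Lemma~\ref{lemma:designNS}, which gives $z^{N.S.} \ge n + \frac{(n-1)\Delta}{\Delta - 1}$, this corollary will yield the ratio $z^{N.S.}/z^I$ approaching $\frac{2\Delta - 1}{\Delta}$ as $n \to \infty$, delivering the tightness claim of Theorem~\ref{thm:LBstar}.
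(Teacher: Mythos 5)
Your proof is correct and is essentially identical to the paper's own argument: the same case split on $\sum_i y_i \le 1$ versus $\sum_i y_i \ge 2$, the same use of the bound $\sum_j x_j \le n$ in the first case and of Lemma~\ref{lemma:designIP} in the second, and the same arithmetic showing both cases give exactly $\frac{n\Delta-1}{\Delta-1}$. Nothing to add.
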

	
	\begin{proof}
		Consider any feasible solution $(x, y_1, \ldots, y_\Delta)$ to $P^I$. From the first constraint in \eqref{eq:designDown} we have $\sum_j x_j \le n$. Thus, if $\sum_i y_i \le 1$, the solution has value at most $n + \left(\frac{n-1}{\Delta -1}\right) = \frac{n \Delta - 1}{\Delta - 1}$; on the other hand, using Lemma \ref{lemma:designIP}, if $\sum_i y_i \ge 2$ then the solution has value at most $1 + \left(\frac{n-1}{\Delta -1}\right) \Delta = \frac{n \Delta - 1}{\Delta -1}$. Together these give the desired upper bound.\myqed
	\end{proof}
	
	Lemma \ref{lemma:designNS} and Corollary \ref{cor:designOpt} give that
	\begin{align*}
		\frac{z^{N.S.}}{z^I} \geq
\left( n + \left(\frac{n-1}{\Delta -1}\right) \cdot \Delta\right) \frac{\Delta-1}{n \Delta - 1} = \frac{2n \Delta - n - \Delta}{n \Delta-1} = \frac{2 \Delta - 1 - \Delta/n}{\Delta - 1/n}.
	\end{align*}
	Since $\lim_{n \rightarrow \infty} \frac{2 \Delta - 1 - \Delta/n}{\Delta - 1/n} = \frac{2\Delta -1}{\Delta}$, for a sufficiently large choice of $n$ we get $z^{N.S.} \ge \left(\frac{2 \Delta - 1}{\Delta} - \epsilon\right) z^I$. This concludes the proof of Theorem \ref{thm:LBstar}.

\remove{
\begin{proof}
Given $K$ and $\epsilon >0$, let $t \in \mathbb{Z}_{++}$ be a prime number larger than $$\textup{max}\left( \left\lceil \frac{ (k - 1)^2 - \epsilon K}{\epsilon K^2} \right\rceil, K\right).$$ We construct an IP with $\mathbb{R}_+^{t^2+K}$ variables. We partition the $t^2+K$  variables into $k+1$ blocks $\mathcal{J} = \{J_0,\ldots, J_{K}\}$ by setting $J_0 = \{1,\ldots, t^2\}$ and $J_i = \{t^2+i\}$, for $i = 1,\ldots, K$.

To simplify the notation we use $x \in \mathbb{R}_+^{t^2}$ to represent the variables
in $J_0$, and $y_i$, $i = 1, \ldots, K$, to represent the variables in
$J_i$ respectively. Further, we let $\mathcal{F}^1,\ldots,\mathcal{F}^t$ be a t-design. Then we
construct the following IP.
\begin{eqnarray}
\max  && \sum_{j=1}^{t^2} x_j + \sum_{j = 1}^K \frac{t-1}{K-1}y_j
\notag
\\
s.t. &&\sum_{j=1}^{t^2} x \leq t \label{eq:bndcons}\\
&&x_i+ x_j +y_h \leq 2 \ \forall h \in [K], F^h_r \in
\mathcal{F}^h, F^h_s \in \mathcal{F}^h, 1\leq r< s \leq t,\forall i
\in F^h_r,\forall j \in F^h_s \label{eq:keycons}\\
&& x_j\leq 1 \ \forall j \in [t^2] \notag \\
&& y_j \leq 1 \ \forall j \in [K] \notag \\
 &&x \in \mathbb{Z}_+^{t^2}, y \in \mathbb{Z}_+^K \notag
\end{eqnarray}
Clearly $G^P_{\mathcal{J},A}$ is a star.

\paragraph{Claim 1.} $z^I \leq \frac{tK-1}{K-1}$.  Consider any integer feasible solution. Then consider the following cases based on the number of non-zero $y$ variables: 
\begin{enumerate}
\item If $y_j = 0$ for all $j \in [K]$, then due to the constraint (\ref{eq:bndcons}), we obtain that the objective function value of such a solution is at most $t$.
\item If there is exactly one of $y_i = 1$, then using the same argument as above, we obtain that the objective function value is  at most $ t+ \frac{t-1}{K-1} = \frac{tK-1}{K-1}$.
\item If there exists $p, q \in [K]$ such that $y_p = y_q = 1$, then we claim that there exists at most one $i \in [t^2]$ such that $x_i = 1$. Assume by contradiction that $x_i = x_j = 1$. 
Clearly the constraint $x_i + x_j + y_p \leq 2$ does not exist in the set of constraints (\ref{eq:keycons}). Equivalently, there must exist some $r^{*}$ such that $i,j \in F^p_{r^*}$. Using the same argument, there exist some $s^{*}$ such that $i,j \in F^q_{s^*}$. However this implies $|F^p_{r^*} \cap F^q_{s^*}| \geq 2$, contradiction. Thus, in this case the objective function is at most $ 1 + K\frac{t-1}{K-1} = \frac{tK-1}{K-1}$.
\end{enumerate}
Therefore we obtain that $z^I \leq \max\{t,\frac{tK-1}{K-1} \} = \frac{tK-1}{K-1}$.$\diamond$

\paragraph{Claim 2.} $z^{N.S.} \geq \frac{2tk - t - K}{K - 1}$. In order to verify this, we show that the point $(x',y') \in \mathcal{R}_+^{t^2}
\times \mathcal{R}_+^{K}$ where $x'_j = 1/t$ for all $j \in [t^2]$
and $y'_j = 1$ for all $j \in [K]$ belongs to $P^{N.S.}$.
Since
\begin{equation*}
P^{N.S.} = \bigcap_{i = 1}^K P^{(\{v_0,v_i\})}
\end{equation*}
it is sufficient to prove $(x',y') \in P^{(\{v_0,v_i\})}$ for all $i
\in [K]$.

WLOG, we may assume $i = 1$. Note that $(x',y') \in P^{(\{v_0,v_1\})}$ if and only if $(x',y'_1) \in \textup{proj}_{(\{v_0,v_1\})}(P^I)$ and $(x',y') \in P^{LP}$. Clearly, $(x',y') \in P^{LP}$. We now verify that $(x',y'_1) \in \textup{proj}_{(\{v_0,v_1\})}(P^I)$. 

Consider the points $(\bar{x}^p, \bar{y})$, $p \in [t]$ defined as
\begin{enumerate}
\item $\bar{x}^p_j = 1$ if $j \in F^1_p$ and $\bar{x}^p_j =
0$ otherwise.
\item $\bar{y}_1 = 1$ and $\bar{y}_j = 0$ for $j = 2,\ldots, K$.
\end{enumerate}
It is straightforward to verify that $(\bar{x}^p, \bar{y})$ is a feasible integral
solution for all $p \in [t]$. Thus $$\sum_{p = 1}^t \left(\frac{1}{t}\right)(\bar{x}^p, \bar{y}) \in P^I.$$
Since $\mathcal{F}^1,\ldots,\mathcal{F}^t$ is a t-design, we have that $\bigcup_{p = 1}^t F^1_p = [t^2]$. Therefore, $\sum_{i = 1}^t
(1/t)\bar{x}^i = x'$. Together with the fact that $y'_1 = \bar{y}_1 = 1$, we have that
$(x',y'_1) \in \textup{proj}_{(\{v_0,v_1\})}(P^I)$.

Therefore we have that
\begin{equation*}
z^{N.S.} \geq \sum_{j=1}^{t^2} x'_j + \sum_{j = 1}^K
\frac{t-1}{K-1}y'_j = t + \frac{K(t-1)}{K-1} = \frac{2tk-t-K}{K-1}. \ \ \ \diamond
\end{equation*}  
Using Claim 1 and Claim 2, we obtain that 
\begin{equation*}
\frac{z^{N.S.}}{z^I} \geq
\frac{\frac{2tk-t-K}{K-1}}{\frac{tK-1}{K-1}} = \frac{2tk-t-K}{tK-1}.
\end{equation*}

Since $t \geq \frac{ (k - 1)^2 - \epsilon K}{\epsilon K^2}$, it is straightforward to verify that$\frac{z^{N.S.}}{z^I}
\geq \frac{2K-1}{K} - \epsilon$. \myqed
\end{proof}
}

\subsubsection{Proof of the first part of Theorem~\ref{thm:nscycle}: upper bound on $z^{N.S.}$}


	Consider the packing interaction graph $\Gp$, which is a cycle of length $K$. Notice that the natural sparse closure in this case corresponds to considering the support list $\V$ being simply the edges of $\Gp$. Thus, to prove the first part of Theorem \ref{thm:nscycle} is suffices to upper bound the fractional mixed chromatic number $\eta^{E(\Gp)}(\Gp)$.
	
	We can work more abstractly to simplify things: let $H = (V,E)$ be the cycle $v_0 - v_1 - \ldots - v_{K-1} - v_0$ on $K$ nodes, and we need to upper bound $\eta^E(H)$. To further simplify the notation, we identify $v_i$ with $v_{i \Mod{K}}$ for $i \ge K$. We consider the different cases depending on $K \Mod{3}$.

\remove{
Given the set of nodes $V = \{v_1, \ldots , v_K\}$ of $G^P_{A, \mathcal{J}}(V, E)$, we use the notation that for $i > K$, $v_{i} := v_{i (\textup{mod} \ K)}$. The set of edges $E = \{e_1, \ldots, e_K\}$ where $e_i = (v_i, v_{i+1})$. Since we are studying natural sparse closure, we have that $\mathcal{V}: = \bigcup_{i = 1}^K\{v_i,  v_{i +1}\} $. We will prove the following statements.
\begin{enumerate}
\item If $ K  = 3k, k\in \mathbb{Z}_{++}$, $\eta^{\mathcal{V}}_{(G^P_{A, \mathcal{J}})}\leq \frac{3}{2}$.
\item If $ K  = 3k +1, k\in \mathbb{Z}_{++}$, $\eta^{\mathcal{V}}_{(G^P_{A, \mathcal{J}})}\leq \frac{3k+1}{2k}$.
\item If $ K  = 3k +2, k\in \mathbb{Z}_{++}$, $\eta^{\mathcal{V}}_{(G^P_{A, \mathcal{J}})}\leq \frac{3k+2}{2k+1}$.
\end{enumerate}
}

\paragraph{Case 1: $K = 3k$, $k \in \mathbb{Z}_{++}$.} For $i = 0,1,2$, let $\M_i$ denote the set of edges $\{v_j, v_{j+1}\}$ where $j = i \Mod{3}$. It is clear that each $\M_i$ is a mixed stable set for $H$ subordinate to $E$. Moreover, since $\bigcup_{i = 0}^3 \M_i = E$ covers each node of $H$ exactly twice, we can find a solution for the fractional mixed chromatic number LP \eqref{eq:etadefn} by setting $y_{\M_i} = \frac{1}{2}$ for $i=0,1,2$. This gives the desired bound $\eta^E(H) \le \frac{3}{2}$.

\begin{figure}[h!]
	\centering    
	\caption{Constructions of all mixed stable sets for 6-cycle}
	\label{fig6c}
	\subfigure{\includegraphics[width=120mm]{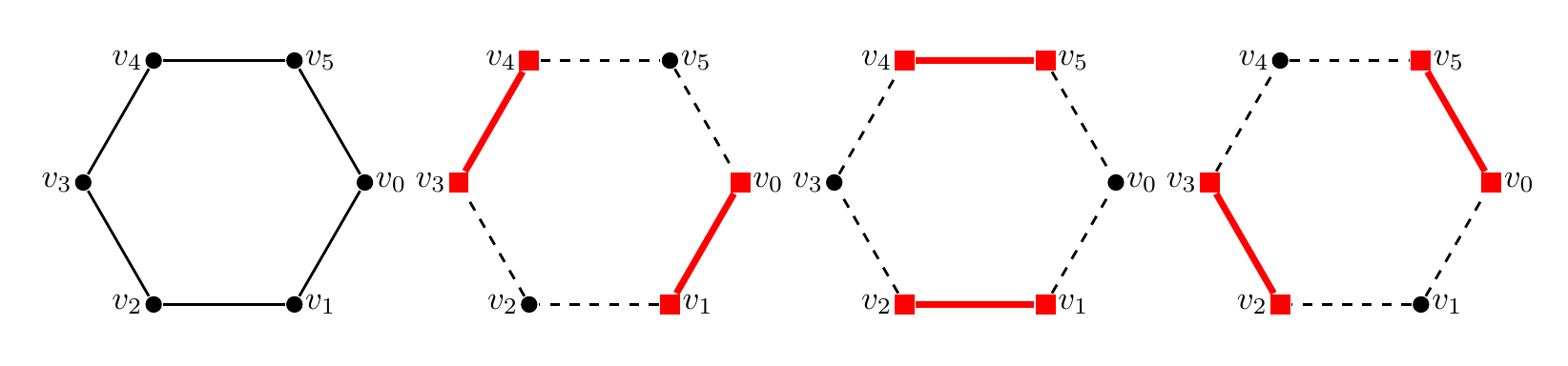}}
\end{figure}
\paragraph{Case 2: $K = 3k + 1$, $k \in \mathbb{Z}_{++}$.} We show $\eta^E(H) \le \frac{3k + 1}{2k}$. If $k = 1$, we have that $H$ is a 4-cycle and define $\M_0 = \{(v_0,v_1)\}$, $\M_1 = \{(v_1, v_2)\}$, $\M_2 = \{(v_2, v_3\})$ and $\M_3 = \{(v_3, v_0)\}$. Clearly these $\M_i$'s are mixed stable sets for $H$ subordinate to $E$ and $\bigcup_i \M_i$ covers each node of $H$ exactly twice; then as in the previous case, this gives $\eta^E(H) \le \frac{4}{2} = 2 = \frac{3k + 1}{2k}$.

	For $k \ge 2$, define
\begin{eqnarray*}
\mathcal{M}_{i} =  \left\{ \{v_{i}, v_{i+1}\}, \{v_{i + 3}, v_{i + 4}\}, \ldots, \{v_{3(k-2)+i}, v_{3(k-2)+i + 1}\}, \{v_{3(k-1)+i}, v_{3(k-1)+i + 1}\}\right\}
\end{eqnarray*}
for $i = \{0, \dots, 3k\}$. 
It is straightforward to check that each $\mathcal{M}_i$ is a mixed stable set subordinate to $E$ and that $\bigcup_{i = 0}^{3k} \mathcal{M}_i$ covers every node exactly $2k$ times. Thus again we get $\eta^E(H) \le \frac{3k + 1}{2k}$.

\begin{figure}[h!]
	\centering    
	\caption{Constructions of all mixed stable sets for 7-cycle}
	\label{fig7c}
	\subfigure{\includegraphics[width=120mm]{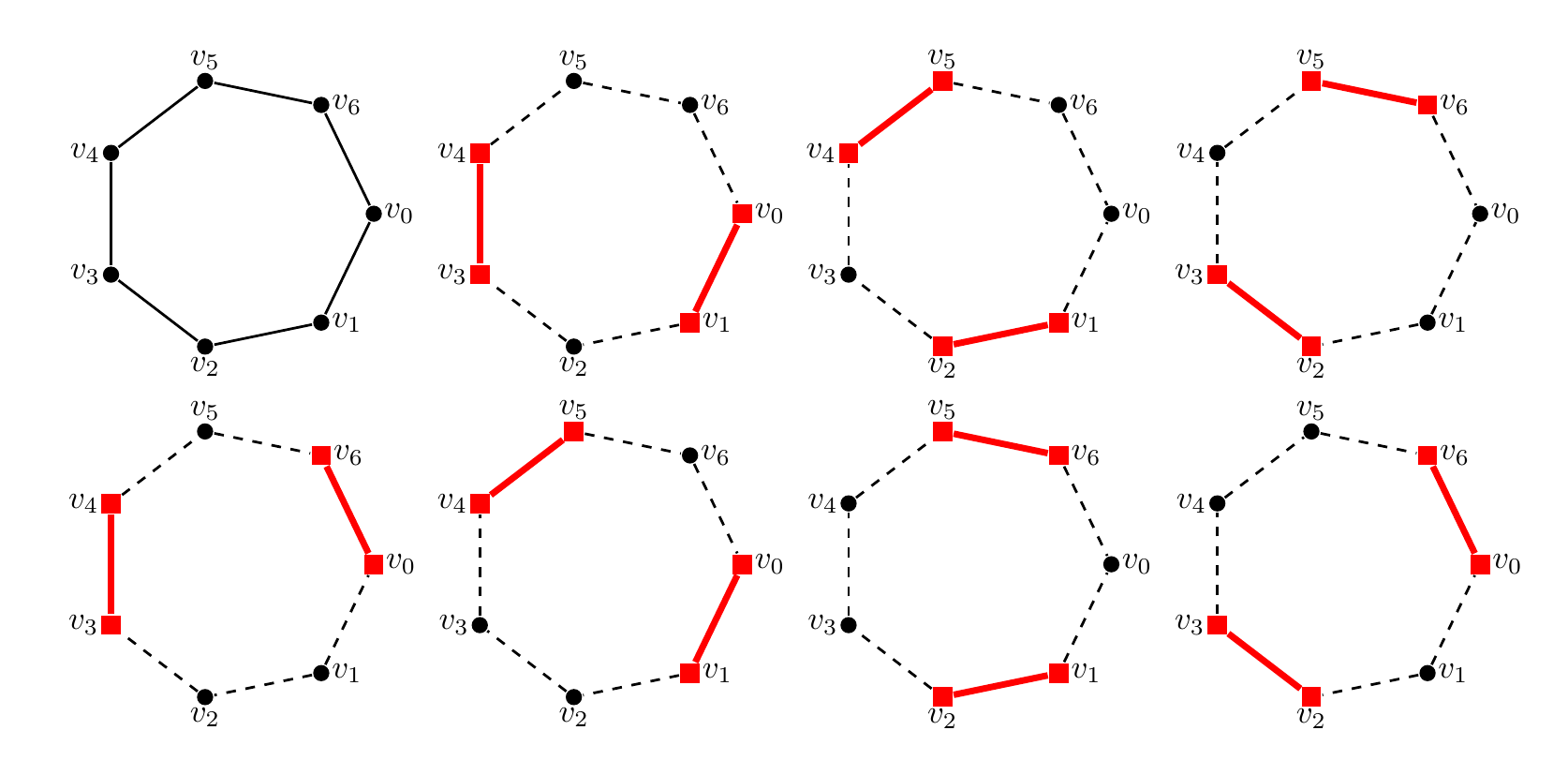}}
\end{figure}
\paragraph{Case 3: $K = 3k + 2$, $k \in \mathbb{Z}_{++}$.} Let
\begin{eqnarray*}
\mathcal{M}_{i} =  \left\{ \{v_{i}, v_{i+1}\}, \{v_{i + 3}, v_{i + 4}\}, \ldots, \{v_{3(k-2)+i}, v_{3(k-2)+i + 1}\}, \{v_{3(k-1)+i}, v_{3(k-1)+i + 1}\}, \{v_{3k + i}\} \right\}
\end{eqnarray*}
for $i = \{0, \dots, 3k + 1\}$. It is straightforward to check that each $\mathcal{M}_i$ is a mixed stable set subordinate to $E$ and that $\bigcup_{i = 0}^{3k+1} \mathcal{M}_i$ covers every node exactly $2k + 1$ times. Thus we have $\eta^E(H) \le \frac{3k+2}{2k + 1}$. This concludes the proof of the first part of the theorem.

\begin{figure}[h!]
	\centering    
	\caption{Constructions of all mixed stable sets for 5-cycle}
	\label{fig5c}
	\subfigure{\includegraphics[width=90mm]{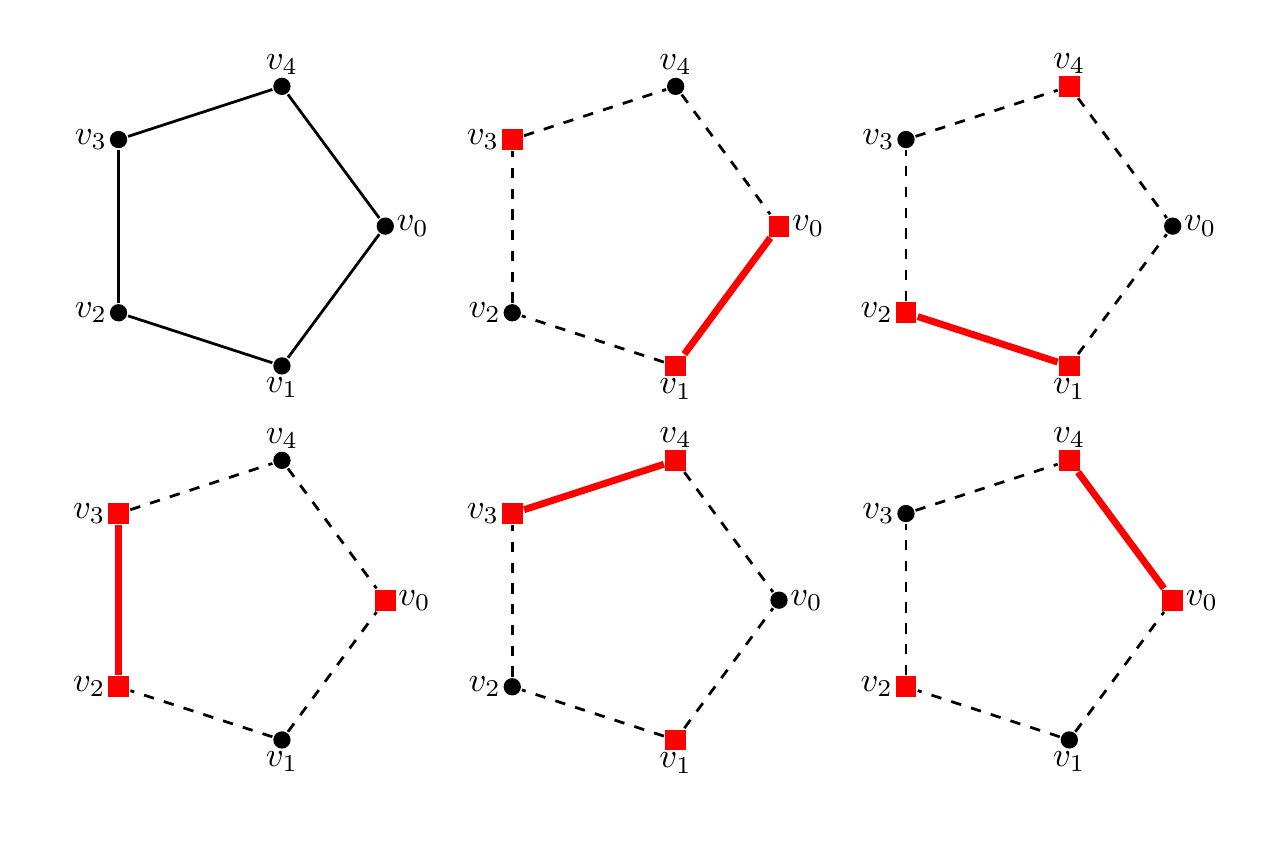}}
\end{figure}

\subsubsection{Proof of second part of Theorem \ref{thm:nscycle}: tight instances}

	The construction of the tight instances is similar to the one used in Theorem \ref{thm:LBstar}. 
	So consider a prime number $n \ge K$ and let $\F_1, \ldots, \F_n$ be an affine $n$-design. For a set $A \in \F_i$ again we use $\chi_A \in \{0,1\}^{n^2}$ to denote the indicator vector of the set $A$.

	We will construct a packing IP with $K n^2$ variables, which are partitioned into $K$ equally sized blocks $\mathcal{J} = \{J_0,\ldots, J_{K - 1}\}$, namely $J_i = \{n^2 i, n^2 i + 1, \ldots, n^2 i + n^2 - 1\}$. To simplify the notation, we use $x^i \in \mathbb{R}^{n^2}$ to represent the variables corresponding to $J_i$, so a solution of the IP has the form $(x^0, \ldots, x^{K-1})$. For $i \ge K$, we use $x^i$ to denote $x^{i \Mod{K}}$.

	First, define the integer set $Q = \{x \in \{0,1\}^{n^2} \mid \ones^T x \le n\}$. Then, for $i \in \{0, \ldots, K-1\}$ let $P_i$ be the polytope in $\R^{K n^2}$ given by the convex hull of the points 
	\begin{align*}
		\left\{(x^0, \ldots, x^{K-1}) \in Q^K \bigmid 
			\begin{array}{l}
				\textrm{if $x^i \neq 0$, then } x^{i+1} \le \chi_A \textrm{ for some } A \in \F_i \textrm{, and}\\
				\textrm{if $x^{i + 1} \neq 0$, then } x^i \le \chi_A \textrm{ for some } A \in \F_i
			\end{array}\right\};
	\end{align*}
explicitly, this is the set of solutions satisfying 
	\begin{align}	
		&\ones^T x^j \le n ~~~~~~~~~~~~~~~~~~~~\forall j\notag\\
		&x^i_a + x^i_b + x^{i+1}_c \le 2 ~~~~~~~~\forall a \in A, b \in B, ~A \neq B \in \F_i, ~\forall c\label{eq:designDown2}\\
		&x^{i+1}_a + x^{i+1}_b + x^i_c \le 2 ~~~~~~\forall a \in A, b \in B, ~A \neq B \in \F_i, ~\forall c \notag\\
		&(x^0, \ldots, x^{K-1}) \in [0,1]^{K n^2}.\notag
	\end{align}


	Then the desired IP $(\textup{P}$) is obtained by considering the integer solutions common to all these polytopes:
	\begin{align*}
		\max &\sum_{i = 0}^{K-1} \ones^T x^i \\
		& (x^0, \ldots, x^{K-1}) \in \bigcap_{i = 0}^{K-1} P_i \cap \Z^{K n^2}.
	\end{align*}
	Again we get the following interpretation for the feasible solutions for this problem: in any solution $(x^0, \ldots, x^{K-1}) \in \{0,1\}^{K n^2}$, $\ones^T x^i \le n$ for all $i$, and also for all $i$
	\begin{align}
		&\textrm{if $x^i \neq 0$, then $x^{i+1} \le \chi_A$ for some set $A \in \F_i$, and}\label{eq:tightDesign2} \\
		&\textrm{if $x^{i+1} \neq 0$, then $x^i \le \chi_A$ for some set $A \in \F_i$}. \notag
	\end{align}
	
	Let $P$ denote the integer set corresponding to this problem. From the explicit description of the $P_i$'s we see that this is packing integer program whose induced graph $\Gp$ is a $K$-cycle.

	We now consider the natural sparse closure $P^{N.S.}$ and the integer hull $P^I$
for this problem and lower bound the ratio $z^{N.S.}/z^I$. For that, given $x = (x^0, \ldots, x^{K-1})$, let $\high(x) = \{ i \mid \ones^T x^i \ge 2\}$, namely the set of block of variables with ``high'' value. We say that three integers are \emph{adjacent mod $K$} if they are of the form $i \Mod{K}, i+1 \Mod{K}, i+2 \Mod{K}$. 
	
	\begin{lemma}\label{lemma:adjacent}
		For any solution $x \in P$, the set $\high(x)$ does not contain any three adjacent mod $K$ integers. 
	\end{lemma}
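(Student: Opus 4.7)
The plan is to argue by contradiction: suppose some solution $x \in P$ has three adjacent-mod-$K$ indices $i, i+1, i+2$ all lying in $\high(x)$, so $\ones^T x^j \ge 2$ for each $j \in \{i, i+1, i+2\}$ (indices taken mod $K$), and in particular each of these three blocks is nonzero. The strategy is to squeeze the middle block $x^{i+1}$ between two design constraints coming from the two edges incident to it in the cycle, and then invoke the almost-disjointness property of the affine design to force $\ones^T x^{i+1} \le 1$, contradicting $i+1 \in \high(x)$.

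Since $x$ lies in $P \subseteq P_i \cap P_{i+1}$, we can invoke the characterization \eqref{eq:tightDesign2}. Applied to $P_i$: because $x^i \neq 0$, there is an $A \in \F_i$ with $x^{i+1} \le \chi_A$. Applied in the ``symmetric'' direction to $P_{i+1}$: because $x^{i+2} \neq 0$, there is a $B \in \F_{i+1}$ with $x^{i+1} \le \chi_B$. Intersecting the two bounds yields $\supp(x^{i+1}) \subseteq A \cap B$.

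Because the cycle length $K$ satisfies $K \le n$, the designs $\F_0, \ldots, \F_{K-1}$ used to wire up successive edges of the cycle are distinct elements of the affine $n$-design, so in particular $\F_i \neq \F_{i+1}$ (including the wrap-around case). The defining intersection property of an affine design then gives $|A \cap B| \le 1$, and hence $\ones^T x^{i+1} \le 1$, delivering the desired contradiction.

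The only mild subtlety in the argument is the need to invoke \emph{both} clauses of \eqref{eq:tightDesign2}, the ``downward'' clause at $P_i$ and the ``upward'' one at $P_{i+1}$, so that the middle block $x^{i+1}$ inherits two different design constraints simultaneously; once this is set up, the rest of the reasoning mirrors Lemma~\ref{lemma:designIP}.
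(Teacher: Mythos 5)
Your proof is correct and follows essentially the same route as the paper's: both arguments sandwich the middle block $x^{i+1}$ between the downward clause of \eqref{eq:tightDesign2} at index $i$ (triggered by $x^i \neq 0$) and the upward clause at index $i+1$ (triggered by $x^{i+2} \neq 0$), then apply the affine-design intersection bound $|A \cap B| \le 1$ to conclude $\ones^T x^{i+1} \le 1$. Your explicit remark that $n \ge K$ guarantees the two design families are distinct is a detail the paper leaves implicit, but the argument is the same.
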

	
	\begin{proof}
		By contradiction, assume that $\high(x)$ contains the integers $i \Mod{K}, i + 1 \Mod{K}$, and $i + 2 \Mod{K}$. In particular, all of $x^i$, $x^{i+1}$ and $x^{i+2}$ are different from 0, and hence expression \eqref{eq:tightDesign2} implies that $x^{i+1} \le \chi_A$ and $x^{i+1} \le \chi_B$ for some $A \in \F_{i \Mod{K}}$ and $B \in \F_{i + 1 \Mod{K}}$; this implies that $x^{i+1} \le \chi_{A \cap B}$. But by definition of affine design, we have $|A \cap B| \le 1$, and hence $\ones^T x^{i+1} \le 1$, reaching a contradiction.\myqed
	\end{proof}

	The following lemma can be easily checked. 
	
	\begin{lemma} \label{lemma:sizeNonAdj}
		Let $S$ be a subset of $\{0, \ldots, K-1\}$ that does not contain any three adjacent mod $K$ integers. Then: (i) if $K = 3k$ or $K = 3k + 1$ for $k \in \Z_{++}$ we have $|S| \le 2k$, and; (ii) if $3k + 2$ for $k \in \Z_{++}$ we have $|S| \le 2k + 1$. 
	\end{lemma}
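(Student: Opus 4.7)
The plan is to prove the lemma via a simple double-counting argument on cyclic windows of length three.

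First I would set $T = \{0, \ldots, K-1\} \setminus S$ and reformulate the hypothesis on $S$ as the statement that every cyclic window of three consecutive integers $W_i := \{i, i+1, i+2\} \pmod{K}$, for $i = 0, 1, \ldots, K-1$, contains at least one element of $T$ (otherwise such a window would witness three adjacent mod $K$ integers inside $S$). There are exactly $K$ such windows.

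Next I would double-count the pairs $(W_i, t)$ with $t \in T \cap W_i$. On one hand, since every window contains at least one element of $T$, the number of such pairs is at least $K$. On the other hand, each fixed element $t \in T$ belongs to exactly three windows, namely $W_{t-2}, W_{t-1}, W_t$ (indices mod $K$), so the number of pairs equals $3|T|$. Combining gives $3|T| \ge K$, hence $|T| \ge \lceil K/3 \rceil$, and therefore $|S| \le K - \lceil K/3 \rceil$.

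Finally I would unpack this bound in the three residue classes of $K \bmod 3$. For $K = 3k$ one has $\lceil K/3 \rceil = k$, giving $|S| \le 2k$; for $K = 3k+1$ one has $\lceil K/3 \rceil = k+1$, giving $|S| \le 2k$; for $K = 3k+2$ one has $\lceil K/3 \rceil = k+1$, giving $|S| \le 2k+1$. These are exactly the bounds in parts (i) and (ii). There is no real obstacle here — the only point to verify carefully is the cyclic counting (each element of $T$ lies in exactly three windows), which is immediate because the windows tile the cycle with multiplicity three.
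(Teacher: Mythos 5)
Your proof is correct. The paper itself only states that the lemma ``can be easily checked'' and offers no argument in the final text (an earlier draft, removed from the source, handled it by partitioning $\{0,\ldots,K-1\}$ into consecutive triples and doing a separate case analysis for each residue of $K$ modulo $3$, with extra sub-cases when $K \neq 3k$). Your double-counting over the $K$ cyclic windows $W_i = \{i, i+1, i+2\} \pmod{K}$ is a genuinely cleaner route: the hypothesis on $S$ translates exactly into every window meeting the complement $T$, each element of $T$ lies in exactly three windows (which requires only $K \ge 3$, guaranteed since $k \in \mathbb{Z}_{++}$), and the single inequality $3|T| \ge K$ yields $|S| \le K - \lceil K/3 \rceil$ uniformly, after which the three residue classes are just arithmetic. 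This avoids the ad hoc case splitting entirely and makes the tightness of the bound transparent. No gaps.
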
	

\remove{
	\begin{proof}
		Suppose $K = 3k$. Then we have $|\{0, 1, 2\} \bigcap S| \leq 2$, $|\{3, 4, 5\} \bigcap S| \leq 2$, $\dots, |\{K - 3, K - 2, K - 1\} \bigcap S| \leq 2$. Therefore the result holds.

		Now suppose $K \neq 3k$. If $K = 4$ the result can be easily checked. For $K \geq 5$ one of the two cases can occur:

\begin{enumerate}
\item  $\{i, i + 1\} \not\subseteq T^K(\hat{x})$ for all $i$: In this case, $|T^K(\hat{x})| \leq \left \lfloor \frac{K}{2}\right \rfloor$, which is smaller than the bound. 
\item There exists $i \in \{1, \dots, K\}$ such that $\{i, i + 1\} \subseteq T^K(\hat{x})$: WLOG, we may assume that $i = 1$. By Claim 1 we have that $3 \notin T^K(\hat{x})$ and $K \notin T^K(\hat{x})$. Then 
\begin{enumerate}
\item If $K = 3k + 1$, then since $|\{4, 5, 6\} \bigcap T^K(\hat{x})| \leq 2$, $|\{7, 8, 9\} \bigcap T^K(\hat{x})| \leq 2$, $\dots, |\{3k - 2, 3k - 1, 3k \} \bigcap T^K(\hat{x})| \leq 2$, we obtain that $|T^K(\hat{x})| \leq 2 + 2(k -1) = 2k$.
\item If $K = 3k + 2$, then since $|\{4, 5, 6\} \bigcap T^K(\hat{x})| \leq 2$, $|\{7, 8, 9\} \bigcap T^K(\hat{x})| \leq 2$, $\dots, |\{3k - 2, 3k - 1, 3k \} \bigcap T^K(\hat{x})| \leq 2$, we obtain that $|T^K(\hat{x})| \leq 2 + 2(k -1) + 1 = 2k  + 1$.~
\end{enumerate}
\end{enumerate}\myqed
	\end{proof}
}

	\begin{lemma} \label{lemma:LBzICycle}
		The optimal value of the integer program $(\textup{P})$ can be upper bounded as follows: if $K = 3k$ or $K = 3k + 1$ for $k \in \Z_{++}$, $z^I \le (n - 1) \cdot 2k + K$; if $K = 3k + 2$ for $k \in \Z_{++}$, $z^I \le (n - 1) \cdot (2k + 1) + K$.
	\end{lemma}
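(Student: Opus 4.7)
The plan is to bound the objective of any integer feasible solution by splitting the blocks of variables into those with ``high'' total value and those without, and then apply the two preceding lemmas to control the size of the high set.

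Concretely, I would start with an arbitrary $x = (x^0, \ldots, x^{K-1}) \in P$ and write the objective value as
\begin{align*}
\sum_{i=0}^{K-1} \ones^T x^i = \sum_{i \in \high(x)} \ones^T x^i + \sum_{i \notin \high(x)} \ones^T x^i .
\end{align*}
For indices $i \in \high(x)$, the first constraint in \eqref{eq:designDown2} gives the bound $\ones^T x^i \le n$. For indices $i \notin \high(x)$, by the very definition of $\high(x)$ we have $\ones^T x^i \le 1$ (since $x^i$ is integer and $0/1$-valued). Therefore
\begin{align*}
\sum_{i=0}^{K-1} \ones^T x^i \le |\high(x)| \cdot n + (K - |\high(x)|) \cdot 1 = (n-1)\cdot |\high(x)| + K .
\end{align*}

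The next step is to control $|\high(x)|$. Lemma~\ref{lemma:adjacent} guarantees that $\high(x)$ contains no three adjacent mod $K$ integers, so Lemma~\ref{lemma:sizeNonAdj} applies: we get $|\high(x)| \le 2k$ in the cases $K = 3k$ and $K = 3k+1$, and $|\high(x)| \le 2k+1$ in the case $K = 3k+2$. Plugging these into the previous display and taking the maximum over feasible $x$ yields the claimed bounds
\begin{align*}
z^I \le (n-1)\cdot 2k + K \quad \text{or} \quad z^I \le (n-1)\cdot(2k+1) + K ,
\end{align*}
depending on $K \bmod 3$.

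There is no serious obstacle: the structural work has already been done by the two supporting lemmas, and the proof reduces to a routine counting argument. The only mild subtlety is to remember that $\high(x)$ being defined by $\ones^T x^i \ge 2$ means its complement satisfies $\ones^T x^i \le 1$ (which requires integrality of $x^i$, guaranteed since $x \in P$).
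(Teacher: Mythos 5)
Your proof is correct and follows essentially the same route as the paper's: decompose the objective over $\high(x)$ and its complement, bound each part by $n$ and $1$ respectively, and then invoke Lemmas~\ref{lemma:adjacent} and~\ref{lemma:sizeNonAdj} to bound $|\high(x)|$. The only (harmless) cosmetic difference is that you work with an arbitrary feasible point and take a maximum, while the paper fixes an optimal solution at the outset.
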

	
	\begin{proof}
		Let $\bar{x} = (\bar{x}^0, \ldots, \bar{x}^{K-1})$ be an optimal solution to $(\textup{P})$. Using the fact that $\ones^T \bar{x}^i \le n$ and the definition of $\high(\bar{x})$ we get
	\begin{align*}
	z^I = \sum_{i= 0}^{K-1} \ones^T \bar{x}^i &= \sum_{i \in \high(\bar{x})} \ones^T \bar{x}^i + \sum_{i \notin \high(\bar{x})} \ones^T \bar{x}^i \\
	&\le n \cdot |\high(\bar{x})| + K - |\high(\bar{x})| = (n-1) \cdot |\high(\bar{x})| + K.
	\end{align*}	
	Upper bounding $|\high(\bar{x})|$ using Lemmas \ref{lemma:adjacent} and \ref{lemma:sizeNonAdj} gives the desired result.\myqed
	\end{proof}

	\begin{lemma} \label{lemma:LBNS2}
		The point $\bar{x} = (\frac{1}{n} \ones, \ldots, \frac{1}{n} \ones)$ is a feasible solution to the natural sparse closure $P^{N.S}$. Thus, $z^{N.S.} \ge K n$.
	\end{lemma}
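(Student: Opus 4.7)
My plan is to prove $\bar{x} \in P^{N.S.}$ by invoking Observations \ref{prop:fund} and \ref{obs:projPack}: since the natural sparse closure on the cycle is $P^{N.S.} = \bigcap_{i=0}^{K-1} P^{(\{v_i, v_{i+1}\})}$, it suffices to check (a) that $\bar{x} \in P^{LP}$, and (b) that for every edge $\{v_i, v_{i+1}\}$ of the cycle, the restriction $(\bar{x}^i, \bar{x}^{i+1})$ lies in the projection $\proj_{J_i \cup J_{i+1}}(P^I)$. The bound $z^{N.S.} \ge Kn$ then follows by plugging $\bar{x}$ in the objective, since $\sum_{i=0}^{K-1} \ones^T \bar{x}^i = K \cdot n$.

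LP feasibility is a direct calculation from the explicit description \eqref{eq:designDown2}: each block sum equals $\ones^T \bar{x}^j = n^2 \cdot \frac{1}{n} = n$, and each 3-term inequality evaluates to $\frac{3}{n} \le 2$ (using $n \ge K \ge 3$), and each coordinate is $\frac{1}{n} \le 1$.

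For the projection condition, fix an edge $\{v_i, v_{i+1}\}$. For every ordered pair $(A, B) \in \F_i \times \F_i$, define the integral point $z^{(A,B)}$ by setting its $i$-th block to $\chi_A$, its $(i+1)$-th block to $\chi_B$, and all other blocks to $\mathbf{0}$. I will verify $z^{(A,B)} \in P^I$ block by block:
\begin{itemize}
\item For $P_i$: since $x^i = \chi_A$ with $A \in \F_i$, and $x^{i+1} = \chi_B$ with $B \in \F_i$, both implications in the definition of $P_i$ hold (taking the dominating set to be $A$ and $B$ respectively).
\item For $P_{i-1}$ and $P_{i+1}$: the adjacent blocks $x^{i-1}$ and $x^{i+2}$ are zero, so the ``if $\neq 0$'' implications involving them are vacuous, and the reverse implications merely require the zero vector to be dominated by some $\chi_{A'}$, which is trivial.
\item For all other $P_j$: both $x^j$ and $x^{j+1}$ are zero, so the implications are vacuous.
\end{itemize}
The constraint $\ones^T x^j \le n$ holds since $|A|=|B|=n$ and the other blocks are zero. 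Then I average the $z^{(A,B)}$'s over $(A,B) \in \F_i \times \F_i$: because $\F_i$ partitions $[n^2]$ into $n$ sets of size $n$, we have $\sum_{A \in \F_i} \chi_A = \ones$, hence the $i$-th block of the average equals $\frac{1}{n^2} \cdot n \cdot \ones = \frac{1}{n}\ones$, and similarly for the $(i+1)$-th block; all other blocks vanish. This average is a convex combination of points in $P^I$, so it lies in $\conv(P^I)$, and its projection onto $J_i \cup J_{i+1}$ is precisely $(\bar{x}^i, \bar{x}^{i+1})$. Thus $(\bar{x}^i, \bar{x}^{i+1}) \in \proj_{J_i \cup J_{i+1}}(P^I)$, as required.

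The only non-routine part is the block-by-block feasibility check for $z^{(A,B)}$, which just requires careful handling of cyclic indexing (even when $K = 3$ and $i-1 \equiv i+2 \Mod{K}$, the corresponding block has both of its variables equal to zero, so the argument is unaffected). Everything else is a direct computation, and the construction is essentially the same averaging trick already used in the proof of Lemma~\ref{lemma:designNS}.
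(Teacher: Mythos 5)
Your proposal is correct and follows essentially the same route as the paper: both construct the integral points with blocks $i$ and $i+1$ set to $\chi_A$ and $\chi_B$ for $A,B \in \F_i$ and all other blocks zero, verify they lie in every $P_j$, and average over $\F_i \times \F_i$ using $\sum_{A \in \F_i} \chi_A = \ones$ to land on the restriction of $\bar{x}$ to each edge's variables. Your version merely spells out the block-by-block feasibility check and the cyclic-indexing edge case that the paper leaves as ``follows directly from the definition of $P_j$.''
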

	
	\begin{proof}	
		To simplify the notation, let $\zero^i(x,x') \in \R^{n^2} \times \ldots \R^{n^2}$ denote the vector $$(0, \ldots, 0, x, x', 0, \ldots, 0)$$ where $x$ is in the $i$th position and $x'$ is in position $i + 1 \Mod{K}$.
		 
		We claim that it suffices to prove that $\zero^i(\ones/n, \ones/n)$ belongs to $P^I$ for all $i$. To see that, first notice that the natural sparse closure w.r.t. $\J$ is 	$P^{N.S.} = \bigcap_{i \in 0}^{K-1} P^{(x^i,x^{i+1})}$, where we use $P^{(x^i,x^{i+1})}$ to denote the sparse closure of $P$ on variables $(x^i,x^{i+1})$.
Using Observations \ref{prop:fund} and \ref{obs:projPack}, it suffices to show $\bar{x} \in P^{LP}$ and $\zero^i(\ones/n, \ones/n) \in P^I$. The former condition can be easily verified via equation \eqref{eq:designDown}, so it suffices to show $\zero^i(\ones/n,\ones/n) \in P^I$.
				
		So fix $i$. Consider the collection $\F_i$. By the definition of $P_i$, for each $A,B \in \F_i$ the point $\zero^i(\chi_A, \chi_B)$ belongs to $P_i \cap \Z^{K n^2}$. If also follows directly from the definition of $P_j$ that $\zero^i(\chi_A, \chi_B) \in P_j$ for all $j \neq i$. Thus, we have $\zero^i(\chi_A, \chi_B) \in P^I = \bigcap_{j = 0}^{K-1} P_j \cap \Z^{n^2 + \Delta}$. Then the following average belongs to $P^I$:
		\begin{align*}
			\sum_{A \in \F_i} \frac{1}{n} \sum_{B \in \F_i} \frac{1}{n} \zero^i(\chi_A, \chi_B) =	\sum_{A \in \F_i} \frac{1}{n} \zero^i\left(\chi_A, \sum_{B \in \F_i} \frac{1}{n} \chi_B\right) = \zero^i\left(\sum_{A \in \F_i} \frac{1}{n}, \sum_{B \in \F_i} \frac{1}{n}  \right).
		\end{align*}	
		Recalling that $\sum_{A \in \F_i} \chi_A = \ones$, this average is $\zero^i\left(\frac{1}{n} \ones, \frac{1}{n} \ones\right) \in P^I$. This concludes the proof.\myqed
	\end{proof}

	Putting Lemmas \ref{lemma:LBzICycle} and \ref{lemma:LBNS2} together, we get that if $K = 3k$ for $k \in \Z_{++}$, $\frac{z^{N.S.}}{z^I} \ge \frac{Kn}{(n-1) \cdot 2k + K} = \frac{n}{(n-1) \cdot (2/3) + 1} = \frac{1}{2/3 + 1/3n}$. Since $\lim_{n \rightarrow \infty} \frac{1}{2/3 + 1/3n} = \frac{3}{2}$, for sufficiently large $n$ we have $z^{N.S.} \ge z^I (\frac{3}{2} - \epsilon)$, proving this part of the theorem. The other cases of $K \Mod{3}$ are similar. This concludes the proof. 
 
\subsection{Proof for covering problem}\label{sec:proofcovering}

\subsubsection{Proof of Theorem~\ref{thm:coversupersparse}}
In order to prove Theorem \ref{thm:coversupersparse}, we begin with a classical bad example for the LP relaxation of the set cover problem.

\begin{definition}[Special set covering problem (SSC)]
	Consider $q \in \mathbb{Z}_+$. The ground set of the set cover problem will be $\{0,1\}^q$, and the covering sets $S(v) = \{ u \in \{0,1\}^q \setminus \{\zeros\} \mid v^T u = 1 \Mod{2}\}$ for $v \in \{0,1\}^q$. Then the Special Set Covering (SSC(q)) problem is defined by:
	\begin{align*}
		(SSC(q))~~\min &\sum_{v \in \{0,1\}^q} x_v  \\
		s.t.& \sum_{v : u \in S(v)} x_v \geq 1 ~~~~~\forall u \in \{0,1\}^q\\
		& x_v \in \{0,1\}  ~~~~~~~~~~\forall v \in \{0,1\}^q.
	\end{align*}
We refer to the left-hand matrix of SSC as $A^q$.
\end{definition}


\begin{theorem}[\cite{lovasz:1975}]\label{thm:knownresult}
The IP optimal value of $SSC(q)$ is at least $q$, while the LP relaxation optimal value is at most $2$. 
\end{theorem}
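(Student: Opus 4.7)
The plan is to prove the LP upper bound by exhibiting a uniform fractional cover, and the IP lower bound via a short linear-algebra-over-$\mathbb{F}_2$ argument, treating them independently. For the LP bound of $2$, I would take the uniform solution $x_v = 1/2^{q-1}$ for all $v \neq \zero$ and $x_{\zero} = 0$. To verify feasibility of the covering constraint indexed by a fixed nonzero $u \in \{0,1\}^q$, I would count the number of $v \in \{0,1\}^q$ with $v^T u \equiv 1 \pmod 2$: fixing any coordinate $i^*$ with $u_{i^*} = 1$ and iterating over all $2^{q-1}$ assignments of the remaining coordinates of $v$, exactly one value of $v_{i^*}$ makes the inner product odd. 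Hence exactly $2^{q-1}$ variables appear in the constraint, its left-hand side evaluates to $1$, and the objective value is $(2^q - 1)/2^{q-1} < 2$, as required.

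For the IP lower bound, I would identify an integer feasible solution with the set $T \subseteq \{0,1\}^q$ of its chosen vectors and show that $T$ must $\mathbb{F}_2$-span $\mathbb{F}_2^q$, which forces $|T| \geq q$. The key observation is that $u \in S(v)$ is exactly the condition $v^T u \equiv 1 \pmod 2$, so $u$ is covered by $T$ iff some $v \in T$ is not $\mathbb{F}_2$-orthogonal to $u$; equivalently, $u$ fails to be covered iff $u$ lies in $\mathrm{span}_{\mathbb{F}_2}(T)^{\perp}$. If $T$ does not span $\mathbb{F}_2^q$, this orthogonal complement is nontrivial and supplies some nonzero $u$ that is uncovered, contradicting feasibility. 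Since spanning $\mathbb{F}_2^q$ requires at least $q$ vectors, we conclude that the IP optimal value is at least $q$.

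The main conceptual step in both directions is recognizing that $S(v)$ encodes a linear functional on $\mathbb{F}_2^q$. Once this perspective is in place, the IP bound collapses to the elementary fact that spanning sets have size at least the dimension, and the LP bound collapses to the counting identity that a nonzero $\mathbb{F}_2$-linear functional takes each of the values $0$ and $1$ on exactly half of $\{0,1\}^q$. I do not anticipate any genuine obstacle beyond this observation.
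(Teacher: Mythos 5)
Your proof is correct, and there is nothing in the paper to compare it against: the paper states this result as a citation to Lov\'asz (1975) and gives no proof of it. Both halves of your argument are sound. For the LP bound, the uniform point $x_v = 1/2^{q-1}$ on the nonzero $v$'s is feasible because for each nonzero $u$ the map $v \mapsto v^Tu \bmod 2$ is a nonzero linear functional, hence exactly $2^{q-1}$ of the $v$'s satisfy $v^Tu \equiv 1 \pmod 2$ (and none of them is $\zero$, since $\zero^Tu = 0$), giving left-hand side exactly $1$ and objective $(2^q-1)/2^{q-1} < 2$. For the IP bound, the identification of ``$u$ uncovered by $T$'' with ``$u \in \mathrm{span}_{\mathbb{F}_2}(T)^{\perp}$'' is exactly right, and since the standard bilinear form on $\mathbb{F}_2^q$ is nondegenerate, $\mathrm{span}(T)^{\perp} = \{0\}$ forces $\mathrm{span}(T) = \mathbb{F}_2^q$ and hence $|T| \ge q$. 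One small point worth flagging, which you handled implicitly but correctly: as literally written in the paper the covering constraint ranges over all $u \in \{0,1\}^q$, and the constraint for $u = \zero$ is vacuously infeasible because $\zero \notin S(v)$ for every $v$; the intended ground set is $\{0,1\}^q \setminus \{\zero\}$, which is the convention your proof uses.
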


	We are now ready to present the proof of Theorem~\ref{thm:coversupersparse}. For that, we consider the following problem:
	\begin{align*}
		(DSC(q))~~\min& \sum_{v \in \{0,1\}^q} x_v +  \sum_{v \in \{0,1\}^q} y_v\\
		s.t. ~~&A^q x + A^q y \geq \ones\\
		& x,y \in \{0,1\}^{2^q}
	\end{align*}

We first argue that $DSC(q)$ preserves the gap between IP and LP from $SSC(q)$.

\begin{lemma}
	The IP optimal value of $DSC(q)$ is at least $q$, while the LP relaxation optimal value is at most $2$. 	
\end{lemma}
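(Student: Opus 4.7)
The plan is to show that $DSC(q)$ inherits both bounds essentially for free from $SSC(q)$, since $DSC(q)$ is, structurally, just $SSC(q)$ with each variable duplicated into an $x$-copy and a $y$-copy tied together by the summed constraint $A^q x + A^q y \ge \ones$.

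For the LP upper bound, I would note that any LP-feasible solution $x^* \in \mathbb{R}_+^{2^q}$ for $SSC(q)$ yields the LP-feasible solution $(x^*, \mathbf{0})$ for $DSC(q)$ with the same objective value. Since Theorem \ref{thm:knownresult} gives an LP-feasible $x^*$ for $SSC(q)$ of value at most $2$ (e.g., the uniform assignment $x^*_v = 2/2^q$), the LP optimum of $DSC(q)$ is at most $2$.

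For the IP lower bound, the key reduction is to collapse a binary pair back to a single binary vector. Given any integer-feasible $(x,y) \in \{0,1\}^{2^q} \times \{0,1\}^{2^q}$ for $DSC(q)$, define $z \in \{0,1\}^{2^q}$ by $z_v := \max(x_v, y_v)$. I would then verify that $z$ is feasible for $SSC(q)$: for each constraint indexed by $u \in \{0,1\}^q \setminus \{0\}$, the $DSC(q)$ feasibility gives
\begin{equation*}
\sum_{v \,:\, u \in S(v)} (x_v + y_v) \ge 1.
\end{equation*}
Since each summand is a non-negative integer and the sum is at least $1$, at least one term satisfies $x_v + y_v \ge 1$, and for that $v$ we have $z_v = \max(x_v, y_v) = 1$, giving $\sum_{v : u \in S(v)} z_v \ge 1$. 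Hence $z$ is $SSC(q)$-feasible. Moreover, by the elementary bound $\max(x_v, y_v) \le x_v + y_v$ on binary entries, $\ones^T z \le \ones^T x + \ones^T y$. Applying Theorem \ref{thm:knownresult} to $z$ gives $\ones^T x + \ones^T y \ge \ones^T z \ge q$, so the IP optimum of $DSC(q)$ is at least $q$.

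There is no real obstacle here; the lemma is a direct transfer of the gap from $SSC(q)$ to $DSC(q)$ via the trivial lifting $x^* \mapsto (x^*,\mathbf{0})$ on the LP side and the collapsing map $(x,y) \mapsto x \lor y$ on the IP side. The slightly delicate point worth writing out carefully is the integrality argument showing that $\sum_v (x_v+y_v) \ge 1$ forces $z_v = 1$ for some $v$ in the support, which is what keeps the reduction clean when moving from the summed constraint in $DSC(q)$ back to a single-variable covering constraint in $SSC(q)$.
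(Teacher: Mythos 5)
Your proposal is correct and follows essentially the same route as the paper: the LP bound via the lifting $x^* \mapsto (x^*, \mathbf{0})$, and the IP bound by collapsing an integer solution $(x,y)$ of $DSC(q)$ to an integer solution of $SSC(q)$ of no larger value. The paper phrases the collapse as a contradiction argument with local swaps (moving mass from $y$ to $x$ until $y=\mathbf{0}$), but the resulting solution is exactly your $z = x \vee y$, so the two arguments coincide.
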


\begin{proof}
	$(z^{LP} \le 2)$: By Theorem~\ref{thm:knownresult}, there exists a feasible solution $\bar{x}$ of the LP relaxation of $SSC(q)$ such that $\sum_v \bar{x}_v \leq 2$. Then $(\bar{x}, 0)$ is a feasible solution of the LP relaxation of $DSC(q)$, giving the desired bound.
	
	\smallskip \noindent $(z^{IP} \ge q)$: Assume by contradiction that $(\bar{x}, \bar{y})$ is a feasible solution of $DSC(q)$ with objective function less than $q$. Note that if $\bar{x}_v = \bar{y}_v = 1$, then we may set $\bar{y}_v = 0$ and still obtain a feasible solution with a better objective function value; similarly, if $\bar{x}_v = 0$ and $\bar{y}_v = 1$ we may set $\bar{x}_v = 1$ and $\bar{y}_v = 0$ and obtain a feasible solution with same objective value. Therefore, we may assume that $\bar{y} = 0$. In this case, $\bar{x}$ is a feasible solution of SSC with objective function less than $q$, contradicting the statement of Theorem \ref{thm:knownresult}.\myqed
\end{proof}

%

We consider a partition on the columns of $DSC(q)$ into two blocks: $\mathcal{J} = \{J_1, J_1\}$ where $J_1$ corresponding to variables $x$ and $J_2$ corresponding to variables $y$. To complete the proof of the theorem it is sufficient to prove that the super sparse closure optimal value $z^{S.S.}$ of $DSC(q)$ is equal to optimal LP value $z^{LP}$ of $DSC(q)$. 

\begin{lemma}
	$z^{S.S} = z^{LP}$ for $DSC(q)$.
\end{lemma}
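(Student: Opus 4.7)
The plan is to prove the set equality $P^{LP} = P^{S.S.}$, from which $z^{S.S.} = z^{LP}$ follows immediately. The containment $P^{S.S.} \subseteq P^{LP}$ is trivial from the definition (the closure only adds inequalities to $P^{LP}$), so all the work is in the reverse direction $P^{LP} \subseteq P^{S.S.}$. Since $P^{S.S.} = P^{(J_1)} \cap P^{(J_2)}$ by definition of the super-sparse closure with this $\mathcal{J}$, Observation~\ref{prop:fund} reduces the task to showing that every $(x,y) \in P^{LP}$ satisfies $x \in P^I|_{J_1}$ and $y \in P^I|_{J_2}$, where $P^I|_{J_i}$ denotes the projection of the integer hull of $DSC(q)$ onto the coordinates in $J_i$.

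The key observation I would use is that the all-ones vector $\ones$ is a ``universal cover'' for $DSC(q)$: one has $A^q \ones \ge \ones$, because for every nonzero $u \in \{0,1\}^q$ there is an index $i$ with $u_i = 1$, and then the entry $A^q_{u, e_i}$ equals $1$ (since $e_i^T u = 1 \Mod 2$, so $e_i \in \{v : u \in S(v)\}$). Consequently, for any $\bar{x} \in \{0,1\}^{2^q}$ the pair $(\bar{x}, \ones)$ is integer-feasible for $DSC(q)$, so the projection of the integer feasible region onto $J_1$ is all of $\{0,1\}^{2^q}$; taking convex hulls gives $P^I|_{J_1} = [0,1]^{2^q}$. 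By the perfect symmetry between $x$ and $y$ in the problem, the same reasoning yields $P^I|_{J_2} = [0,1]^{2^q}$.

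To conclude, I would observe that the LP relaxation of $DSC(q)$ already enforces $x,y \in [0,1]^{2^q}$, so any $(x,y) \in P^{LP}$ automatically satisfies $x \in P^I|_{J_1}$ and $y \in P^I|_{J_2}$; Observation~\ref{prop:fund} then concludes that $(x,y) \in P^{(J_1)} \cap P^{(J_2)} = P^{S.S.}$. The only substantive point in the argument is the universal-cover claim for $\ones$, and it is precisely the symmetric ``duplicated'' structure of $DSC(q)$ that lets each variable block independently satisfy the whole system — so super-sparse cuts, which inspect $x$ and $y$ in isolation, are blind to the genuine coupling that any joint cut would need to exploit. This coupling, in turn, is exactly what drives the integrality gap inherited from Theorem~\ref{thm:knownresult}, yielding the desired arbitrarily large gap $z^I/z^{S.S.} \ge q/2$.
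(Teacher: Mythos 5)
Your proposal is correct and follows essentially the same route as the paper: both reduce the claim via Observation~\ref{prop:fund} to showing $P^I|_{J_1} = P^I|_{J_2} = [0,1]^{2^q}$, which the paper attributes to the covering structure together with feasibility of $SSC(q)$, and which you verify explicitly by extending any $\bar{x} \in \{0,1\}^{2^q}$ with the all-ones vector in the other block. The extra detail (checking $A^q \ones \ge \ones$ via the columns indexed by unit vectors $e_i$) is a welcome, correct elaboration of the step the paper leaves implicit.
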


\begin{proof}
	Let $P$ be the integer set for $DSC(q)$. 
	Since $P^{S.S} = P^{(J_1)} \cap P^{(J_2)}$, Observation \ref{prop:fund} gives that $(\bar{x}, \bar{y}) \in P^{S.S}$ iff $(\bar{x}, \bar{y}) \in P^{LP}$ and $\bar{x} \in P^I|_{J_1}$ and $\bar{y}  \in P^I|_{J_2}$. But since $P$ is of covering-type and $SSC(q)$ is feasible, we have that $P^I|_{J_i} = [0,1]^{2^q}$, and thus $(\bar{x}, \bar{y}) \in P^{S.S.}$ iff $(\bar{x}, \bar{y}) \in P^{LP}$. This concludes the proof.\myqed
\end{proof}


\subsubsection{Proof of Theorem~\ref{thrm:covering}}

	Consider a covering problem $(\textup{C})$.	As in the packing case, there is an identification of sets of nodes of $\Gc$ with sets of indices of variables (the ``indices in the union of their support''), namely if $\I = \{I_1, I_2, \ldots, I_q\}$ is the given row index partition and the nodes of $\Gc$ are $\{v_1, v_2, \ldots, v_q\}$, then the set of vertices $\{v_i\}_{i \in I}$ corresponds to the indices $\bigcup_{i \in I} \bigcup_{r \in I_i} \supp(A_r) \subseteq [n]$. We will make use of this correspondence, and in order to make statements precise we use the function $\usupp : 2^{V(\Gc)} \rightarrow 2^{[n]}$ to denote this correspondence; with slight abuse of notation, for a singleton set $\{v\}$ we use $\usupp(v)$ instead of $\usupp(\{v\})$.
	
	Given a set of vertices $S \subseteq V(\Gc)$, let $x^{(S)}$ be the optimal solution of the covering problem projected to the variables relative to $S$, namely $x^{(S)} \in \argmin\{ (c|_{\usupp(S)})^T y ~\mid~ y \in P^I|_{\usupp(S)}\}$. Also, let $\zero^S(x^{(S)}) \in \R^n$ 
denote the solution appended by zeros in the original space, namely $\zero^S(x^{(S)})_i = x^{(S)}_i$ if $i \in \usupp(S)$ and $\zero^S(x^{(S)})_i = 0$ if $i \notin \usupp(S)$.
	
	Notice the following important property of $\zero^S(x^{(S)})$ (denote $S = \{v_i\}_{i \in I}$): for any row $r \in \bigcup_{i \in I} I_i$, since the support of $A_r$ is contained in $\usupp(S)$, the constraint $A_r x \ge b_r$ is valid for $P^I|_{\usupp(S)}$; therefore $A_r \zero^S(x^{(S)}) = A_r|_{\usupp(S)} x^{(S)} \ge b_r$. This gives the following.

	\begin{observation} \label{obs:feasCov}
		For any subset $S = \{v_i\}_{i \in I}$ of nodes of $\Gc$ and any row $r \in \bigcup_{i \in I} I_i$, $$A_r \zero^S(x^{(S)}) \ge b_r.$$
	\end{observation}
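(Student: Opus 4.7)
The plan is to unwind the definitions of $\usupp$, $\zero^S$, and $P^I|_{\usupp(S)}$ and observe that everything follows because the support of $A_r$ already lives inside $\usupp(S)$, so appending zeros outside $\usupp(S)$ does not affect the value $A_r \zero^S(x^{(S)})$.

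First, I would fix $r \in \bigcup_{i \in I} I_i$ and verify the key containment $\supp(A_r) \subseteq \usupp(S)$. This is essentially immediate from the definitions: by assumption $r$ belongs to some $I_i$ with $v_i \in S$, and then $\supp(A_r) \subseteq \bigcup_{r' \in I_i} \supp(A_{r'}) \subseteq \usupp(S)$. Consequently, the coordinates of $\zero^S(x^{(S)})$ that are forced to $0$ (those outside $\usupp(S)$) are multiplied by zero entries of $A_r$, so
\begin{equation*}
A_r \, \zero^S(x^{(S)}) \;=\; A_r|_{\usupp(S)} \cdot x^{(S)}.
\end{equation*}

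Second, I would argue that the inequality $A_r|_{\usupp(S)} y \ge b_r$ is valid for every $y \in P^I|_{\usupp(S)}$. Indeed, any such $y$ is the projection to the coordinates $\usupp(S)$ of some point $z \in P^I$; since $A_r z \ge b_r$ is a constraint of $(\textup{C})$, and since $A_r$ has zero entries outside $\usupp(S)$, we get $A_r|_{\usupp(S)} y = A_r z \ge b_r$. Applying this to $y = x^{(S)} \in P^I|_{\usupp(S)}$ yields $A_r|_{\usupp(S)} x^{(S)} \ge b_r$, and combining with the identity from the previous step gives $A_r \zero^S(x^{(S)}) \ge b_r$, as desired.

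There is no real obstacle here; the whole statement is a definitional check. The only point that deserves care is making sure the row $r$ lies in one of the blocks $I_i$ with $v_i \in S$ (so that its support is genuinely within $\usupp(S)$) — this is exactly the hypothesis $r \in \bigcup_{i \in I} I_i$ and is what forces the zero coordinates of $\zero^S(x^{(S)})$ to align with zeros of $A_r$.
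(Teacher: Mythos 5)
Your proof is correct and follows essentially the same route as the paper, which justifies the observation in one line by noting that $\supp(A_r) \subseteq \usupp(S)$ makes $A_r x \ge b_r$ valid for $P^I|_{\usupp(S)}$ and hence $A_r \zero^S(x^{(S)}) = A_r|_{\usupp(S)}\, x^{(S)} \ge b_r$. Your write-up merely spells out the two definitional steps (the support containment and the validity of the restricted inequality on the projection) in more detail.
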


	We start by showing that the solutions $x^{(M)}$, for $M$ in a mixed stable set $\M$, can be used to provide a lower bound on the optimal value of $P^{\V,C}$.

\begin{lemma}\label{lem:addstabletsetcover}
Let $\mathcal{M}$ be a mixed stable set for $\Gc$ subordinate to $\V$. Then $$z^{\V,C} \ge \sum_{M \in \mathcal{M}} (c|_{\usupp(M)})^T x^{(M)}.$$ 
\end{lemma}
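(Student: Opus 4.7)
The plan is to establish the inequality by taking an arbitrary feasible $x \in P^{\V,C}$, projecting onto each $\usupp(M)$ to invoke optimality of $x^{(M)}$, and then summing these bounds while exploiting the disjointness of the projections.

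First I would show that for every $M \in \M$ and every $x \in P^{\V,C}$, the restriction $x|_{\usupp(M)}$ is a feasible solution to the covering problem restricted to variables in $\usupp(M)$. Since $\M$ is subordinate to $\V$, there exists $V_M \in \V$ with $M \subseteq V_M$, so $\usupp(M) \subseteq \usupp(V_M)$. By monotonicity of closures (more variables allows for more valid inequalities), $x \in P^{(V_M)} \subseteq P^{(M)}$. Applying Observation~\ref{prop:fund} to the set $P^{(M)}$ then gives $x|_{\usupp(M)} \in P^I|_{\usupp(M)}$, and by definition of $x^{(M)}$ as the optimizer over this projection, we get $(c|_{\usupp(M)})^T\,x|_{\usupp(M)} \ge (c|_{\usupp(M)})^T x^{(M)}$.

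The key combinatorial step is then to verify that the index sets $\{\usupp(M)\}_{M \in \M}$ are pairwise disjoint subsets of $[n]$. This is where the ``no edges between distinct sets of $\M$'' condition of a mixed stable set enters: if some variable index $i$ lay in both $\usupp(M_1)$ and $\usupp(M_2)$ for distinct $M_1, M_2 \in \M$, then there would be nodes $u \in M_1$ and $w \in M_2$ together with rows $r_1 \in I_u$, $r_2 \in I_w$ such that column $i$ is nonzero in both $A_{r_1}$ and $A_{r_2}$; by definition of the covering interaction graph this creates an edge between $u$ and $w$, contradicting the stable set property. (Disjointness within a single $M$ is not required.)

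Having established disjointness, and using $c,x \ge 0$, I combine the per-$M$ bounds:
\begin{equation*}
    c^T x \;\ge\; \sum_{M \in \M} (c|_{\usupp(M)})^T \, x|_{\usupp(M)} \;\ge\; \sum_{M \in \M} (c|_{\usupp(M)})^T x^{(M)}.
\end{equation*}
Minimizing the left-hand side over $x \in P^{\V,C}$ yields the desired bound on $z^{\V,C}$. I expect the only subtle step to be confirming the disjointness of the $\usupp(M)$'s from the defining property of $\Gc$; the rest is a routine chain of projection arguments using Observation~\ref{prop:fund}, closure monotonicity, and non-negativity of $c$ and $x$.
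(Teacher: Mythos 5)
Your proposal is correct and follows essentially the same route as the paper's proof: project the optimizer of $P^{\V,C}$ onto each $\usupp(M)$, use Observation~\ref{prop:fund} together with subordination to $\V$ to land in $P^I|_{\usupp(M)}$, invoke optimality of $x^{(M)}$, and sum using disjointness of the $\usupp(M)$'s and non-negativity of $c$. The only difference is that you explicitly derive the disjointness of the $\usupp(M)$'s from the no-edges condition in the definition of $\Gc$, a step the paper asserts without proof; your derivation of it is correct.
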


\begin{proof}
Consider an optimal solution $x^* \in \argmin\{c^Tx \,|\, x \in P^{\mathcal{V}, C}\}$ of the row block-sparse closure. Since 
$P^{\mathcal{V}, C} = \bigcap_{S \in \mathcal{V}} P^{(S)}$, Observation \ref{prop:fund} implies that $x^*|_{\usupp(S)} \in P^I|_{\usupp(S)}$ for all $S \in \V$. Moreover, since for every set $M$ in the mixed stable set $\M$ there is $S \in \V$ containing $M$, this implies that $x^*|_{\usupp(M)} \in P^I|_{\usupp(M)}$ for all $M \in \M$. Then by the optimality of $x^{(M)}$, we get $(c|_{\usupp(M)})^T (x^*|_{\usupp(M)}) \ge (c|_{\usupp(M)})^T x^{(M)}$ for all $M \in \M$.

	Then we can decompose the optimal solution $x^*$ based on the variables $\usupp(M)$ and use the non-negativity of $c$:
	\begin{align*}
		z^{\mathcal{V}, C} &= c^T x^* = \sum_{M\in \mathcal{M}} (c|_{\usupp(M)})^T (x^*|_{\usupp(M)}) + \sum_{i \notin \bigcup_{M \in \M} \usupp(M)} c_i x^*_i \ge \sum_{M\in \mathcal{M}} (c|_{\usupp(M)})^T x^{(M)},	
	\end{align*}
	where the first equality uses the fact that if $M_1, M_2 \in \mathcal{M}$, then $\usupp(M_1)\cap \usupp(M_2) = \emptyset$. This concludes the proof. \myqed
\end{proof}


	Now show how to put solutions $x^{(M)}$ together to get a feasible solution for the covering problem, thus providing an upper bound on $z^I$. Recall the definition of mixed chromatic number $\bar{\eta} = \bar{\eta}^\V(\Gc)$ and consider covering mixed stable sets $\M_1, \ldots, \M_{\bar{\eta}}$ (i.e., $V(\Gc) = \bigcup_i \bigcup_{M \in \M_i} M$).
 
	Define $u \in \R^n$ as the pointwise maximum of the solutions $\{\zero^M(x^{(M)}))\}_{i, M \in \M_i}$. Since the matrix $A$ in the problem is non-negative, Observation \ref{obs:feasCov} implies that $u$ is a feasible solution for the covering problem $(\textup{C})$. Thus, using the non-negativity of $c$ and of the $\zero^M(x^{(M)})$'s:
	\begin{align*}
		z^I &\leq c^T u \le \sum_{i, M \in \M_i} c^T \zero^M(x^{(M)}) = \sum_{i, M \in \M_i} (c|_{\usupp(M)})^T x^{(M)} \le \sum_i z^{\V,C} = \bar{\eta} \cdot z^{\V,C},
	\end{align*}
where the first inequality follows from definition of $z^I$ and feasibility of $u$, the second inequality follows from non-negativity of $c$, 	and the last inequality follows from Lemma \ref{lem:addstabletsetcover}. This concludes the proof of Theorem \ref{thrm:covering}. 	
	
%

\subsubsection{Proof of Theorem~\ref{thm:stoccovertight}}


Now we prove Theorem~\ref{thm:stoccovertight} by constructing a covering instance. Since the construction is quite involved, we start with an example.


\paragraph{Example of the construction.} We exemplify the construction for $K = 2$ and with a worse gap, and then we generalize/strengthen it (the discussion here will be somewhat informal). In this case the covering IP is the following (notice the indices of the $x$ variables in the different constraints):
	\begin{align}
		\min & \sum_{i} x_i + \infty \cdot (y_1 + y_2)\\
		s.t.~& \left[\left(\begin{array}{c} 1\\1\\0\\0 \end{array}  \right) x_1 + \left(\begin{array}{c} 0\\0\\1\\1 \end{array}  \right) x_2  \right] + \left[\left(\begin{array}{c} 1\\0\\1\\0 \end{array}  \right) x_3 + \left(\begin{array}{c} 0\\1\\0\\1 \end{array}  \right) x_4  \right] + \ones \cdot y_1 \ge \ones \label{eq:covLB1}\\
		& \left[\left(\begin{array}{c} 1\\1\\0\\0 \end{array}  \right) x_1 + \left(\begin{array}{c} 0\\0\\1\\1 \end{array}  \right) x_3  \right] + \left[\left(\begin{array}{c} 1\\0\\1\\0 \end{array}  \right) x_2 + \left(\begin{array}{c} 0\\1\\0\\1 \end{array}  \right) x_4  \right] + \ones \cdot y_2 \ge \ones \label{eq:covLB2}\\
		&x \in \Z_+^4, ~~y \in \Z^2_+. 
	\end{align}
	We will use the partition of rows $\I = \{I_1, I_2\}$, where $I_1 = \{1, 2, 3, 4\}$ (so corresponds to the first sets of covering constraints) and $I_2 = \{5, 6, 7, 8\}$.
	
	The only (minimal) ways to satisfy the first set of constraints is to set either $x_1=x_2=1$ (and all else to 0), or $x_3=x_4=1$ (and all else to 0), or $y_1=1$ (and all else to 0); because of the cost of the $y$ variables, actually we will always have $y=0$ in an optimal solution. To satisfy the second set of constraints the situation is similar, but the indices on the $x$ variables are permuted so that we need $x_1=x_3=1$ or $x_2=x_4=1$. So the best way to satisfy both of the constraints \textbf{simultaneously} is to set almost all $x$ variables to 1 (actually we can just set $x_1=x_2=x_3=1$). This gives cost of 3 for the IP.

	Now consider optimizing over the weak specific-scenario cuts closure $P^{{\V,C}}$ (where the row support list is $\V = \{ \{v_1\}, \{v_2\}\}$), i.e., the closure corresponding to the cuts on $(x,y_1)$ variables and on $(x, y_2)$ variables. Since the $y_i$ variable can be used to satisfy the $i$th set of covering constraints, it is easy to see that the only undominated $(x,y_1)$-cuts are the ones implied only the first set of covering constraints (\ref{eq:covLB1}), and similarly the only undominated $(x,y_2)$-cuts are the ones implied only by the second set of covering constraints (\ref{eq:covLB2}). Thus, the point  $x_1=x_2=x_3=x_4=\frac{1}{2}$, $y_1=y_2=0$ belongs to $P^{{\V,C}}$, giving $z^{*}= z^{\V, C} \le 2$.
	
	Together, these observations give that $\frac{z^{I}}{z^{\V, C}} \ge 3/2$.


\paragraph{General construction.}

	We start with the special set system that is used to define the columns of the covering program.
	
\begin{lemma}\label{lemma:planesPart}
Let $n \in \mathbb{Z}_{++}$. There is a collection $\G^1, \G^2, \ldots, \G^n$ with the following properties:

\begin{enumerate}
\item For each $i \in [n]$, $\G^i$ is a partition of $[n^n]$ and each set $G \in \G^i$ has size $n^{n - 1}$.

\item For any selection $G^1 \in \G^1, G^2 \in \G^2, \ldots, G^n \in \G^n$,
the intersection $\bigcap_{i=1}^n G^i$ is non-empty.
\end{enumerate}
\end{lemma}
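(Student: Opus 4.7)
The plan is to realize the ground set $[n^n]$ as the product $[n]^n$, i.e.\ the set of all $n$-tuples with entries in $[n]$, and then take each partition $\G^i$ to be the partition of this product according to the value of the $i$-th coordinate.

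More precisely, I would fix a bijection between $[n^n]$ and $[n]^n$, and for each $i \in [n]$ and each $j \in [n]$ define
\[
  G^i_j = \{(a_1, a_2, \ldots, a_n) \in [n]^n : a_i = j\},
\]
and set $\G^i = \{G^i_1, G^i_2, \ldots, G^i_n\}$. To verify property 1, I observe that for fixed $i$, the sets $G^i_1, \ldots, G^i_n$ partition $[n]^n$ according to the value of the $i$-th coordinate, and each $G^i_j$ has exactly $n^{n-1}$ elements since the remaining $n-1$ coordinates are free.

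For property 2, I would pick any choice $G^1_{j_1}, G^2_{j_2}, \ldots, G^n_{j_n}$ and observe that the tuple $(j_1, j_2, \ldots, j_n) \in [n]^n$ lies in all of these sets by construction, so their intersection is non-empty (in fact it is exactly this singleton).

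There is no real obstacle here: the proof is essentially a one-line construction once one has the idea to view $[n^n]$ as $[n]^n$ and to use coordinate projections as the partitioning rule. This is the familiar ``axis-aligned slices of a hypercube'' construction, and it is the natural generalization of the example used in the motivating construction above (where affine designs played an analogous role for a different intersection bound).
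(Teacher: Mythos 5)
Your proof is correct and is essentially identical to the paper's own argument: both fix a bijection between $[n^n]$ and $[n]^n$, define $\G^i$ by slicing according to the $i$-th coordinate, and observe that the tuple $(j_1,\ldots,j_n)$ witnesses the non-empty intersection. No issues.
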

\begin{proof}
Since $\left|[n^n]\right| = \left|[n]^n\right|$, let $g: [n]^n \rightarrow [n^n]$ be any bijection between the two sets. For $j \in [n]$, define the set $G^i_j  = \{g(u)\,|\,u \in [n]^n, u_i = j\}$. Define $\G^i = \{G^i_j \,|\,j\in [n]\}$. It is easy to check the following properties:
\begin{enumerate}
\item Given $i$, for any $j \in [n]$, $\left|G^i_j\right| = n^{n-1}$ and $\bigcup_{j=1}^n G^i_j = \left\{g(u)\,|\,u \in [n]^n \right\} = [n^n]$.
\item For a selection $G^1_{j_1} \in \G^1, G^2_{j_2} \in \G^2, \ldots, G^n_{j_n} \in \G^n$, consider $u = (j_1, j_2, \ldots, j_n)$. Then according to the definition, $g(u) \in \bigcap_{i=1}^n G^i_{j_i}$, so the intersection of these sets is non-empty.
\end{enumerate}
This concludes the proof. \myqed
\end{proof}

\begin{lemma} \label{lemma:coveringCover}
Let $n \in \mathbb{Z}_{++}$. Consider a collection $\G^1, \ldots, \G^n$ satisfying the properties of Lemma \ref{lemma:planesPart}, and consider $\bar{\G}^i \subseteq \G^i$ for $i = 1, \ldots, n$. If the sets in $\bigcup_{i = 1}^n \bar{\G}^i$ cover the whole of $[n^n]$, then there is $i \in [n]$ such that $\bar{\G}^i =
\G^i$.
\end{lemma}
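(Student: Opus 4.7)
My plan is to prove the lemma by contrapositive: I assume that $\bar{\G}^i \subsetneq \G^i$ for every $i \in [n]$, and I exhibit an element of $[n^n]$ that is not covered by $\bigcup_{i=1}^n \bar{\G}^i$.

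The key observation I want to exploit is that each $\G^i$ is a \emph{partition} of $[n^n]$ (property 1 of Lemma~\ref{lemma:planesPart}), so every point of $[n^n]$ lies in exactly one member of $\G^i$. Hence, to guarantee that some point is not covered by $\bar{\G}^i$, it suffices to choose the unique set of $\G^i$ containing that point from outside $\bar{\G}^i$.

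Concretely, the plan is as follows. Under the contrapositive assumption, for each $i$ I can pick some $j_i \in [n]$ such that $G^i_{j_i} \in \G^i \setminus \bar{\G}^i$. Now apply property 2 of Lemma~\ref{lemma:planesPart} to the selection $G^1_{j_1}, G^2_{j_2}, \ldots, G^n_{j_n}$ to obtain a point $x \in \bigcap_{i=1}^n G^i_{j_i}$. Since each $\G^i$ is a partition and $x \in G^i_{j_i}$, the set $G^i_{j_i}$ is the \emph{only} member of $\G^i$ that contains $x$; because $G^i_{j_i} \notin \bar{\G}^i$, no set in $\bar{\G}^i$ contains $x$. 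As this holds for every $i$, the point $x$ is uncovered by $\bigcup_{i=1}^n \bar{\G}^i$, contradicting the covering hypothesis. I do not anticipate any real obstacles — once the two properties of Lemma~\ref{lemma:planesPart} are combined in this way, the argument is essentially immediate.
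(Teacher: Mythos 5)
Your proof is correct and follows essentially the same route as the paper's: the paper also argues by contradiction, selecting one omitted set $G^i_{j_i} \in \G^i \setminus \bar{\G}^i$ from each family, invoking property 2 of Lemma~\ref{lemma:planesPart} to find a common point, and using the partition property to conclude that this point is uncovered. Your write-up is, if anything, slightly more explicit about why the partition property forces the point to be missed by each $\bar{\G}^i$.
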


\begin{proof}
By contradiction, suppose there is $G^1_{j_1} \in \G^1 \setminus
\bar{\G}^1, \ldots, G^n_{j_n} \in \G^n \setminus \bar{\G}^n$. Then
part 2 of Lemma \ref{lemma:planesPart}, there exists an element $u \in \bigcup_{i=1}^n G^i_{j_i}$, and since the sets in $\G^i$ partition
$[n^n]$ this means that $u$ is not covered by sets in $\bar{\G}^i$,
for all $i$; then $\bigcup_{i =1}^n \bar{\G}^i$ does not cover
$[n^n]$, a contradiction.\myqed
\end{proof}

	Now pick a prime number $n \ge \max\{K, 2\}$. We will construct an instance with variables $x_1, \ldots, x_{n^2}$ and $y_1, \ldots, y_K$, and each row-block will have $n^n$ constraints. Let $\G^1, \ldots, \G^n$ be a collection satisfying the properties from Lemma \ref{lemma:planesPart}, and let $\F_1, \ldots, \F_n$ be an affine $n$-design (we remind the readers -- each $\F_i$ partitions $[n^2]$ with $n$-subsets, and for $A \in \F_i, B \in \F_j$ with $i \neq j$, $|A \cap B| \le 1$); this affine design will be used to ``permute'' the indices of the $x$ variables from one set of covering constraints to the next (see example above). We consider the explicit enumeration $\F_i = \{F_i^1, \ldots, F_i^n\}$.
	
	For $k \in [K]$, we define the set
	\begin{align*}
		P_k = \left\{(x,y) \in \{0,1\}^{n^2 + K} \bigmid \sum_{i = 1}^n \sum_{j \in F_i^k} A^k_j x_j + \ones \cdot y_k \ge \ones\right\},
	\end{align*}
	where the set of vectors $\{A^k_j\}_{j \in F^k_i}$ is equal to the set of vectors $\{\chi_{G^i_{j'}}\}_{j' \in [n]}$; we note that it is not important which $A^k_j$ is assigned to which $\chi_{G^i_{j'}}$. 
	
	Then the covering integer program we consider is the following:
	\begin{align*}
		\min ~~& \sum_{j=1}^{n^2} x_j + n^n\cdot \sum_{k = 1}^K y_k\\
		s.t. ~~& \sum_{i = 1}^n \sum_{j \in F_i^k} A^k_j x_j + \ones \cdot y_k \ge \ones ~~~~~\forall k \in [K]\\
					&(x,y) \in \{0,1\}^{n^2 + K}. 
	\end{align*}
	(or equivalently $(x,y) \in \bigcap_{k \in [K]} P_k$). Let $P$ denote the set of solutions for this problem.
	
	Notice that this program has $K$ sets of $n^n$ covering constraints. We consider the partition of the covering constraints $\I = \{I_1, \ldots, I_K\}$ where $I_k = \{(k-1) n^n + 1, \ldots, kn^n\}$ (so $I_k$ corresponds to $P_k$). It is easy to see that the covering interaction graph $\Gc$ is a clique. 
	
	For $\V = \{\{v_1\}, \dots, \{v_K\}\}$, remember that $z^{*} = z^{\V, C}$. Therefore, we want to show that $\frac{z^I}{z^{\V, C}} \ge K - \epsilon$ (for sufficiently large $n$).
	We start by analyzing each $P_k$.
	
	\begin{lemma} \label{lemma:charPk}
		A vector $(\bar{x}, 0) \in \{0,1\}^{n^2} \times \{0,1\}^{K}$ belongs to $P_k$ if and only if there is $F_i^k$ such that $\bar{x}_j = 1$ for all $j\in F_i^k$.
	\end{lemma}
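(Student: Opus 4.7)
The plan is to translate the covering constraint defining $P_k$ into a set-cover statement over the ground set $[n^n]$, and then invoke Lemma \ref{lemma:coveringCover}. Since $y=0$, membership of $(\bar{x},0)$ in $P_k$ is equivalent to $\sum_{i=1}^n \sum_{j \in F_i^k} A^k_j \bar{x}_j \ge \ones$. The crucial structural fact is that for each fixed $i$, the multiset $\{A^k_j\}_{j \in F_i^k}$ equals $\{\chi_{G^i_{j'}}\}_{j' \in [n]}$, and since the sets $G^i_{j'}$ form the partition $\G^i$ of $[n^n]$, these indicator vectors are pairwise distinct (for $n \ge 2$). Hence the assignment $j \mapsto A^k_j$ yields a bijection from $F_i^k$ to $\G^i$, and specifying $\bar{x}|_{F_i^k}$ is equivalent to specifying a subcollection $\bar{\G}^i \subseteq \G^i$ (consisting of those $G^i_{j'}$ for which the corresponding variable is set to $1$).

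For the ``if'' direction, I would argue that if there is some $i$ with $\bar{x}_j = 1$ for every $j \in F_i^k$, then
\[
\sum_{j \in F_i^k} A^k_j \bar{x}_j \;=\; \sum_{j' \in [n]} \chi_{G^i_{j'}} \;=\; \ones,
\]
where the last equality uses that $\G^i$ partitions $[n^n]$. Non-negativity of all remaining summands then gives $\sum_{i'=1}^n \sum_{j \in F_{i'}^k} A^k_j \bar{x}_j \ge \ones$, so $(\bar{x},0) \in P_k$.

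For the ``only if'' direction, suppose $(\bar{x},0) \in P_k$, and for each $i \in [n]$ let $\bar{\G}^i \subseteq \G^i$ denote the subcollection corresponding to the indices $j \in F_i^k$ with $\bar{x}_j = 1$, as described above. Then the covering inequality $\sum_{i=1}^n \sum_{j \in F_i^k} A^k_j \bar{x}_j \ge \ones$ says precisely that the union $\bigcup_{i=1}^n \bar{\G}^i$ covers all of $[n^n]$. Applying Lemma \ref{lemma:coveringCover} yields some $i$ with $\bar{\G}^i = \G^i$, and unwinding the bijection between $F_i^k$ and $\G^i$ this forces $\bar{x}_j = 1$ for all $j \in F_i^k$, as required.

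The only subtle point I anticipate is making the bijection between $F_i^k$ and the indicator vectors $\{\chi_{G^i_{j'}}\}_{j' \in [n]}$ fully rigorous (and noting that the specific pairing is irrelevant because the argument is symmetric in the choice of assignment); everything else follows directly from Lemma \ref{lemma:coveringCover} and the partition property of $\G^i$.
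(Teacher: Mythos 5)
Your proposal is correct and follows essentially the same route as the paper's proof: both directions reduce the covering constraint (with $y_k=0$) to a set-cover statement over $[n^n]$ via the identification of $\{A^k_j\}_{j\in F_i^k}$ with $\{\chi_{G^i_{j'}}\}_{j'\in[n]}$, using the partition property of $\G^i$ for the ``if'' direction and Lemma~\ref{lemma:coveringCover} for the ``only if'' direction. Your extra care about the bijection being well defined is a minor refinement the paper leaves implicit.
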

	
	\begin{proof}
		\noindent $(\Rightarrow)$ 
Let $\bar{\G}^i$ be the subset of the sets in $\G^i$ picked by
$\bar{x}$, namely $G^i_t$ belongs to $\bar{\G}^i$ iff
$\chi_{G^i_{j'}} = A^k_j$ for some $j$ with $\bar{x}_j = 1$. Since $\bar{y} = 0$, the fact that $(\bar{x}, \bar{y})$ belongs to $P_k$ implies that the sets in $\bigcup_{i = 1}^n \bar{\G}^i$ must cover
the whole of $[n^n]$. Lemma \ref{lemma:coveringCover} then
implies that there is one $\bar{\G}^i$ that equals $\G^i$, which
translates to having $\bar{x}_j = 1$ for all $j \in F^k_i$. 

		\smallskip \noindent $(\Leftarrow)$ This follows from the fact that the sets in $\G^i$ cover the whole of $[n^n]$.\myqed
	\end{proof}
	
	We can use this to lower bound the optimal value $z^I$ of the covering program $P$.

	\begin{lemma} \label{lemma:LBcovIP}
			$z^I \ge Kn - K^2$.
	\end{lemma}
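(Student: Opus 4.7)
The plan is to show that any feasible integer solution $(\bar x, \bar y) \in P$ of the constructed program has objective value at least $Kn - K^2$, by splitting into two cases according to whether any of the $y$ variables is set to one. The first case is the easy one that just uses the prohibitive cost of the $y$ variables, and the second is the combinatorial heart of the argument, which uses Lemma \ref{lemma:charPk} together with the almost-disjointness property of the affine design $\F_1, \ldots, \F_n$.

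First I would observe that since the coefficient of each $y_k$ in the objective is $n^n$, any feasible solution with some $\bar y_k \ge 1$ already has objective at least $n^n$. Using $n \ge \max\{K, 2\}$, one has $n^n \ge n^2 \ge Kn \ge Kn - K^2$, so such solutions satisfy the claimed bound trivially.

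The substantive case is $\bar y = \mathbf{0}$. For each $k \in [K]$, feasibility means $(\bar x, \mathbf{0}) \in P_k$, so by Lemma \ref{lemma:charPk} there exists an index $i_k \in [n]$ such that $\bar x_j = 1$ for every $j \in F_{i_k}^k$. Consequently the support of $\bar x$ contains the union $\bigcup_{k=1}^K F_{i_k}^k$, and since each $x_j \in \{0,1\}$,
$$\sum_{j=1}^{n^2} \bar x_j \;\ge\; \left|\bigcup_{k=1}^K F_{i_k}^k\right|.$$
Now apply inclusion-exclusion, using $|F_{i_k}^k| = n$ for all $k$ and the affine-design property $|F_{i_k}^k \cap F_{i_{k'}}^{k'}| \le 1$ whenever $k \neq k'$:
$$\left|\bigcup_{k=1}^K F_{i_k}^k\right| \;\ge\; \sum_{k=1}^K |F_{i_k}^k| - \sum_{1 \le k < k' \le K} |F_{i_k}^k \cap F_{i_{k'}}^{k'}| \;\ge\; Kn - \binom{K}{2} \;\ge\; Kn - K^2.$$
Combining the two cases yields $z^I \ge Kn - K^2$, completing the proof.

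The main (and only mild) obstacle is handling the case where some $y_k = 1$; this is dispatched by the crude bound $n^n \ge Kn$ which holds because we chose $n \ge \max\{K, 2\}$. The rest is a direct application of Lemma \ref{lemma:charPk} and the defining intersection property of the affine design.
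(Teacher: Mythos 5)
Your proof is correct and follows essentially the same route as the paper's: split on whether $\bar y = \mathbf{0}$, dispatch the nonzero-$y$ case via the $n^n$ objective coefficient, and in the zero case apply Lemma~\ref{lemma:charPk} to force the support of $\bar x$ to contain $\bigcup_k F^k_{i_k}$, then bound its size from below by inclusion--exclusion and the affine-design intersection property. The only cosmetic difference is that you subtract $\binom{K}{2}$ over unordered pairs where the paper subtracts $K(K-1)$ over ordered pairs; both are at most $K^2$, so the conclusion is identical.
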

	
	\begin{proof}
		First, we claim that if $(\bar{x}, 0) \in P$, then $\sum_{j \in [n^2]} \bar{x}_j \ge K n - K^2$. Let $S \subseteq [n^2]$ be the support of $\bar{x}$, so it is equivalent to show $|S| \ge n K - K^2$. Since $(\bar{x}, 0) \in P = \bigcap_{k=1}^K P_k$, using Lemma \ref{lemma:charPk} we have that for every
$k \in [K]$ there is $i(k)$ such that $S$ contains $F^k_{i(k)}$, so
$S \supseteq \bigcup_{k=1}^K F^k_{i(k)}$. By the inclusion-exclusion
principle, we have that $|S| \ge \left|\bigcup_{k=1}^K
F^k_{i(k)}\right| \ge \sum_{k =1}^K |F^k_{i(k)}| - \sum_{k\neq k'}
|F^k_{i(k)} \cap F^{k'}_{i(k')}|$. Using the definition of an
affine $n$-design, get the lower bound $|S| \ge nK - K(K - 1) \geq nK - K^2$.

	Now consider any solution $(\bar{x}, \bar{y}) \in P$. If $\bar{y} = 0$, we have just shown that this solution has value at least $Kn - K^2$; if $\bar{y} \neq 0$, this solution has value at least $n^n > Kn - K^2$. This concludes the proof. \myqed
	\end{proof}

	Finally we upper bound the optimal value of the $z^{\V, C}$.

	\begin{lemma} \label{lemma:UBcovSS}
		$z^{\V, C} \le n$.
	\end{lemma}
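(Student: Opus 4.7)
The plan is to exhibit an explicit fractional point lying in $P^{\V,C}$ whose objective value is exactly $n$. The natural candidate is $\bar{x} = \frac{1}{n}\ones \in \R^{n^2}$ and $\bar{y}=0 \in \R^K$, which has cost $\sum_j \bar{x}_j + n^n \sum_k \bar{y}_k = n$. Since $\V = \{\{v_1\},\ldots,\{v_K\}\}$, we have $P^{\V,C} = \bigcap_{k \in [K]} P^{(\{v_k\})}$, and the indices of $\usupp(\{v_k\})$ are exactly $\{1,\ldots,n^2\} \cup \{n^2+k\}$ (all $x$-variables plus $y_k$), because each $\F_k$ partitions $[n^2]$.

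By Observation~\ref{prop:fund}, it suffices to check for every $k \in [K]$ that (i) $(\bar{x},\bar{y}) \in P^{LP}$, and (ii) the restriction $(\bar{x},\bar{y}_k)$ belongs to the projection $P^I|_{\usupp(\{v_k\})}$. For (i), fix a row of block $k$ and compute $\sum_i \sum_{j\in F_i^k} A_j^k \cdot \frac{1}{n}$. Since the vectors $\{A_j^k : j \in F_i^k\}$ are exactly $\{\chi_{G^i_{j'}} : j' \in [n]\}$ and $\G^i$ partitions $[n^n]$, the inner sum equals $\ones$, and summing over $i \in [n]$ yields $n\ones$; scaling by $1/n$ gives $\ones$, matching the right-hand side. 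Thus every covering constraint is tight in the LP.

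For (ii), I use Lemma~\ref{lemma:charPk}, which tells us that for each $i \in [n]$ the integer point with $x = \chi_{F_i^k}$ and $y_k = 0$ is feasible for $P_k$. Because the $y$-variables with $k' \neq k$ can be freely set to $1$ in a covering-type instance to ensure the remaining blocks $P_{k'}$ are satisfied at cost that is irrelevant for the projection, each $(\chi_{F_i^k}, 0)$ lifts to a point of $P^I$ and therefore lies in the desired projection. Averaging these $n$ integer points with uniform weights $1/n$ and using the fact that $\F_k = \{F_1^k,\ldots,F_n^k\}$ partitions $[n^2]$ gives
\[
\sum_{i=1}^n \tfrac{1}{n}(\chi_{F_i^k},0) = \bigl(\tfrac{1}{n}\ones, 0\bigr),
\]
so $(\bar{x},\bar{y}_k)$ indeed belongs to $P^I|_{\usupp(\{v_k\})}$.

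Together (i) and (ii) show $(\bar{x},\bar{y}) \in P^{(\{v_k\})}$ for every $k$, hence $(\bar{x},\bar{y}) \in P^{\V,C}$, so $z^{\V,C} \le n$. There is no real obstacle here; the whole argument is just two partition computations. The design of the instance (the $A_j^k$ vectors coming from $\G^i$ and the index permutations given by $\F_k$) was engineered precisely so that the ``diagonal'' fractional point $\frac{1}{n}\ones$ satisfies all blocks simultaneously at the LP level while the integer hull is much more constrained (as captured in Lemma~\ref{lemma:LBcovIP}), producing the desired gap when combined with that lemma.
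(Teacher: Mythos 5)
Your proposal is correct and follows essentially the same route as the paper: exhibit the point $(\frac{1}{n}\ones,0)$, verify LP feasibility, and show $(\frac{1}{n}\ones,0)|_{(x,y_k)} \in P^I|_{(x,y_k)}$ by averaging the $n$ integer points $(\chi_{F^k_i},0)$, each lifted to $P^I$ by setting $y_{k'}=1$ for $k'\neq k$. The only differences are cosmetic --- you spell out the LP feasibility computation that the paper leaves as ``easy to check,'' and your parenthetical claim that $\usupp(\{v_k\})$ is all of $\{1,\ldots,n^2\}\cup\{n^2+k\}$ rests on the wrong partition property (block $k$ uses $\bigcup_i F^k_i$, not $\bigcup_j F^j_k$), but this is harmless since proving membership in the closure on the full set $(x,y_k)$ implies membership in the closure on any subset of it.
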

	
	\begin{proof}
		It suffices to show that the point $(\bar{x},\bar{y}) = \left(\frac{1}{n} \ones, 0\right)\in P^{\V, C}$. Recall that $P^{\V, C} = \bigcap_{k \in [K]} P^{(\{v_k\})}$, so we show $(\bar{x},\bar{y})$ belongs to all $P^{(\{v_k\})}$'s.
		Note that $(\bar{x},\bar{y})$ satisfies the linear programming relaxation; therefore, using Observation \ref{prop:fund}, to show that $(\bar{x}, \bar{y})$ belongs to $P^{(\{v_k\})}$ it suffices to prove that $(\bar{x}, \bar{y}_k) \in P^I|_{(x,y_k)}$, where we use $P^I|_{(x,y_k)}$ to denote the projection onto the variables $(x,y_k)$.
		
		Consider the following points $(x^u, y)$, for $u \in [n]$, constructed as:
\begin{eqnarray*}
y_k &=& 0, \\
y_{k'} &=& 1 \ \forall \ k' \in [K] \setminus \{k\},\\
x^u_j &=& \left \{ \begin{array}{cc} 1 & \textup{ if } j  \in F^k_u\\ 
0 & \textup{ otherwise.}
\end{array} 
\right. 
\end{eqnarray*}
It is straightforward to verify that $(x^u, y) \in P$ for $u \in [n]$. Thus, the average $ \frac{1}{n} \sum_{u \in [n]} (x^u, y) $ belongs to $P^I$. It then follows that $(\bar{x}, \bar{y}_k) = (\frac{1}{n} \ones, 0)|_{(x,y_k)}$ belongs to $P^I|_{(x,y_k)}$. This concludes the proof.\myqed
	\end{proof}
 
 Putting Lemmas \ref{lemma:LBcovIP} and \ref{lemma:UBcovSS} together we get $\frac{z^I}{z^{S.S.}} \ge \frac{Kn - K^2}{n} = K - \frac{K^2}{n}$. For large enough $n$, we get $\frac{z^I}{z^{\V, C}} 
 \ge K - \epsilon$. This concludes the proof of Theorem \ref{thm:stoccovertight}.


\subsection{Proof for packing-type problem with arbitrary $A$ matrix} \label{sec:proofpackinggen}


\subsubsection{Proof of Theorem \ref{thm:supersparseNS}}
	
	In this section, we use the same notation as that used in Section \ref{sec:packingMain}. So, let $\J$ be a partition of the index set of columns of $A$ (that is [n]). Let $V = \{v_1, \ldots, v_q\}$ be the vertices of $\Gp$ (based on Definition \ref{defn:packgraph}).
Let $\tilde{\V} = \{V^{u_1}, V^{u_2}, \dots, V^{u_k}\} \subseteq \mathcal{V}$ be the subset of sparse cut support list corresponding to the definition of corrected average density $D_{\V}$ (see Definition \ref{defn:corraverden}), so $V = \bigcup_{i = 1}^k V^{u_i}$ and $\frac{1}{k}\sum_{i = 1}^k |V^{u_i}| = D_{\V}$.

	Recall from Section \ref{sec:packingMain} a couple of definitions: first, the function $\phi$ maps subsets of vertices of $V$ to the corresponding variable indices, namely if $S = \{v_i\}_{i \in I}$ then $\phi(S) = \bigcup_{i \in I} J_i$. 
	
	
	For the purpose of this section, let $x^{(S)} = \argmax\left\{ (c|_{\phi(S)})^T (x|_{\phi(S)}) \mid x \in P^I\right\}$. (Notice that this is different from the definition used in Section \ref{sec:packingMain}.)
	
		\begin{lemma} \label{lemma:genPacking}
		For any set $\tilde{V} \in \tilde{\V}$ we have $z^{\mathcal{V}, P} \leq c^T x^{(\tilde{V})} + \sum_{v \in V \setminus \tilde{V}} c^T x^{(v)}$.
	\end{lemma}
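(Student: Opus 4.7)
The plan is to take an optimal solution $x^*$ of the column block-sparse closure $P^{\V,P}$, decompose its objective value according to the vertex partition induced by $\mathcal{J}$, and then bound each block's contribution separately, treating $\tilde{V}$ as a single group and each $v \in V \setminus \tilde{V}$ as an individual block. Concretely, I will start from the identity
\[
z^{\V,P} \;=\; c^T x^* \;=\; (c|_{\phi(\tilde V)})^T(x^*|_{\phi(\tilde V)}) \;+\; \sum_{v \in V \setminus \tilde V} (c|_{\phi(v)})^T(x^*|_{\phi(v)}),
\]
which just regroups coordinates of $V$ into $\tilde V$ and its complement.

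To bound the first summand, I use that $\tilde V \in \tilde \V \subseteq \V$, so $x^* \in P^{(\tilde V)}$, and Observation~\ref{prop:fund} gives $x^*|_{\phi(\tilde V)} \in P^I|_{\phi(\tilde V)}$. By the definition of $x^{(\tilde V)}$ as a maximizer of $(c|_{\phi(\tilde V)})^T (x|_{\phi(\tilde V)})$ over $P^I$, and since this optimum carries over to the projection, I get $(c|_{\phi(\tilde V)})^T(x^*|_{\phi(\tilde V)}) \le (c|_{\phi(\tilde V)})^T(x^{(\tilde V)}|_{\phi(\tilde V)})$, which is at most $c^T x^{(\tilde V)}$ since $c \ge 0$ and $x^{(\tilde V)} \ge 0$ (so extra non-negative blocks can only add to the total).

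For each $v \in V \setminus \tilde V$, I use that the sets in $\tilde \V$ cover $V$, so there exists some $V^{u_j} \in \tilde \V \subseteq \V$ with $v \in V^{u_j}$; hence $x^* \in P^{(V^{u_j})}$ forces $x^*|_{\phi(V^{u_j})} \in P^I|_{\phi(V^{u_j})}$, and projecting further onto $\phi(v) \subseteq \phi(V^{u_j})$ yields $x^*|_{\phi(v)} \in P^I|_{\phi(v)}$. Optimality of $x^{(v)}$ and non-negativity of $c,x^{(v)}$ then give $(c|_{\phi(v)})^T(x^*|_{\phi(v)}) \le (c|_{\phi(v)})^T(x^{(v)}|_{\phi(v)}) \le c^T x^{(v)}$. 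Summing over $v \in V \setminus \tilde V$ and combining with the $\tilde V$-bound yields the lemma.

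The only real subtlety I anticipate is keeping the projection/optimality arguments clean: one must not confuse $x^{(S)}$ (a point in $P^I \subseteq \R^n$ that maximizes the $\phi(S)$-restricted objective) with its projection $x^{(S)}|_{\phi(S)}$, and one must invoke non-negativity of $c$ and of the variables at the right moments to pass from restricted to full objective values. Aside from that, the proof is a straightforward decomposition-and-bound argument, relying only on Observation~\ref{prop:fund} and the definitions.
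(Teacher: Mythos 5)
Your proposal is correct and follows essentially the same route as the paper's proof: the same decomposition of $c^T x^*$ into the $\tilde{V}$-block plus singleton blocks, the same use of Observation~\ref{prop:fund} together with the covering property of $\tilde{\V}$ to place each restriction of $x^*$ into the corresponding projection of $P^I$, and the same appeal to optimality of $x^{(S)}$ and non-negativity of $c$ and $x^{(S)}$ to pass to the full objective value. No gaps to report.
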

	
	\begin{proof}
 Fix any $\tilde{V} \in \tilde{\V}$ and $S \subseteq \tilde{V}$. Let $x^* = \argmax \{ c^T x \,|\, x \in P^{\mathcal{V}, P}\}$ be an optimal solution corresponding to the optimization over $P^{\mathcal{V}, P}$. Since $P^{\V,C} = \bigcap_{\tilde{V}' \in \tilde{\V}} P^{(\tilde{V}')}$, we have that $x^* \in P^{(S)} \supseteq P^{(\tilde{V}')}$. From Observation \ref{prop:fund} we then get $x^*|_{\phi(S)} \in P^I|_{\phi(S)}$. 
 
Thus we get $(c|_{\phi(S)})^T (x^*|_{\phi(S)}) \le (c|_{\phi(S)})^T (x^{(S)}|_{\phi(S)}) \leq c^T x^{(S)}$, where the first inequality follows from optimality of $x^{(S)}$ and the second inequality follows from non-negativity of $c$ and $x^{(S)}$. 

In particular, since $\tilde{\V}$ covers $V$, we can apply this to the any singleton $S = \{v\}$ and get $(c|_{\phi(v)})^T (x^*|_{\phi(v)}) \le c^T x^{(v)}$.
 
 Applying this bound, we obtain that for any $\tilde{V} \in \V$
	\begin{align*}
		z^{\mathcal{V}, P} &= c^T x^* = (c|_{\phi(\tilde{V})})^T (x^*|_{\phi(\tilde{V})}) + \sum_{v \in V \setminus \tilde{V}} (c|_{\phi(v)})^T (x^*|_{\phi(v)}) \le c^T x^{(\tilde{V})} + \sum_{v \in V \setminus \tilde{V}} c^T x^{(v)}.
	\end{align*}
	This concludes the proof. \myqed
	\end{proof}
	
Now we are ready to complete the proof of the theorem. Using Lemma \ref{lemma:genPacking} for all sets in $\tilde{\V}$ and adding up these inequalities we obtain that
	\begin{align}
		k \cdot z^{\mathcal{V}, P} &\leq \sum_{i = 1}^k c^T x^{(V^{u_i})} + \sum_{i = 1}^k \left(\sum_{v \in V \setminus V^{u_i}} c^T x^{(v)} \right) \notag \\
		\label{eq:oneside} &= \sum_{i = 1}^k c^T x^{(V^{u_i})} + \sum_{v \in V} \miss(v) \cdot c^T x^{(v)},
	\end{align}
	where $\miss(v) = |\{i \in [k]\,|\, v \not \in V^{u_i}\}|$, that is the number of sparse-cut types in $\V$ in which the variables corresponding to vertex $v$ do not appear.
	
	Moreover it follows from the definition of $x^{(S)}$ that $x^{S} \in P^I$ and therefore we have that $z^I \ge c^T x^{(S)}$ for every subset $S \subseteq V$. Thus, we obtain that
	\begin{align*}
		z^I &\geq \max \left\{ \max_{i \in [k]} \{ c^T x^{(V^{u_i})} \} ~,~ \max_{v \in V} \{ c^T x^{(v)} \} \right\} \\
		&\geq \frac{1}{k + \sum_{v \in V} \miss(v)} \left(\sum_{i = 1}^k c^T x^{(V^{u_i})} + \sum_{v \in V} \miss(v) \cdot c^T x^{(v)} \right) \\
		&\geq \frac{k}{k + \sum_{v \in V} \miss(v)} \cdot z^{\mathcal{V}, P}.
	\end{align*}
where the second inequality follows from taking a weighted average, the third inequality follows from~\eqref{eq:oneside}. Finally, the next lemma shows that $k + \sum_v \miss(v) = k + kq - kD_\V$, concluding the proof of the theorem. 

	\begin{lemma}
		$kD_{\mathcal{V}} + \sum_{v \in V} \miss(v) = kq$.
	\end{lemma}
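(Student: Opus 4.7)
The plan is a simple double-counting argument on the pairs $(i,v)$ with $i \in [k]$ and $v \in V \setminus V^{u_i}$. First, I would recall the two defining identities we can leverage: by the definition of the corrected average density applied to $\tilde{\V}$, we have $\sum_{i=1}^k |V^{u_i}| = k D_{\V}$; and by the definition of $\miss(v)$, namely $\miss(v) = |\{i \in [k] : v \notin V^{u_i}\}|$, we have the complementary count $\sum_{i=1}^k |V \setminus V^{u_i}| = \sum_{v \in V} \miss(v)$ (each pair $(i,v)$ with $v \notin V^{u_i}$ is counted once on each side).

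Next I would combine these. Since each $V^{u_i} \subseteq V$ and $|V| = q$, we have $|V^{u_i}| = q - |V \setminus V^{u_i}|$ for each $i$, so summing over $i \in [k]$ gives
\begin{equation*}
k D_{\V} \;=\; \sum_{i=1}^k |V^{u_i}| \;=\; kq - \sum_{i=1}^k |V \setminus V^{u_i}| \;=\; kq - \sum_{v \in V} \miss(v).
\end{equation*}
Rearranging yields $k D_{\V} + \sum_{v \in V} \miss(v) = kq$, as desired.

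There is no real obstacle here; the only point that requires a moment of care is the double-counting step linking $\sum_i |V \setminus V^{u_i}|$ to $\sum_v \miss(v)$, which is immediate once one writes both quantities as the cardinality of $\{(i,v) \in [k] \times V : v \notin V^{u_i}\}$. Everything else is bookkeeping from the definitions already established in the section.
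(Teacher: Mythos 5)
Your proof is correct and follows essentially the same double-counting argument as the paper, which phrases it via the $k \times q$ incidence matrix of the pairs $(i,v)$ with $v \in V^{u_i}$ rather than via set complements; the two formulations are interchangeable.
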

	
	\begin{proof}
		We perform a simple double counting. Consider the $V/\{V^i\}_i$ incidence matrix $B \in \{0,1\}^{k \times q}$ defined as $B_{i,v} = 1$ if $v \in V^i$ and $B_{i,v} = 0$ if $v \notin V^i$. Using the definition of $D_{\mathcal{V}}$ we have:
	\begin{eqnarray}
		\label{eq:nonzero} k D_{\mathcal{V}} = \left| \{ (i,v) \in [k]\times V\,|\, B_{i,v} \neq 0\}\right|.
	\end{eqnarray} 
On the other hand, from the definition of $\miss(v)$ we have that
	\begin{align}
		\sum_{v \in V} \miss(v) &= \sum_{v \in V} |\{i \in [k]\,|\, v \not \in V^i\}| = \left| \{ (i,v) \in [k]\times V\,|\, B_{i,v} = 0\}\right|. \label{eq:zero}
	\end{align}
By (\ref{eq:nonzero}) and (\ref{eq:zero}), we have that $kD_{\mathcal{V}} + \sum_{v \in V} \miss(v) = kq$. This concludes the proof.\myqed
	\end{proof}

\subsubsection{Proof of Theorem~\ref{thm:SSarbitAstartight}}

	We consider the following integer program with $2 K  - 1$ variables:
\begin{align}
\textup{max}~~& x_{K} + \sum_{j = K + 1}^{2K - 1} x_{j} \notag\\
s.t.~~&  \sum_{i = 1}^{K} x_i = 1 \label{eq:firstIPcons}\\
		  & x_i + x_j \leq 2 - \epsilon  ~~~~~~\forall i \in \{1, \dots, K - 1\}, ~~~\forall j \in \{K + 1, \dots, 2K-1\}\setminus \{K + i\} \label{eq:othercons}\\
			& x_{K} + x_j \leq 2 - \epsilon  ~~~~~\forall j \in \{K + 1, \dots, 2K -1\} \label{eq:lastIPcons}\\
			& x \in \{0,1\}^{2K - 1}.\notag
\end{align} 
 (We assume $\epsilon < \frac{K-1}{K}$.) Let $P$ denote the integer set relative to this problem.
 
 We consider the partition $\J = \{J_1, \ldots, J_{K}\}$ of the columns given by $J_1 = \{1, \dots, K\}$, $J_i = \{K + i - 1\}$ for $i \in 2, \dots, K$. Notice that the packing interaction graph $\Gp$ for this program is a star on $K$ nodes. Writing explicitly $\Gp = (V,E)$ with $V = \{v_1, \dots, v_{K}\}$ and $$E= \{ (v_1, v_2), (v_1, v_3), (v_1, v_4) , \dots, (v_1, v_{K}) \}.$$ Since we are in the context of the super sparse closure, we have the support list $$\mathcal{V} = \left\{\{v_1\}, \{v_2\}, \dots, \{v_{K}\}\right\}.$$ 

	We show the bound $z^{S.S.} \ge K \cdot z^I - \epsilon$, and start by lower bounding $z^{S.S.}$.
	
	\begin{lemma}
		$z^{S.S.} \geq K  - \epsilon$
	\end{lemma}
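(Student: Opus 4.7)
The plan is to exhibit a single explicit point $\bar{x} \in \mathbb{R}^{2K-1}$ that lies in the super-sparse closure $P^{S.S.}$ and achieves objective value at least $K - \epsilon$. Guided by the twin goals of (a) pushing every second-stage variable $x_j$ for $j > K$ all the way to $1$ (which is allowed, since the singleton projection $P^I|_{\{K+i-1\}}$ will equal $[0,1]$) and (b) saturating the binding linking inequality \eqref{eq:lastIPcons}, I would take
\[
\bar{x}_i = \tfrac{\epsilon}{K-1} \text{ for } i \in [K-1], \quad \bar{x}_K = 1-\epsilon, \quad \bar{x}_j = 1 \text{ for } j \in \{K+1,\ldots,2K-1\}.
\]

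First I would check $\bar{x} \in P^{LP}$: the equality \eqref{eq:firstIPcons} holds since $(K-1)\cdot \tfrac{\epsilon}{K-1} + (1-\epsilon) = 1$, while each inequality in \eqref{eq:othercons}--\eqref{eq:lastIPcons} reduces after substitution to $\epsilon \cdot K/(K-1) \leq 1$, which is exactly the standing assumption $\epsilon < (K-1)/K$.

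Next, to upgrade ``$\bar x \in P^{LP}$'' to ``$\bar x \in P^{S.S.} = \bigcap_{i=1}^K P^{(\phi(v_i))}$'', I would invoke Observation~\ref{prop:fund}, which reduces the task to showing $\bar{x}|_{\phi(v_i)} \in P^I|_{\phi(v_i)}$ for each block. For the big block $\phi(v_1) = J_1$, each basis vector $e_\ell \in \mathbb{R}^K$ is the $J_1$-projection of an integer feasible solution (set $x_\ell = 1$, everything else $0$), and hence $\bar x|_{J_1} = \sum_{\ell=1}^{K-1} \tfrac{\epsilon}{K-1} e_\ell + (1-\epsilon) e_K$ is an explicit convex combination of points in $P^I|_{J_1}$. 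For each singleton block $\phi(v_i) = \{K+i-1\}$ with $i \geq 2$, the projected value is the scalar $1$, which is achieved by the integer feasible solution obtained by setting $x_{i-1} = x_{K+i-1} = 1$ and all other coordinates to $0$ (a quick check against \eqref{eq:firstIPcons}--\eqref{eq:lastIPcons}).

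Finally, the objective evaluates to $c^T \bar x = \bar x_K + \sum_{j > K} \bar x_j = (1-\epsilon) + (K-1) = K - \epsilon$, yielding the claim. The one non-routine aspect of the argument is guessing $\bar{x}$ itself: the constraint projections on the singleton blocks only restrict each $\bar x_j$ to $[0,1]$, so one wants to push them to $1$, which via \eqref{eq:lastIPcons} forces $\bar x_K = 1-\epsilon$; the equality \eqref{eq:firstIPcons} then pins down the remaining $\bar x_i$'s. Once this is written down, the verifications are purely arithmetic.
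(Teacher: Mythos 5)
Your proof is correct and takes essentially the same route as the paper: it exhibits the identical point $\bar{x}$ (with $\bar{x}_i = \epsilon/(K-1)$ for $i\in[K-1]$, $\bar{x}_K = 1-\epsilon$, and $\bar{x}_j=1$ for $j>K$), verifies LP feasibility, and then applies Observation~\ref{prop:fund} block by block. Your write-up is in fact slightly more explicit than the paper's, since you supply the integer witnesses and the explicit convex combination showing $\bar{x}|_{J_1}\in P^I|_{J_1}$ and $1\in P^I|_{J_j}$, which the paper only asserts.
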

	
	\begin{proof}
	We claim that the point $\bar{x}$ given by $\bar{x}_j = \frac{\epsilon}{K - 1}$ for all $j = \{1, \dots, K - 1\}$, $\bar{x}_{K} = 1 - \epsilon$, $\bar{x}_{j} = 1$ for all $j \in \{K + 1, \dots, 2K - 1\}$ belongs to the natural sparse closure $P^{S.S.}$, proving it using Observation~\ref{prop:fund}.
	
	First, it is easy to check that $\bar{x}$ belongs to the LP relaxation $P^{LP}$. Moreover, note that $P^I|_{J_1} = \{(x_1, \dots, x_{K}) \in [0,1]^K \,|\, \sum_{i = 1}^{K} x_i = 1\}$, 
and hence $\bar{x}|_{J_1} \in P^I|_{J_1}$. In addition, $P^I|_{J_j} = [0, \ 1]$ for $j \in \{2, \dots, |V|\}$, and thus $\bar{x}|_{J_j} = \bar{x}_{K + j-1} \in P^I|_{J_j}$. Since $P^{S.S.} = \bigcap_{j = 1}^K P^{(J_j)}$, from Observation \ref{prop:fund} we get that $\bar{x} \in P^{S.S.}$.\myqed
	\end{proof}

	To complete the proof, we show that the optimal value of the IP is (at most) $1$, namely exactly one of the variables $x_{K}, x_{K + 1}, \dots, x_{2K - 1}$ can take a value of $1$ and the others are zero. So consider any feasible solution $\bar{x} \in \{0,1\}^{2K - 1}$. If $\bar{x}_{K}=1$, then the constraints \eqref{eq:lastIPcons} imply that $x_{K +1} = x_{K+2} = \dots = x_{2K - 1} = 0$. On the other hand if $\bar{x}_K = 0$, then by constraint \eqref{eq:firstIPcons} there is some $i \in [K - 1]$ with $\bar{x}_i = 1$, and so constraints \eqref{eq:othercons} imply $\bar{x}_j = 0$ for all $j \in \{|V| + 1, \dots, 2K + 1\} \setminus \{K + i\}$, and so at most $\bar{x}_j$ can take value 0. 
	
	Since $z^I \le 1$ and $z^{S.S.} \ge K - \epsilon$, we get the desired bound $z^{S.S.} \ge K \cdot z^I - \epsilon$, concluding the proof of Theorem \ref{thm:SSarbitAstartight}.


\subsubsection{Proof of Theorem~\ref{thm:arbitANStight}}

We will construct an example with $2K$ variables. To start, for $k \in [K]$ let $P^I_k$ be the convex hull of the points 
	\begin{align*}
		P_k := &\left\{ (x, y) \in \{0, 1\}^{K + K} \bigmid \right. \\
		&\left. y_k = 1 \textrm{ if and only if either $[x_k = 1, x_i = 0 ~\forall i \neq k]$ or $[x_k = 0, x_i = 1 ~\forall i \neq k]$} \right\}
	\end{align*}
	We then consider the integer program
	\begin{align*}
		\max ~~& \sum_{k = 1}^K y_k \\ 
		\textup{s.t.} ~~& (x,y) \in \bigcap_{k \in [K]} P^I_k \cap \{0,1\}^{2K}.
	\end{align*}
	Let $P$ denote the associated integer set. 
The partition of variable indices we consider is $\J = \{J_0, J_1, J_3, \dots, J_K\}$, where $J_0$ corresponds to the variables $x$, and each $J_k$ corresponds to variable $y_k$ for $k \in [K]$. Notice that the packing interaction graph $\Gp$ is a star on $K+1$ nodes; explicitly, $\Gp = (V,E)$ with $V= \{v_0, \ldots, v_K\}$ and $E = \{\{v_0,v_1\}, \ldots, \{v_0, v_K\}\}$. Recall we are in the natural sparse closure setting, so the support list $\V$ in this case equals the edge set $E$.
	
	We show that $z^{N.S.} \ge K \cdot z^I$. For that, we start by lower bounding $z^{N.S.}$. 

	\begin{lemma}
		$z^{N.S.} \geq K$.
	\end{lemma}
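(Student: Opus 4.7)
My plan is to exhibit an explicit point in $P^{N.S.}$ of objective value $K$, namely $(\bar x, \bar y)$ with $\bar x = \tfrac{1}{2}\ones_K$ and $\bar y = \ones_K$. Checking that this point lies in $P^{N.S.} = \bigcap_{k \in [K]} P^{(\{v_0, v_k\})}$ amounts, by Observation~\ref{prop:fund}, to two tasks: (i) verify that $(\bar x, \bar y)$ lies in the LP relaxation $P^{LP} = \bigcap_{k \in [K]} P^I_k$, and (ii) for every $k$, verify that the projection $(\bar x, \bar y_k)$ lies in $P^I|_{(x,y_k)}$.

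For (i), observe that since the defining condition of $P_k$ only constrains the coordinates $(x, y_k)$, the polytope $P^I_k$ factors as $Q_k \times [0,1]^{K-1}$, where $Q_k \subseteq \R^{K+1}$ is the convex hull of the integer vectors $(x, y_k) \in \{0,1\}^{K+1}$ meeting the stated ``iff'' condition. By construction, the points $(e_k, 1)$ and $(\ones - e_k, 1)$ both belong to $Q_k$, and their average is $(\tfrac{1}{2}\ones, 1)$; together with $\bar y_j = 1 \in [0,1]$ for $j \neq k$, this yields $(\bar x, \bar y) \in P^I_k$. This holds for every $k$, giving membership in $P^{LP}$.

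For (ii), fix any $k$ and consider the two integer vectors $(e_k, e_k)$ and $(\ones - e_k, e_k)$ in the ambient space $\{0,1\}^{2K}$. I will verify that both are feasible, i.e.\ lie in $P = \bigcap_j P^I_j \cap \{0,1\}^{2K}$, by checking the defining ``iff'' condition for each $j$: for $j=k$ the first branch (resp.\ second branch) of the condition holds and matches $y_k = 1$, while for $j\neq k$ both branches fail (because $x$ has more than one zero coordinate or more than one one coordinate away from position $j$), matching $y_j = 0$. Projecting these two feasible integer points to the coordinates $(x, y_k)$ gives $(e_k, 1)$ and $(\ones - e_k, 1)$, whose average is $(\tfrac{1}{2}\ones, 1) = (\bar x, \bar y_k)$. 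Hence $(\bar x, \bar y_k) \in P^I|_{(x,y_k)}$.

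Combining (i) and (ii) via Observation~\ref{prop:fund} shows $(\bar x, \bar y) \in P^{N.S.}$, and its objective value is $\sum_{k=1}^K \bar y_k = K$. There is no real obstacle here; the only thing to be careful about is the case-check that the candidate integer solutions $(e_k, e_k)$ and $(\ones - e_k, e_k)$ indeed satisfy the constraints of every $P^I_j$, not just $P^I_k$, which is why the particular form of the ``iff'' (with both $x_k=1$-isolated and $x_k=0$-isolated branches) is essential to the construction.
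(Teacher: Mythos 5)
Your proof is correct (for $K\ge 3$; see the caveat below) and follows the same overall strategy as the paper: exhibit the point $(\tfrac12\ones,\ones)$, reduce via Observation~\ref{prop:fund} to an LP-membership check and a projection check, and obtain the projected point as the average of the two complementary $0/1$ vectors $e^k$ and $\ones-e^k$. The one genuine difference is in how you close the projection step (ii): the paper shows $(\tfrac12\ones,e^k)\in P^I_k$ using points of $P_k$ alone and then appeals to the identity $P^I|_{(x,y_k)}=P^I_k|_{(x,y_k)}$, which it justifies only with a parenthetical remark and which in fact requires an extension argument (an integer point of $P_k$ must be completed to a point feasible for \emph{all} $P_j$ by setting the remaining $y_j$'s appropriately). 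You instead exhibit the two integer points $(e^k,e^k)$ and $(\ones-e^k,e^k)$ and verify directly that they lie in the full feasible set $\bigcap_j P^I_j\cap\{0,1\}^{2K}$, so their average's projection is in $P^I|_{(x,y_k)}$ with no projection identity needed; this is more self-contained and arguably cleaner. One caveat: your case-check that both branches fail for $j\neq k$ uses that $x$ has at least two zero (resp.\ two one) coordinates, which requires $K\ge 3$; for $K=2$ the points $(e^k,e^k)$ are infeasible (condition $j\neq k$ forces $y_j=1$ there). This is not really held against you, since the paper's own concluding claim that at most one $y$-variable can be $1$ also fails for $K=2$, so the construction is implicitly restricted to $K\ge 3$ anyway, but it would be worth stating the assumption explicitly.
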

	
	\begin{proof}
		We show that the solution $(\bar{x}, \bar{y})$ given by $\bar{x} = \frac{1}{2} \ones$ and $\bar{y} = \ones$ belongs to $P^{N.S.}$. 	Following Observation \ref{prop:fund}, to show $(\bar{x},\bar{y}) \in P^{N.S.}$ it suffices to show $(\bar{x},\bar{y}) \in P^{LP}$ and $(\bar{x}, \bar{y}_k) \in P^I|_{(x,y_k)}$ for all $k \in [K]$. Notice that $P^{LP} = \bigcap_{k \in [K]} P^I_k$ and $P^I|_{(x,y_k)} = P^I_k|_{(x,y_k)}$ (the latter uses the fact $P^I_j|_{(x,y_k)} = [0,1]^{K+1}$ for $j \neq k$). Thus it suffices to show $(\bar{x}, \bar{y}) \in P^I_k$ for all $k \in [K]$
	
		 For that, fix $k \in [K]$ and consider the points $(x^{k1}, e^k)$ and $(x^{k2}, e^k)$, where $e^i$ is the $i$th canonical basis vector and 
	\begin{align*}
		&x^{k1}_i = \left\{\begin{array}{rl} 1& \textup{if } i = k,\\
		 0 & \textup{otherwise}
		\end{array}\right.\\
		&x^{k2}_i = \left\{\begin{array}{rl} 0& \textup{if } i = k,\\
		 1 & \textup{otherwise}
		\end{array}\right. .
	\end{align*}
	By definition both these points belong to $P_k$; the average $\frac{1}{2} (x^{k1}, e^k) + \frac{1}{2} (x^{k2}, e^k) = (\frac{1}{2} \ones, e^k)$ also belongs to $P_k$. Moreover, since the constraints defining $P^I_k$ are independent of variable $y_i$ for $i \neq k$, we have that $(\frac{1}{2} \ones, \ones) = (\bar{x}, \bar{y})$ also belongs to $P^I_k$. This concludes the proof. \myqed
	\end{proof}

	Now it is easy to see from the definition of $P_k$ that no feasible solution to the IP can set more than one $y$ variable to 1, and hence the optimal value $z^i$ is at most 1. Together with the previous lemma, this gives the desired bound $z^{N.S.} \ge K \cdot z^I$, thus concluding the proof of the theorem. 
	

\section{Acknowledgements}
We would like to thank Carla Michini for her comments that helped improve the presentation in this paper. Santanu S. Dey and Qianyi Wang acknowledge the support from NSF CMMI Grant 1149400.

\ifmp
	\bibliographystyle{spmpsci}  
\else
	\bibliographystyle{plain}
\fi
\bibliography{test}

\appendix
\section{Upper bound on $z^{\textup{cut}}$}\label{AppendixCutAlgo}
Assume we have the general formulation
\begin{eqnarray*}
\textup{max}&&  c^Tx \notag \\
s.t. &&Ax \leq b \notag \\
&&x \in \mathbb{B}^n,
\end{eqnarray*}
where $A \in \R^{m\times n}$. Recall that we are interested in three
type of problems: packing, covering and packing with arbitrary
constraint matrix. All these categories will be written in the form of the 
formulation above with different restrictions on $A$ and $c$. Let
$N_i = \{j \in [n]\,|\, A_{ij} \neq 0\}$ be the index set of non-zero
entries of $i^{th}$ row of $A$. Let $\mathcal{N} = \{N_1, N_2,\ldots, N_t\}$, denote $P^{\mathcal{N}} = \bigcap_{i = 1}^t P^{(N_i)}$ and $z^{cut} = \textup{max}_{\textup{$x \in P^{\mathcal{N}}$}}~ c^Tx $. 

Our basic strategy is the following: we keep adding cuts on the support of some
$N_i$ and checking whether the LP solution will improve. We stop adding cuts when the objective function value does not change, thus obtaining an upper bound on $z^{cut}$. The formal
algorithm is shown as Algorithm 1.

\begin{algorithm}[h!]\label{alg:1}
\caption{Estimating $z^{cut}$}
\begin{algorithmic}
\STATE \textbf{input:} {$P = \{x | Ax \leq b \}$,$\mathcal{N} = \{N_1,N_2,\ldots, N_t\}$, $z^{old} = -\infty$, $z^{new} = -\infty$, $\epsilon = 10^{-6}$}
\STATE  $i \leftarrow 1$, $count \leftarrow 0$ 
\LOOP
\STATE  Solve $x^* = \textup{argmax}_{x \in P} c^Tx$,$z^{new} = c^Tx^*$\;\\
\IF{$x^*$ is integral}
\STATE $z^{cut} = z^{new}$ 
\STATE Exit Loop.
\ELSIF{$z^{new} - z^{old} > \epsilon$}
\STATE $z^{old} \leftarrow z^{new}$
\STATE Generate a valid cut $\alpha x \leq \beta$ on the support of $N_i$ based on Algorithm 2
\STATE $P \leftarrow P \bigcap \{x| \alpha x \leq \beta\}$
\STATE  $count \leftarrow 0$
\ELSE 
\STATE $z^{old} \leftarrow z^{new}$
\IF{count = t}
\STATE $z^{cut} = z^{new}$
\STATE Exit Loop.
\ELSE
\STATE $i \leftarrow i+1 (mod \ t)$
\STATE $count \leftarrow count+1$
\ENDIF
\ENDIF
\STATE 
\ENDLOOP
\textbf{end loop}
\end{algorithmic}
\end{algorithm}

Once we stop adding cut on some $N_i$, we check whether there is a valid cut on $N_{i+1}$. The index $count$ is the number of groups of supports that adding cuts will not improve the optimal objective function value. Also, as long as adding a cut produces improvement on the objective value, $count$ will be reset as 0. The algorithm terminates when one of the following happens:
\begin{enumerate}
\item An integral feasible solution is found.
\item The parameter $count$ equals to the number of supports $t$.
\end{enumerate}

In the algorithm, we call a routine to generate the cut on $N_i$ that is formally shown as Algorithm 2. 
Assume that $\alpha^T x \leq \beta$ is a valid cut on $N_i$
for some $i$, then $\alpha^T \hat{x}  \leq \beta$ holds for all
$\hat{x}\in P^I$. However, as the formulation of $P^I$ is implicit,
we apply the technique of row generations. Let $X$ be a subset of
all integral points in $P^I$. At the beginning, $X = \emptyset$. And
we generate a valid cut $(\alpha^*, \beta^*)$ for $X$. Then we solve
the following IP
\begin{eqnarray*}
\textup{max} && \alpha^* x -\beta^* \\
s.t. && x \in P^I.
\end{eqnarray*}
If the optimal value is less or equal to 0 then it means the cut is
valid for $P^I$, otherwise let $X = X \bigcup \{x^*\}$, where $x^*$
is the optimal solution. By re-applying this process, we will
either obtain a valid cut or a certificate that no valid cut exists.

\begin{algorithm}[h!]\label{alg:2}
\caption{Cut generation on $N_i$}
\begin{algorithmic}
\STATE \textbf{Input:} {$P = \{x | Ax \leq b \}$, $P^I = conv\_hull\{x| x\in P, x\in Z^n\}$, $x^*$} 
\STATE $X \leftarrow \emptyset$, $\epsilon \leftarrow 10^{-6}$
\LOOP
\STATE Solve $(\alpha^*,\beta^*) = \textup{argmax}_{x^T \alpha \leq \beta,\forall x \in X,\|\alpha\|_1 = 1, \textup{support of } \alpha = N_i} ~x^{*T}\alpha - \beta$
\IF{$x^{*T}\alpha - \beta > \epsilon$}
\STATE Solve $x^0 = \textup{argmax}_{x \in P^I} ~\alpha^* x -\beta^* $
\IF{$\alpha^*x^0 -\beta^* > \epsilon$}
\STATE  $X \leftarrow X\bigcup \{x^0\}$
\ELSE
\STATE Return $(\alpha^*,\beta^*) $
\STATE Exit Loop
\ENDIF
\STATE Return $(\alpha,\beta) = (\vec{0},0)$
\STATE Exit Loop
\ENDIF
\STATE 
\ENDLOOP
\textbf{end loop}
\end{algorithmic}
\end{algorithm}

\end{document}